\numberwithin{equation}{section}
\newcounter{AbcT}
\newtheorem {Theorem}    {Theorem}[section]
\newtheorem* {Theorem1.9}    {Theorem 1.9}
\newtheorem* {Theorem1.6}    {Theorem 1.6}
\newtheorem* {Question1.7}    {Question 1.7}
\newtheorem* {Remark}	{\bf{Remark}}
\newtheorem* {Hypothesis}    {Regularity Hypothesis}
\newtheorem {Lemma}      [Theorem]    {Lemma}
\newtheorem {Corollary}   [Theorem] {Corollary}
\newtheorem {Proposition}[Theorem]    {Proposition}
\newtheorem {Claim}      [Theorem]    {Claim}
\newtheorem {Observation}[Theorem]    {Observation}
\theoremstyle{remark}
\theoremstyle{definition}
\newtheorem {Definition}[Theorem] {Definition} %[section]
\newcounter{DM@bibnum}
\newcommand {\F} {{\mathcal F}}
\newcommand {\K} {K}
\newcommand{\la}{\langle}
\newcommand{\ra}{\rangle}
\def\IA{{\rm IA}}
\def\GL{{\rm GL}}
\def\Sp{{\rm Sp}}
\def\supp{{supp\,}}
\def\Aut{{\rm Aut}}
\def\Inn{{\rm Inn}}
\def\Out{{\rm Out}}
\def\Hom{{\rm Hom}}
\def\Mod{{\rm Mod}}
\def\Ker{{\rm Ker\,}}
\def\Im{{\rm Im\,}}
\def\NSL_2{{\mathcal N SL_2}}
\def\eps{\varepsilon}
\def\lam{\lambda}            
\def\phi{\varphi}
\def\calB{{\mathcal B}}
\def\calI{{\mathcal I}}
\def\calK{{\mathcal K}}
\def\calM{{\mathcal M}}
\def\calT{{\mathcal T}}
\def\hbar{\bar h}
\def\dbF{{\mathbb F}}
\def\dbN{{\mathbb N}}
\def\dbQ{{\mathbb Q}}
\def\dbR{{\mathbb R}}
\def\dbS{{\mathbb S}}
\def\dbZ{{\mathbb Z}}
\def\skv{{\vskip .12cm}}
\begin{document}

\title{Effective finite generation for $[\IA_n,\IA_n]$ and the Johnson kernel}

\author{Mikhail Ershov}
\address{University of Virginia}
\email{ershov@virginia.edu}
\author{Daniel Franz}
\address{Jacksonville University}
\email{dfranz1@ju.edu}

\begin{abstract}
Let $\IA_n$ denote the group of $\IA$-automorphisms of a free group of rank $n$, and let $\mathcal I_n^b$ denote the Torelli subgroup of
the mapping class group of an orientable surface of genus $n$ with $b$ boundary components, $b=0,1$. In 1935 Magnus proved that
$\IA_n$ is finitely generated for all $n$, and in 1983 Johnson proved that $\mathcal I_n^b$ is finitely generated for $n\geq 3$.

It was recently shown that for each $k\in\dbN$, the $k^{\rm th}$ terms of the lower central series $\gamma_k \IA_n$ and
$\gamma_k\mathcal I_n^b$  are finitely generated when $n>>k$; however, no information about finite generating sets was known for $k>1$. The main goal of this paper is to construct an explicit finite generating set for $\gamma_2 \IA_n = [\IA_n,\IA_n]$ and almost explicit finite generating sets for $\gamma_2\mathcal I_n^b$ and the Johnson kernel, which contains $\gamma_2\mathcal I_n^b$ as a finite index subgroup.
\end{abstract}

\maketitle
\section{Introduction}

\subsection{Discussion of the problem}
\label{sec:introresults}

Given non-negative integers $n$ and $b$, let $\Sigma_n^b$ be an orientable surface of genus $n$ with $b$ boundary components, and let $\Mod_n^b=\Mod(\Sigma_n^b)$ be its mapping class group. The corresponding Torelli group $\calI_n^b$
is the subgroup of $\Mod_n^b$ consisting of elements acting trivially
on $H_1(\Sigma_n^b,\dbZ)$. In this paper we will only consider the cases $b=0,1$. Given $n\in\dbN$, let
$F_n$ denote a free group on $n$ generators, and let $\IA_n$ be the subgroup of $\Aut(F_n)$ consisting of automorphisms acting trivially on the abelianization $F_n^{ab}=F_n/[F_n,F_n]$. The group $\IA_n$ is often called the Torelli subgroup
of $\Aut(F_n)$ and is known to behave similarly to $\calI_n^1$ in many ways.
\vskip .12cm

In 1935, Magnus~\cite{Ma} proved that $\IA_n$ is finitely generated for all $n\geq 2$; in fact, he found an explicit and simple-to-describe generating set of smallest possible cardinality $n{n\choose 2}$. In 1983, Johnson~\cite{Jo:fg} proved that $\calI_n^b$ is finitely generated for $n\geq 3$; his generating set is also explicit and also of optimal size for $n=3$, but of considerably larger size in general (growing exponentially in $n$). More recently, Putman~\cite{Pu1} found a smaller generating set whose size grows cubically with $n$, which is known to be asymptotically optimal.\footnote{The smallest size of a generating set for $\IA_n$ is indeed $n{n\choose 2}$ since its abelianization $\IA_n^{\rm ab}$
is free of rank $n{n\choose 2}$ -- see \S~3 for details. Likewise the asymptotic optimality of the generating set from Putman~\cite{Pu1}
follows from the fact that the torsion-free rank of $(\calI_n^b)^{\rm ab}$ is cubic in $n$ which was proved by Johnson~\cite{Jo:ab1,Jo:ab2} -- see \S~4 for details.}

\vskip .12cm

It was a very interesting question whether the commutator subgroup $[G,G]$ is finitely generated for $G=\IA_n$ or $\calI_n^b$. In both cases $G$ has large abelianization, so there was no a priori reason to expect $[G,G]$ to be finitely generated. On the other hand, $G$ possesses a generating set in which many pairs of generators commute, which can be seen as positive evidence for finite generation of $[G,G]$. An additional motivation for the finite generation question in the mapping class group case is given by the fact that 
$[\calI_n^b,\calI_n^b]$ is a finite index subgroup of the Johnson kernel $\calK_n^b$, a group of major interest in topology.
\vskip .12cm

In \cite{EH} it was proved that the commutator subgroups of the Torelli groups are indeed finitely generated in sufficiently large rank: $[\IA_n,\IA_n]$ for $n\geq 4$ and $[\calI_n^b,\calI_n^b]$ for $n\geq 12$. 
In \cite{CEP}, $[\calI_n^b,\calI_n^b]$ was shown to be finitely generated for all $n\geq 4$, and finite generation
was also extended to some higher terms of the lower central series, namely
$\gamma_k \IA_n$ for $n\geq 4k-3$ and $\gamma_k \calI_n^b$ for $n\geq 2k+1$ (see \S~\ref{sec:introQR} for an additional discussion).
However, neither \cite{EH} nor \cite{CEP} dealt directly with the finite generation question, instead reducing the problem to an analysis of BNS invariants.
\vskip .12cm

Given a finitely generated group $G$, its character sphere $\dbS(G)$ is the set of nonzero homomorphisms from $G$
to $(\mathbb R,+)$ modulo the equivalence given by multiplication by positive scalars. The BNS-invariant of $G$,
introduced by Bieri, Neumann and Strebel in \cite{BNS} and denoted by $\Sigma(G)$, is a subset of $\dbS(G)$ which determines which subgroups of $G$ containing $[G,G]$ are finitely generated (see Theorem~\ref{thm:BNS}, often called the BNS criterion). In particular, $[G,G]$ itself is finitely generated if and only if $\Sigma(G)=\dbS(G)$, and it is the latter equality that was established in \cite{EH} and \cite{CEP} for $G=\IA_n$ and $\calI_n^b$ for $n\geq 4$. Finite generation of higher terms of the lower central series was established in \cite{CEP} by an inductive application of the BNS criterion. 
\vskip .12cm

Since the proof of the BNS criterion is not effective, \cite{EH} and \cite{CEP} did not yield an actual construction of finite generating sets for $[G,G]$ (or higher terms) for $G=\IA_n$ and $\calI_n^b$.
The main goal of the present paper is to give an effective proof of finite generation for $[G,G]$ when
$n\geq 8$. In \S~\ref{sec:introQR} we will discuss the main obstacle to extending this method to $\gamma_k G$ for
$k>2$.
\vskip .12cm

The proof of finite generation that we will provide does not make a formal reference to the BNS invariant; however, it relies on the proof of the BNS criterion in a substantial way. Essentially, we follow the proof of the BNS criterion given in \cite{Str} (which is considerably simpler than the original argument from \cite{BNS}), replace all the non-effective steps with explicit constructions and make some simplifications which are not possible in general. In addition to providing explicit generating sets for $[G,G]$, the proof in this paper is algorithmic in the following sense -- given a sufficiently nice generating set $S$ of $G$ and an element $g\in [G,G]$ expressed in terms of $S$, our proof yields a procedure for writing $g$ in terms of a finite generating set for $[G,G]$ (which is explicitly derived from $S$). Our general method for proving effective finite generation of $[G,G]$ that will be developed in \S~\ref{sec:EFG}
is sufficiently flexible and could be applicable to other groups.
\subsection{Main results}

We proceed with stating our main result for $\IA_n$. Throughout the paper, for a group $G$ and elements $x,y\in G$, we set $x^y=y^{-1}xy$ and $[x,y]=x^{-1}y^{-1}xy$.

\begin{Theorem}
\label{thm:main_IAn} Let $n\geq 8$. Let $N=n{n\choose 2}$, and let $S=\{s_1,\ldots, s_N\}$ be the standard generating set for
$\IA_n$ constructed by Magnus (see the beginning of \S~\ref{sec:IAn}). Then $[\IA_n,\IA_n]$ is generated by elements of the form
$$[s_i, s_j]^{s_i^{a_i}s_{i+1}^{a_{i+1}}\ldots s_N^{a_N}}
\mbox{ where } 1\leq i<j\leq N \mbox{ and } 0\leq |a_m|< 5\cdot 10^{12} \mbox{ for each } m.$$ 
In particular, the minimal number of generators of $[\IA_n,\IA_n]$ is at most $n{n\choose 2}\cdot ({10}^{13})^{n{n\choose 2}}$.
\end{Theorem}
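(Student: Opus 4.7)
The proof will apply the general Effective Finite Generation framework developed in Section~\ref{sec:EFG}, specialized to $G = \IA_n$ with the Magnus generating set $S = \{s_1, \ldots, s_N\}$. This framework takes as input a group with a linearly ordered generating set together with sufficiently explicit rewriting rules for $s_j^{s_i^{\pm 1}}$ and outputs a finite generating set for $[G, G]$ of exactly the shape appearing in the statement: conjugates $[s_i, s_j]^w$ with $w = s_i^{a_i} s_{i+1}^{a_{i+1}} \cdots s_N^{a_N}$ of bounded exponents. My task is thus to invoke the framework for Magnus's generators and carry out the explicit estimates that produce the constant $5 \cdot 10^{12}$.

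First I would reduce an arbitrary conjugate $[s_i, s_j]^g$ to one whose conjugator is in lexicographic normal form. Since $G^{ab}$ is free abelian on the images of the $s_i$, every $g \in G$ admits an expression $g = s_1^{e_1} s_2^{e_2} \cdots s_N^{e_N} \cdot c$ with $c \in [G, G]$. The identities $[x,y]^{uv} = ([x,y]^u)^v$ and $u^v = u \cdot [u, v]$, together with an induction on word length in $S$, allow the tail $c$ to be absorbed into a product of designated generators, reducing to the case where the conjugator is itself the normal-form word $s_1^{e_1} \cdots s_N^{e_N}$.

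The second step is to strip off the initial segment $s_1^{e_1} \cdots s_{i-1}^{e_{i-1}}$, so that the conjugator of $[s_i, s_j]$ starts at index $i$ as in the statement. This is where the hypothesis $n \geq 8$ is used: Magnus's generators $K_{ij}$ and $K_{ijk}$ are supported on small subsets of the free basis, and for $n \geq 8$ there are enough indices for each generator $s_m$ with $m < i$ to be either disjoint in support from $[s_i, s_j]$ (and hence commute with it) or to satisfy an explicit commutator relation $[s_m, [s_i, s_j]] = \prod [s_p, s_q]^{w_{pq}}$ in which each factor already lies in the prospective designated generating set. The effective nature of this step rests on the classical explicit form of the relations among Magnus generators.

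Finally I would reduce each exponent $a_m$ for $i \le m \le N$ modulo an explicit integer $M$ to land in the range $|a_m| < 5 \cdot 10^{12}$. The key is a telescoping identity $[s_i, s_j]^{s_m^{M} u} = [s_i, s_j]^u \cdot (\text{product of designated generators})$, where $M$ is extracted from the explicit length and structure of the rewriting rule for $s_m \cdot [s_i, s_j] \cdot s_m^{-1}$ in $\IA_n$; the constant $5 \cdot 10^{12}$ emerges as the product of the various moduli associated to the different types of Magnus generators. The main obstacle throughout will be the bookkeeping: every correction commutator introduced by the three reductions above must itself be shown to lie in the subgroup generated by the designated elements, which requires a carefully designed induction on a combined rank function measuring both the length of the conjugator and the total exponent sum. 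Once this is in place, the cardinality bound $n\binom{n}{2} \cdot (10^{13})^{n\binom{n}{2}}$ follows immediately from counting the $\binom{N}{2}$ pairs $(i,j)$ and the at most $(10^{13})^{N-i+1}$ exponent tuples for each.
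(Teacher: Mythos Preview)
Your proposal mischaracterizes the framework of \S~\ref{sec:EFG} and then substitutes an approach that has a genuine gap. The machinery in \S~\ref{sec:EFG} does \emph{not} take ``rewriting rules for $s_j^{s_i^{\pm 1}}$'' as input; its inputs are a chain-centralizing pair $(S,Z)$ and a finite set $\Phi\subset\Aut(G,K)$ satisfying the Regularity Hypothesis, together with explicit bounds on the constants $A(\Phi)$, $B(\Phi)$, $C$. The output is a radius $R$ such that the preimage of the $l^\infty$-ball $B_\infty(R)$ in $G/[G,G]$ is connected in $Cay(G,S)$ (Theorem~\ref{prop:connectedradius}); the generating set of the claimed shape then falls out of the effective Reidemeister--Schreier Theorem~\ref{preimage_effective}. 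In the paper's actual proof for $\IA_n$, one takes $Z=\{K_{12},K_{34},K_{56},K_{78}\}$ (this is where $n\ge 8$ enters), builds $\Phi$ from short products of transvections and permutation matrices in $\GL_n(\dbZ)$ (Lemma~\ref{lemma:SL_n}), computes $A\le 8100$, $B\le 150$, $C=3$, and plugs into $R=16(BC+1)^2(AB+3A+1)<5\cdot 10^{12}$.

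Your Steps~2 and~3 replace this with direct commutator manipulations, and Step~3 is where the argument breaks. The ``telescoping identity'' $[s_i,s_j]^{s_m^M u}=[s_i,s_j]^{u}\cdot(\text{designated generators})$ amounts to asserting that $[[s_i,s_j],s_m^M]^{u}$ already lies in the subgroup generated by the designated elements. But there is no periodicity in $\IA_n$ that makes $s_m^M$ special for any fixed $M$, and expressing this iterated commutator in terms of designated generators is precisely the problem you are trying to solve; the proposed induction on a ``combined rank function'' is not set up, and it is unclear what quantity would decrease. The constant $5\cdot 10^{12}$ does not arise as a product of moduli from rewriting rules---it comes from the geometric path-shortening argument in the proof of Theorem~\ref{prop:connectedradius}, which you have bypassed entirely. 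Likewise, your use of $n\ge 8$ in Step~2 (support-disjointness to strip the prefix $s_1^{e_1}\cdots s_{i-1}^{e_{i-1}}$) is not how the hypothesis functions: the prefix-stripping is handled for free by the Schreier structure in Theorem~\ref{preimage_effective}, while $n\ge 8$ is needed so that $Z$ has four pairwise-commuting elements and $(S,Z)$ is chain-centralizing.
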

\begin{Remark}\rm (1) It was previously known~\cite{To}
\footnote{It suffices to prove this result when $G$ is free, in which case the given generating set for $[G,G]$
freely generates $[G,G]$. The latter is the main result of \cite{To}; recently a more geometric proof was given in \cite{Pu2}.} 
that if $G$ is any group generated by a finite set
$\{x_1,\ldots, x_k\}$, then $[G,G]$ is generated by elements of the form $[x_i, x_j]^{x_i^{a_i}x_{i+1}^{a_{i+1}}\ldots x_k^{a_k}}$ with $1\leq i<j\leq k$ and $a_m\in\dbZ$. Thus, the novel part of Theorem~\ref{thm:main_IAn} is that in the case $G=\IA_n$ it suffices to take only such elements where $a_m$ are bounded by an explicit constant independent of $n$.

(2) In \S~3 we will show that the number of generators of $[\IA_n,\IA_n]$ is actually bounded by a function of the form $C^{n^2}$ (see Theorem~\ref{thm:main_IAn2}) which is slightly better than a bound of the form $C^{n^3}$ given by
Theorem~\ref{thm:main_IAn}.

(3) Our method of proof is in principle applicable to all $n\geq 4$, but would yield a constant larger than $5\cdot 10^{12}$
for $n=6,7$ and a much larger constant for $n=4,5$.
\end{Remark}

We now turn to the mapping class groups. In this case we will describe generating sets for two different subgroups of $\calI_n^b$ -- the commutator subgroup $[\calI_n^b,\calI_n^b]$ and the Johnson kernel $\calK_n^b$. 
One can define the Johnson kernel algebraically, as the second term of the Johnson filtration of $\calI_n^b$ 
(see \S~\ref{sec:prelim} for the definition of the Johnson filtration in the case $b=1$; the definition in the case
$b=0$ is similar) or topologically, as the subgroup of $\Mod_n^b$ generated by Dehn twists about separating curves. It also follows from work of Johnson that $\calI_n^b/\calK_n^b$ is the largest quotient of $\calI_n^b$ which is  abelian and torsion-free. This, together with finite generation of $\calI_n^b$, immediately implies that $\calK_n^b$ contains $[\calI_n^b,\calI_n^b]$ as a finite-index subgroup for $n\geq 3$.

\vskip .12cm

For simplicity we state Theorems~\ref{thm:main_Torelli}~and~\ref{thm:main_Torelli2} below for $b=1$. 
It is well known that there is a natural surjective map
$\Mod_n^1\to \Mod_n^0$ which sends $\calI_n^1$ to $\calI_n^0$ and $\calK_n^1$ to $\calK_n^0$, so any finite generating set
of $\calK_n^1$ yields the corresponding finite generating set for $\calK_n^0$.

\begin{Theorem}
\label{thm:main_Torelli} Let $n\geq 4$, let $G=\calI_n^1$ and $K=\calK_n^1$. Let $N={2n\choose 3}$ and
$M={2n\choose 2}+{2n\choose 1}+{2n\choose 0}$.
The following hold:
\begin{itemize}
\item[(1)] $G$ has a generating set $S^{(1)}\sqcup S^{(2)}\sqcup S^{(3)}$ with the following properties: 
\begin{itemize}
\item[(i)] $|S^{(1)}|=N$ and the elements $s_1,\ldots, s_N$ of $S^{(1)}$ project to a basis of $G/K$ (which is free abelian of rank $N$).
\item[(ii)] $|S^{(2)}|=M$, the elements $t_1,\ldots, t_M$ of $S^{(2)}$ lie in $K$, and $S^{(2)}$ projects to a basis of $K/[G,G]$ (which is a vector space over $\dbF_2$ of dimension $M$).
\item[(iii)] $S^{(3)}$ is contained in $[G,G]$ and $|S^{(3)}|=42{n\choose 3}-|S^{(1)}|-|S^{(2)}|$.
\end{itemize}
\vskip .12cm
\item[(2)] There exists an absolute constant $R$ such that $K$ is generated by elements of the form
\begin{itemize}
 \item[(a)] $[s_i,s_j]^{s_i^{a_i}s_{i+1}^{a_{i+1}}\ldots\,\, s_N^{a_N}}$ where $1\leq i<j\leq N$ and $|a_m|\leq R$ for all $m$;
 \item[(b)] $x^{s_1^{a_1}s_{2}^{a_{2}}\ldots s_N^{a_N}}$ where $x\in S^{(2)}\cup S^{(3)}$ and $|a_m|\leq R$ for all $m$.
\end{itemize}
\vskip .12cm
\item[(3)] $[G,G]$ is generated by elements of the form
\begin{itemize}
 \item[(a)] $[s_i,s_j]^{s_i^{a_i}s_{i+1}^{a_{i+1}}\ldots\,\, s_N^{a_N}t_1^{\eps_1}\ldots t_M^{\eps_M}}$ with $1\leq i<j\leq N$; 
 \item[(b)] $[s_i,t_j]^{s_i^{a_i}s_{i+1}^{a_{i+1}}\ldots\,\, s_N^{a_N}t_1^{\eps_1}\ldots t_M^{\eps_M}}$ with $1\leq i\leq N$ and $1\leq j\leq M$;
 \item[(c)] $[t_i,t_j]^{t_i^{\eps_i}t_{i+1}^{\eps_{i+1}}\ldots\,\, t_M^{\eps_M}}$ with $1\leq i<j\leq M$; 
 \item[(d)] $x^{s_1^{a_1}s_{2}^{a_{2}}\ldots s_N^{a_N}t_1^{\eps_1}\ldots t_M^{\eps_M}}$ with $x\in S^{(3)}$,
\end{itemize}
where in each part $|a_m|\leq R$, $\eps_m\in\{0,1\}$ for all $m$ and $R$ is the same as in (2).
\end{itemize}
\end{Theorem}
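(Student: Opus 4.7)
The plan is to follow the strategy used for $\IA_n$ in Theorem~\ref{thm:main_IAn}, but with two layers of effective rewriting corresponding to the two-step filtration $[G,G]\trianglelefteq K\trianglelefteq G$. The key inputs are that $G/K\cong\wedge^3 H$ is free abelian of rank $N=\binom{2n}{3}$ (Johnson) and $K/[G,G]\cong \dbF_2^M$ (Birman--Craggs--Johnson), so on each step Reidemeister--Schreier rewriting admits explicit, easily enumerated coset representatives.

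For part~(1) I would start from an explicit size-$42\binom{n}{3}$ generating set for $\calI_n^1$, following Putman~\cite{Pu1}. First I would choose the $N$ elements $s_i$ of $S^{(1)}$ so that their Johnson images form the standard basis of $\wedge^3 H_1(\Sigma_n^1;\dbZ)$; next I would pick $M$ elements $t_j\in K$ (separating twists and bounding pair maps) whose Birman--Craggs--Johnson images form the standard basis of $K/[G,G]$; the remaining elements are placed in $S^{(3)}$ and lie automatically in $[G,G]$ by construction, giving $|S^{(3)}|=42\binom{n}{3}-N-M$.

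For part~(2) I would apply the effective finite generation framework of \S\ref{sec:EFG} to the extension $1\to K\to G\to G/K\to 1$. Since $G/K$ is free abelian on the images of $s_1,\ldots,s_N$, every $g\in G$ has a unique normal form $s_1^{a_1}\cdots s_N^{a_N}k$ with $k\in K$, and the Reidemeister--Schreier procedure presents $K$ using two kinds of generators: conjugates of $x\in S^{(2)}\cup S^{(3)}\subset K$ by words in the $s_i$'s, and ``correction commutators'' $[s_i,s_j]^w$ encoding the failure of the $s_i$'s to commute in $G$. The effective content is to bound the $s_i$-exponents appearing in $w$ by an absolute constant $R$; as in the $\IA_n$ case this would be done by analyzing the commutators among Putman's generators, exploiting that most pairs of these generators are supported on disjoint subsurfaces (and hence commute) while the rest have commutators supported on a small subsurface and can therefore be rewritten using a bounded number of neighboring generators, with bounds independent of $n$. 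For part~(3) I would iterate the same effective Reidemeister--Schreier argument for the inclusion $[G,G]\leq K$; since $K/[G,G]\cong \dbF_2^M$ has basis the images of the $t_j$, coset representatives take the form $s_1^{a_1}\cdots s_N^{a_N}t_1^{\eps_1}\cdots t_M^{\eps_M}$ with $\eps_j\in\{0,1\}$, and the resulting generators of $[G,G]$ are exactly those listed in (a), (b), (c), with new correction commutators $[s_i,t_j]^w$ appearing alongside the ones already produced in part~(2).

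The hard part will be producing the absolute constant $R$ in part~(2). This requires an explicit effective analysis of commutators among Putman's mapping class group generators — a significantly more delicate task than the analogous analysis of Magnus's generators for $\IA_n$, because Putman's generators are less uniform in shape and interact through a wider variety of small-subsurface identities. Once $R$ has been pinned down in part~(2), part~(3) is essentially free: the passage from $K$ to $[G,G]$ only introduces a bounded amount of additional computation inside the finite-dimensional $\dbF_2$-quotient $K/[G,G]$, with no further dependence on $n$, so the same constant $R$ can be reused.
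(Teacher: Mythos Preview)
Your overall architecture is right and matches the paper: use the \S\ref{sec:EFG} machinery for the step $K\trianglelefteq G$ with $G/K$ free abelian, then pass to $[G,G]\trianglelefteq K$ by ordinary Reidemeister--Schreier over the $\dbF_2$-quotient. Part~(3) is indeed essentially free once~(2) is done, exactly as you say.

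The gap is in your account of how the absolute constant $R$ is produced. You describe it as ``analyzing the commutators among Putman's generators'' and rewriting those supported on small subsurfaces. That is not the mechanism, neither here nor in the $\IA_n$ case. The disjoint-support commutation is only used to get a chain-centralizing pair $(S,Z)$; by itself it yields no bound on $R$. The bound comes from verifying the Regularity Hypothesis: one must construct, for every nonzero character $\chi$ of $G/K$, an element $g\in\Sp_{2n}(\dbZ)$ (a short product of transvections $\tau_{ij}$ and transpositions) such that $|g\chi(z)|\geq M(\chi)/C$ on $Z$, lift $g$ to $\phi\in\Mod_n^1$, and then bound the constants $A(\phi^{\pm1},S)$ and $B(\phi^{\pm1},S)$ uniformly in $n$. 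The formula $R=16(BC+1)^2(AB+3A+1)$ from Theorem~\ref{prop:connectedradius} then gives the absolute bound.

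The genuinely delicate point, which your proposal does not address, is why $A(\phi^{\pm1},S)$ is bounded independently of $n$ when $\phi$ is (a lift of) a single Dehn twist like $T_{\alpha_i\alpha_j^{-1}}$. This is not obvious even for fixed $i,j$ as $n\to\infty$, and it fails for a generic choice of $S$. The paper resolves it by building $S$ in a very specific way: fix one $42$-element generating set $S_{[3]}$ for $\calI_{[3]}$, and for every $3$-element $I\subseteq[n]$ set $S_I=f_I S_{[3]} f_I^{-1}$, where $f_I\in\Mod(\Sigma)$ carries $\Sigma_{[3]}$ onto $\Sigma_I$ in a way compatible with the standard $\pi_1$-generators (Corollary~\ref{cor:iso}). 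The coherence relation $f_{I_J}=f_If_J$ on $\Sigma_{[|J|]}$ (Observation~\ref{obs:iso}) then lets one conjugate any computation $T_\gamma s T_\gamma^{-1}$ back into $\calI_{[5]}$, where there are only boundedly many cases, and transport the answer back out---this is Proposition~\ref{prop:Aestimate}. Without this transported generating set and the $f_I$-compatibility, your ``rewriting using a bounded number of neighboring generators'' has no reason to terminate with bounds independent of $n$.

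A minor point on part~(1): the leftover generators do not ``lie automatically in $[G,G]$''. After separating out $S^{(1)}$ and replacing each redundant $s\in S_2$ by $s^{-1}s_{d(s)}\in K$, one still has to correct each remaining element by a product of elements of $S^{(2)}$ to force it into $[G,G]$; this is how $S^{(3)}$ is actually built.
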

\begin{Remark}\rm We will construct explicit generating sets for $[\calI_n^b,\calI_n^b]$ and $\calK_n^b$ only for $n\geq 8$.
Theorem~\ref{thm:main_Torelli} is still valid for all $n\geq 4$ since $[\calI_n^b,\calI_n^b]$ and $\calK_n^b$ are known to be finitely generated for all $n\geq 4$ by \cite{CEP} and we are not making any assertions about the constant $R$ above.

The problem of explicitly estimating $R$, at least for $n\geq 8$, reduces to a certain computation in the Torelli group 
$\calI_5^1$. We did not compute a precise upper bound, but we believe that this can be achieved by carefully examining the proofs of Johnson~\cite{Jo:fg} and Stylianakis~\cite{Sty} (see the end of \S~\ref{sec:pfTorelli2} for a detailed discussion). 
\end{Remark}

\subsection{Generating the Johnson kernel by finitely many Dehn twists}

One drawback of the generating sets from Theorem~\ref{thm:main_Torelli} is that they do not seem to have any natural geometric or topological interpretation. We will now address this issue in the case of the Johnson kernel $\calK_n^1$. Recall that $\calK_n^1$ is generated by the Dehn twists about separating curves. Thus, one way to produce an explicit geometrically meaningful finite generating set for $\calK_n^1$ is to show that
$\calK_n^1$ is generated by the Dehn twists about separating curves of explicitly bounded word length.

Let us now make our task more precise. Fix a point $p_0$ on the boundary of $\Sigma= \Sigma_n^1$. The fundamental group $\pi_1(\Sigma,p_0)$ is free of rank $2n$ and admits a basis ${\alpha_1,\beta_1\ldots, \alpha_{n},\beta_n}$ such that 
$\prod_{i=1}^n [\alpha_{i},\beta_{i}]$ is represented
by $\partial \Sigma$; below we will refer to such a basis as {\it natural}. Fix a natural basis $\omega$ of $\pi_1(\Sigma,p_0)$.
Given $m\in\dbN$, let $SC(m)$ be the set of all elements of $\pi_1(\Sigma,p_0)$ which have word length at most $m$ with respect to $\omega$ and which are represented by a separating simple curve on $\Sigma$. Let 
$T_{sc}(m)\subset \Mod(\Sigma_n^1)$ be the set of Dehn twists about the elements of $SC(m)$. As we will explain in \S~4, any two 
natural bases of $\pi_1(\Sigma,p_0)$ lie in the same orbit under the action of $\Mod(\Sigma,p_0)$ on $\pi_1(\Sigma,p_0)$ (see the remark after Theorem~\ref{thm:Zieschang}). This easily 
implies that $T_{sc}(m)$  is independent of $\omega$ up to conjugation; in particular, the smallest $m$ for which  $T_{sc}(m)$ 
generates $\calK_n^1$ does not depend on the choice of $\omega$. 

We can now formulate our theorem describing an explicit finite generating set for $\calK_n^1$ consisting of Dehn twists:

\begin{Theorem}
\label{thm:main_Torelli2}
Assume that $n\geq 4$. There exists an absolute constant $D$ such that $\calK_n^1$ is generated by the set $T_{sc}(D^{n^3})$ defined above.  
\end{Theorem}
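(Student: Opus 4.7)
The plan is to derive the Dehn-twist description from Theorem~\ref{thm:main_Torelli}(2) using the conjugation identity $T_c^{\phi} = T_{\phi^{-1}(c)}$, which transfers word-length bounds on the conjugating element $\phi$ into word-length bounds on the twist curve. By Theorem~\ref{thm:main_Torelli}(2), $\calK_n^1$ is generated by conjugates $y^w$ where $y$ runs through the finite set
$$Y \;=\; \{[s_i,s_j] : 1\leq i<j\leq N\} \;\cup\; S^{(2)} \cup S^{(3)} \;\subset\; \calK_n^1,$$
and $w$ is a product of at most $L := R\cdot N = O(n^3)$ factors from $S^{(1)}$. The strategy is first to decompose each $y \in Y$ as a product of Dehn twists about short separating curves, and then to bound how much such a conjugating word $w$ can stretch those curves under its action on $\pi_1(\Sigma_n^1,p_0)$.

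The first main ingredient is the existence of an absolute constant $B$ such that every $y \in Y$ equals a product $\prod_k T_{c_k}^{\eps_k}$ of Dehn twists about separating simple closed curves $c_k$, each represented by a based loop of $\omega$-word length at most $B$. This should follow from the construction of $S^{(1)} \cup S^{(2)} \cup S^{(3)}$ in Theorem~\ref{thm:main_Torelli}(1): these generators, and hence the commutators $[s_i,s_j]$, are supported in subsurfaces of bounded genus in $\Sigma_n^1$, and any element of $\calK$ supported in a fixed bounded-genus subsurface can be written as a product of separating twists whose underlying curves lie in that subsurface. Choosing the natural basis $\omega$ compatibly with such a subsurface, the $\omega$-word length of each $c_k$ is bounded in terms of the fixed genus alone. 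The second ingredient is a Lipschitz estimate: each $s \in S^{(1)}$ acts on $\pi_1(\Sigma_n^1,p_0)$ multiplying $\omega$-word length by at most some absolute constant $\lambda$. This holds because each $s$ is a bounded product of Dehn twists about $\omega$-bounded simple closed curves, and a Dehn twist about a curve of $\omega$-length $\ell$ inflates the word length of any loop by a factor of $O(\ell)$.

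Combining the two ingredients: for $w = s_{i_1}^{a_1}\cdots s_{i_L}^{a_L}$ with $|a_m|\leq R$, the automorphism $w^{-1}$ of $\pi_1(\Sigma_n^1,p_0)$ has $\omega$-distortion at most $\lambda^{O(L)} = \lambda^{O(n^3)}$, so $|w^{-1}(c_k)|_\omega \leq \lambda^{O(n^3)} \cdot B \leq D^{n^3}$ for a suitable absolute constant $D$. Since mapping classes preserve separating-ness, each $w^{-1}(c_k)$ is again a separating simple closed curve, and therefore
$$y^w \;=\; \prod_k T_{w^{-1}(c_k)}^{\eps_k}$$
is a product of elements of $T_{sc}(D^{n^3})$, establishing the theorem. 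The main obstacle is producing the uniform decomposition in the first ingredient: showing that the commutators $[s_i,s_j]$ and the elements of $S^{(2)}\cup S^{(3)}$ can be written as products of separating-curve Dehn twists of $\omega$-length bounded independently of $n$. For $n\geq 8$ this should reduce to an explicit lantern-type calculation inside a fixed bounded-genus subsurface; for the small cases $n\in\{4,5,6,7\}$, where Theorem~\ref{thm:main_Torelli}(1) does not specify explicit generators, one can invoke the finite generation of $\calK_n^1$ established in \cite{CEP} for each such $n$ individually and absorb the resulting finite data into the constant $D$.
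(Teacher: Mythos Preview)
Your strategy is exactly the paper's: both ingredients you isolate are precisely Lemma~\ref{lem:expansion}(a) and (b), and your combination of them via $T_c^{\phi}=T_{(\phi^{-1})^*(c)}$ is the paper's final paragraph. Where your sketch leaves a gap is in justifying the \emph{uniformity in $n$} of both ingredients, which you flag as ``the main obstacle.'' The paper's mechanism for this is worth knowing, since it is not a lantern-type identity but a conjugation trick.

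The key point is that the generating set $S$ is built as $\bigcup_{|I|=3} f_I S_{[3]} f_I^{-1}$, where each $f_I\in\Mod(\Sigma)$ carries $\Sigma_{[3]}$ onto $\Sigma_I$ and satisfies $f_I^*(\alpha_j)=\alpha_{i_j}$, $f_I^*(\beta_j)=\beta_{i_j}$. Given $s\in S_I$ and $t\in S_J$, set $L=I\cup J$ (so $|L|\leq 6$); Observation~\ref{obs:iso} lets one write $s=f_L s' f_L^{-1}$ and $t=f_L t' f_L^{-1}$ with $s',t'\in S_{[|L|]}$ lying in a \emph{fixed} finite set independent of $n$. Hence $w(s,t)=f_L\, w(s',t')\, f_L^{-1}$, where $w(s',t')$ ranges over finitely many elements of $\calK(\Sigma_{[l]})$, $l\leq 6$ (using Claim~\ref{claim:Church}). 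Each of these finitely many elements is a product of Dehn twists about separating curves $\gamma_i$ with $\|\gamma_i\|_{\omega_l}\leq C_w$, and since $f_L^*(\omega_l)\subseteq\omega$ one gets $\|f_L^*(\gamma_i)\|_{\omega}\leq C_w$. This is your first ingredient. The same $f_L$-conjugation handles your second ingredient directly (Lemma~\ref{lem:expansion}(b)): for $\lambda\in\omega$ one has $s^*(\lambda)=f_L^*((s')^*\alpha_j)$ for some $j\in[l]$, and again finiteness of the pairs $(s',j)$ gives the uniform Lipschitz constant, without passing through Dehn twists.

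One minor point: the paper runs this argument with the conjugated set $S^{(4)}=S_3\cup\{s^{-1}s_{d(s)}:s\in S_2\}$ rather than $S^{(2)}\cup S^{(3)}$, precisely because every element of $S^{(4)}$ has the form $w(s,t)$ for $s,t\in S$ and $w$ one of three fixed two-variable words ($a$, $a^{-1}b$, or $[a,b]$). Your $S^{(3)}$ elements are products of up to $M=O(n^2)$ elements of $S^{(4)}$, which does not spoil the conclusion (only the curve lengths matter, not the number of twists), but it makes your ``absolute constant $B$'' claim literally false for $S^{(3)}$ and the argument less clean.
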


Theorem~\ref{thm:main_Torelli2} will be obtained as a relatively easy consequence of Theorem~\ref{thm:main_Torelli} and some auxiliary results established in \S~4. The constant $D$ in Theorem~\ref{thm:main_Torelli2} can be expressed in terms of the constant $R$ from Theorem~\ref{thm:main_Torelli} and two other absolute constants which we believe can be estimated explicitly for $n\geq 8$.

\subsection{Some questions and remarks}
\label{sec:introQR}

As we already stated in \S~\ref{sec:introresults}, finite generation results from \cite{EH} were extended to higher terms of the lower central series in \cite{CEP}, where it was shown that $\gamma_k \IA_n$ is finitely generated whenever $n\geq 4k-3$ and $\gamma_k \calI_n^b$ is finitely generated
whenever $n\geq 2k+1$. Thus it is natural to ask if the method of the current paper can also provide explicit generating sets for higher terms.
We did not succeed in doing this.

The proof in \cite{CEP} was ineffective for two reasons: similarly to \cite{EH}, it exploited the BNS invariant. In addition, a combinatorial calculation from \cite{EH} was replaced by an ineffective Zariski density argument in 
\cite{CEP}. The latter is not a real obstacle to constructing explicit generating sets, and one can show that algebraic geometry can be eliminated from the proof in \cite{CEP} at the expense of increasing the lower bound on $n$ in terms of $k$ (for which we are claiming that $\gamma_k G$ is finitely generated), with the new bound being quadratic in $k$. What does cause a problem is the fact that for $k\geq 2$, $G=\IA_n$ or $\calI_n^b$,
very little seems to be known about the torsion in $\gamma_k G/\gamma_{k+1}G$ or the presentation of $\gamma_k G/\gamma_{k+1}G$ by generators and relations (as an abelian group). 
\vskip .2cm
We conclude this section with some speculations on the asymptotic growth of the number of generators of 
$[\IA_n,\IA_n]$ and $\calK_n^b$ as $n\to\infty$. Below, for a group $\Gamma$ we will denote by $d(\Gamma)$ the minimal number of generators of $\Gamma$. The following inequalities are obvious:
$$d(\Gamma)\geq d(\Gamma^{\rm ab})=d(H_1(\Gamma,\dbZ))\geq \dim H_1(\Gamma,\dbQ).$$
It is known that $\dim H_1(\calK_n^b,\dbQ)$ grows polynomially with $n$, and in fact
a precise formula for this dimension for $n\geq 6$ can be immediately extracted from Theorem~1.4 in a recent paper
of Morita, Sakasai and Suzuki~\cite{MSS} which, in turn, makes essential use of an earlier work of 
Dimca, Hain and Papadima~\cite{DHP}. This provides at least some evidence that $d(\calK_n^b)$ might grow polynomially as well. We are not aware of analogous results dealing with $[\IA_n,\IA_n]$.
\vskip .2cm
Finally, it is natural to ask if the assertions of Theorem~\ref{thm:main_IAn} and Theorem~\ref{thm:main_Torelli}(2)
would remain true if the condition $|a_m|\leq C$ on the exponents is replaced by the much more restrictive condition $\sum |a_m|\leq C$ for some absolute constant $C$. Clearly, if this stronger version of Theorem~\ref{thm:main_IAn} (resp. Theorem~\ref{thm:main_Torelli}(2)) holds, it would immediately imply polynomial growth for $d([\IA_n,\IA_n])$ (respectively, $d(\calK_n^b)$).

\vskip .2cm
{\bf Acknowledgments}. We are extremely grateful to Andrew Putman for explaining to us the proof of the BNS criterion given in \cite{Str}
and to the anonymous referee who made a number of suggestions that helped improve the exposition.
We also thank Thomas Church, Thomas Koberda and Andrew Putman for useful discussions related to the subject of this paper.
After this paper was completed, the authors learned that results similar to those in this paper were independently obtained by Church and Putman (unpublished).

\section{BNS invariant and effective finite generation}
\label{sec:EFG}
We begin this section with some basic terminology. Let $G$ be a group and $S$ a subset of $G$. 
\vskip .12cm

\paragraph{\bf Cayley graphs} The {\it Cayley graph} of $G$ with respect to $S$, denoted by $Cay(G,S)$, is the graph whose vertex set is $G$ and where $g,h\in G$ are connected by an edge if and only if $h=gs^{\pm 1}$ for some $s\in S$.
It is clear that $Cay(G,S)$ is connected if and only if $S$ generates $G$.

\vskip .12cm
\paragraph{\bf $S$-words} By an {\it $S$-word}, we will mean a formal expression $s_1\ldots s_k$ with $s_i\in S\cup S^{-1}$. Thus each $S$-word naturally represents an element of $G$, and every element of $G$ is represented by some $S$-word if and only if $S$ generates $G$.
For each $g\in G$, there is a natural bijection between $S$-words representing $g$ and paths in $Cay(G,S)$ from $1$ to $g$. 

\vskip .12cm
\paragraph{\bf Prefixes}
If $w=s_1\ldots s_k$ is an $S$-word representing $g\in G$, by a {\it prefix} of $w$ we will mean a subword of the form $s_1\ldots s_l$ with $l\leq k$. Thus, geometrically, prefixes of $w$ correspond to initial segments of the corresponding path in $Cay(G,S)$ from $1$ to $g$.

\vskip .12cm
\paragraph{\bf Word length} 
If $S$ generates $G$, for each $g\in G$ we denote by $\|g\|_S$ the word length of $g$ with respect to $S$, that is, the smallest
$k\in\dbZ_{\geq 0}$ such that $g$ is represented by an $S$-word $s_1\ldots s_k$. Geometrically, $\|g\|_S$ is the distance
from $1$ to $g$ in $Cay(G,S)$.

\subsection{Review of the BNS invariant}
\label{sec:BNSreview}

We start by recalling the definition of the BNS invariant.
By a character of a group $G$ we will mean a homomorphism from $G$ to the additive group of $\dbR$. Two characters $\chi$ and $\chi'$ will be considered equivalent if they are positive multiples of each other, and the equivalence class of a character $\chi$ will be denoted by $[\chi]$.
The character sphere $\dbS(G)$ is the set of equivalence classes of nonzero characters of $G$.

Assume now that $G$ is generated by a finite set $S$.
Given a character $\chi$ of $G$, denote by $Cay(G,S)_{\chi}$ the full subgraph of $Cay(G,S)$ with vertex set
$\{g\in G: \chi(g)\geq 0\}$. Note that $Cay(G,S)_{\chi}$ is completely determined by the equivalence class of $\chi$.
The BNS invariant of $G$, denoted by $\Sigma(G)$, is defined by
$$\Sigma(G)=\{[\chi]\in \dbS(G): Cay(G,S)_{\chi} \mbox{ is connected}\}.$$
It is not hard to show that $\Sigma(G)$ does not depend on the choice of $S$ although this is not obvious from definition.

The following remarkable result was proved by Bieri, Neumann and Strebel in \cite{BNS}:

\begin{Theorem}[BNS criterion]
\label{thm:BNS} Let $\K$ be a normal subgroup of $G$ such that $G/\K$ is abelian. Then $\K$ is finitely generated if and only if $\Sigma(G)$ contains $[\chi]$ for every character $\chi$ which vanishes on $\K$. In particular, $[G,G]$ is finitely generated if and only if $\Sigma(G)=\dbS(G)$.
\end{Theorem}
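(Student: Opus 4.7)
Both implications hinge on the observation that a character $\chi$ vanishes on $K$ if and only if it factors through the abelian group $G/K$; hence the set $T:=\{[\chi]\in\dbS(G):\chi|_K=0\}$ is a closed, antipodally symmetric subsphere of $\dbS(G)$. The second assertion is the special case $K=[G,G]$, for which $T=\dbS(G)$ because every real character vanishes on the commutator subgroup. Throughout I would also use the standard fact that $\Sigma(G)$ is an open subset of $\dbS(G)$, which is independent of the choice of $S$.

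\textbf{Forward direction.} Suppose $K$ is generated by $k_1,\ldots,k_r$ and fix $[\chi]\in T$. Express each $k_i$ by an $S$-word $w_i$; since $\chi(k_i)=0$, the path from $1$ to $k_i$ traced by $w_i$ in $\Cay(G,S)$ visits only vertices with $\chi$-values in a bounded window $[-C,C]$. To show $\Cay(G,S)_\chi$ is connected, given $g$ with $\chi(g)\geq 0$ I would start with any $S$-word $u=s_1\cdots s_n$ representing $g$. Whenever a prefix of $u$ drops to $\chi$-value below zero, I splice in a conjugate $hw_i^{\pm 1}h^{-1}$ by a sufficiently positive-$\chi$ element $h\in G$ in order to bypass the excursion while representing the same element. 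Normality of $K$ ensures the conjugate lies in $K$, and the boundedness of the $w_i$-paths controls how far the corrected path dips. Iterating yields an $S$-word representing $g$ whose prefixes all have $\chi$-value $\geq 0$, giving a path in $\Cay(G,S)_\chi$ from $1$ to $g$.

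\textbf{Reverse direction.} This is the main step. Fix $[\chi]\in T\subseteq\Sigma(G)$. Connectedness of $\Cay(G,S)_\chi$ gives, for every $g$ with $0\leq\chi(g)\leq B$ in some fixed range, a non-negative-$\chi$ $S$-word from $1$ to $g$ of bounded length. Applying this to both $[\chi]$ and $[-\chi]$ (which also lies in $T\subseteq\Sigma(G)$) yields a finite set $F_{[\chi]}\subset G$ in terms of which every element of $K$ of bounded ``two-sided word length'' with respect to $\chi$ can be expressed. I would then use the openness of $\Sigma(G)$ to argue that the same $F_{[\chi]}$ works uniformly for an open neighborhood of $[\chi]$ in $T$; since $T$ is compact, a finite subcover yields finitely many $F_{[\chi_i]}$ whose union finitely generates $K$.

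\textbf{Main obstacle.} The hard direction is the principal difficulty. Upgrading pointwise connectedness of the half-space Cayley graphs $\Cay(G,S)_\chi$ into uniform generation of $K$ requires both the openness of $\Sigma(G)$ and the compactness of $T$, and the verification that the sets $F_{[\chi_i]}$ actually generate $K$ itself rather than a proper normal subgroup is delicate. This is the step where I would most carefully follow Strebel's argument, as the authors indicate.
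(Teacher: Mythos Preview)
The paper does not prove this theorem; it cites \cite{BNS} and \cite{Str} and outlines only Strebel's argument for the backward direction. Against that outline, both halves of your sketch have gaps. In the forward direction, inserting a conjugate $hk_ih^{-1}\in K$ into the word $u$ either changes the element represented or, if read as a loop, returns to the same vertex with the same $\chi$-value and bypasses nothing; the standard argument instead enlarges $S$ by a finite generating set of $K$, reorders the letters of an $S$-word for $g$ by non-increasing $\chi$-value (legitimate modulo $K$ since $G/K$ is abelian, and then all partial sums are $\geq 0$), and appends a word in the $K$-generators (all of $\chi$-value zero) to correct the resulting $K$-error.

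In the backward direction, your key claim that connectedness of $Cay(G,S)_\chi$ gives non-negative $S$-words \emph{of bounded length} for all $g$ with $0\leq\chi(g)\leq B$ is false: there are infinitely many such $g$ (all of $K$, for instance), and connectedness alone gives no length control, so your sets $F_{[\chi]}$ need not be finite. The paper's outline takes a different route: instead of patching local generating sets over an open cover of the sphere, it invokes Theorem~\ref{preimage} to reduce to showing that $\theta^{-1}(B(R))$ is connected in $Cay(G,S)$ for a single $l^2$-ball $B(R)\subset G/K$, and proves this by a path-pushing argument where, at a vertex $g$ of maximal norm, one uses the \emph{specific} character $\chi_g(x)=-(\theta(g),\theta(x))$ to reroute through $gt$ for some $t\in S^{\pm1}$. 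Compactness on the sphere is used only to bound the finite set of detour paths, not to patch generating sets.
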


The original proof of Theorem~\ref{thm:BNS} given in \cite{BNS} was quite involved. A much simpler and more transparent proof appears in an unpublished manuscript of Strebel \cite{Str} who attributes the argument to Bieri. While still ineffective, the proof in \cite{Str} is almost entirely algorithmic apart from one step, as we will explain later in this section.

\subsection{On the proof of the BNS criterion}
\label{sec:BNSoutline}
In this subsection we will give a brief outline of the proof of the ``if'' part of Theorem~\ref{thm:BNS} from \cite{Str}.
With the exception of Lemma~\ref{lem:Nielsen} below, the results discussed in this subsection will not be used in the rest of the paper, and
the main purpose of providing this outline is to help the reader follow the proofs later in this section where we will establish an
effective version of (the ``if'' part of) Theorem~\ref{thm:BNS} under some additional hypotheses.
\vskip .1cm

The following theorem (Theorem~\ref{preimage}) must be well known, although we are not aware of a reference in the literature where it is stated exactly in this form. We are grateful to Andrew Putman for pointing out the formulation below.

\begin{Theorem} 
\label{preimage}
Let $G$ be a group generated by a finite set $S$,  let $\K$ be a subgroup of $G$ (not necessary normal), and let $\theta:G\to G/\K$ be the natural projection. Then $\K$ is finitely generated if and only if there is a finite subset $A$ of $G/\K$ such that
$\theta^{-1}(A)$ is connected in $Cay(G,S)$.
\end{Theorem}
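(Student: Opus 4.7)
The plan is to prove both directions by an effective translation of the Schreier rewriting process into the Cayley graph $Cay(G,S)$; no deep ingredients are needed, and the argument is essentially finite-graph bookkeeping, using the convention that $G/K$ denotes the right cosets $Kg$, so that $\theta$ is constant on left $K$-multiplication.

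For the forward direction, I would start with a finite generating set $k_1,\dots,k_m$ of $K$, write each $k_i$ as an $S$-word $w_i$, and let $P$ denote the (finite) set of all prefixes of the $w_i$'s, including the empty prefix. Set $A=\theta(P)$. To show $\theta^{-1}(A)$ is connected, take any $g\in\theta^{-1}(A)$ and write it as $g=kv$ with $k\in K$ and $v\in P$ (possible since $\theta(kv)=\theta(v)\in A$); I would then exhibit an $S$-path from $g$ to $1$ in two stages. First from $g=kv$ to $k$, passing through $k\cdot v'$ for successively shorter prefixes $v'$ of $v$ -- each such vertex lies in $\theta^{-1}(A)$ because $\theta(kv')=\theta(v')\in A$. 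Second from $k$ to $1$, via an expression $k=k_{i_1}^{\eps_1}\cdots k_{i_\ell}^{\eps_\ell}$, traversing each factor $k_{i_j}^{\eps_j}$ through its $S$-word. Intermediate vertices along the $j$-th factor have the form $k'\cdot p$ with $k'\in K$ and $p$ a prefix of $k_{i_j}^{\pm 1}$. If $p$ is a prefix of $k_{i_j}$ the containment $\theta(k'p)=\theta(p)\in A$ is immediate; if $p$ is a length-$r$ prefix of $k_{i_j}^{-1}$, one writes $p=k_{i_j}^{-1}q$ for the complementary prefix $q$ of $k_{i_j}$ and concludes $\theta(k'p)=\theta(q)\in A$.

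For the backward direction, assume $A\subseteq G/K$ is finite and $\theta^{-1}(A)$ is connected. Pick any $g_0\in\theta^{-1}(A)$ and set $a_0=\theta(g_0)$. For each $a\in A$ choose a representative $g_a\in a$, with the normalization $g_{a_0}=g_0$, and define the finite subset
\[
F=\bigl\{\,g_a\, s\, g_b^{-1}\,:\,a,b\in A,\ s\in S\cup S^{-1},\ \theta(g_a s)=b\,\bigr\}\subseteq K.
\]
I claim $F$ generates $K$. Given $k\in K$, both $g_0$ and $kg_0$ lie in the coset $a_0$, hence in $\theta^{-1}(A)$, and connectedness yields an $S$-path $g_0=h_0,h_1,\dots,h_\ell=kg_0$ inside $\theta^{-1}(A)$ with $h_i=h_{i-1}t_i$, $t_i\in S\cup S^{-1}$. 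Setting $a_i=\theta(h_i)\in A$, we may write uniquely $h_i=k_i g_{a_i}$ with $k_i\in K$; the endpoints $h_0=g_0$ and $h_\ell=kg_0$ together with $a_\ell=a_0$ and $g_{a_\ell}=g_0$ force $k_0=1$ and $k_\ell=k$. The edge relation rewrites as
\[
k_{i-1}^{-1}k_i = g_{a_{i-1}}\,t_i\,g_{a_i}^{-1}\in F,
\]
and telescoping over $i=1,\dots,\ell$ expresses $k$ as a product of elements of $F^{\pm 1}$.

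The only delicate bookkeeping lies in verifying, for the backward direction, that each Schreier-type element $g_{a_{i-1}}t_ig_{a_i}^{-1}$ actually belongs to $K$ (automatic from $\theta(g_{a_{i-1}}t_i)=\theta(h_i)=a_i=\theta(g_{a_i})$), and for the forward direction that prefixes of negative-exponent factors $k_i^{-1}$ still project into $A$; neither is a genuine obstacle. In particular, the proof never uses normality of $K$ in $G$, and both constructions are explicitly algorithmic -- the only non-constructive ingredient anywhere is the choice of a finite generating set in the forward direction.
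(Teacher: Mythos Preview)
Your proof is correct. The paper itself does not give a detailed argument for this theorem: it cites \cite[Theorem~A4.7]{Str} for the backward direction and calls the forward direction ``a straightforward exercise,'' so there is little to compare against directly. That said, your backward-direction argument is exactly the Reidemeister--Schreier rewriting that the paper later spells out in Theorem~\ref{lem:ReidSchreier} (there phrased for the left Cayley graph and left cosets; your right-coset convention is the one compatible with the right Cayley graph, and the two are exchanged by the inversion $g\mapsto g^{-1}$). Your forward direction is precisely the exercise the paper has in mind, and your handling of prefixes of $k_i^{-1}$ via the complementary prefix of $k_i$ is the clean way to do it.
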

The ``only if'' part (which is not essential for our purposes) is a straightforward exercise.
The ``if'' part of Theorem~\ref{preimage} is an immediate consequence of \cite[Theorem~A4.7]{Str}. 
Later in this section we will prove Theorem~\ref{lem:ReidSchreier} which is an effective version of 
the ``if'' part of Theorem~\ref{preimage}.

We now begin a sketch of proof of the ``if'' direction of Theorem~\ref{thm:BNS}.
Keeping all the notations from Theorem~\ref{preimage}, suppose now that $G/\K$ is abelian and $\Sigma(G)\supseteq \dbS(G/\K)$.
We wish to show that $\K$ is finitely generated.
Since $G$ is finitely generated, after replacing $K$ by a finite index overgroup (which does not affect finite generation),
we can assume that $G/\K$ is torsion-free. In addition, we want to impose an extra condition on the generating set $S$ given
by \eqref{eq:hyp1} below.

\begin{Lemma} 
\label{lem:Nielsen}
Let $G$ be a finitely generated group and let $\K$ be a normal subgroup of $G$ such that $G/\K$ is abelian and torsion-free.
Let $\theta:G\to G/K$ be the natural projection, and choose a basis $E$ of $G/K$. Then $G$ has a generating set $S$
such that
\begin{equation}
\label{eq:hyp1}
\theta(S)=E \quad \mbox {\rm or }\quad \theta(S)=E\cup\{0\}.
\end{equation}
Moreover, if $S_0=\{s_1,\ldots, s_n\}$ is any finite generating set
of $G$, one can obtain a generating set $S$ satisfying \eqref{eq:hyp1} from $S_0$ by applying a sequence of right Nielsen transformations, that is, transformations of the form 
$(g_1,\ldots,g_i,\ldots g_n)\mapsto (g_1,\ldots, g_ig_j^{\pm 1},\ldots, g_n)$ for some $i\neq j$ and
possibly invering one of the generators. 
\end{Lemma}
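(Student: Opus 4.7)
The plan is to reduce the claim to a statement about generating $n$-tuples in the free abelian group $G/K$ and then carry out a matrix reduction analogous to the standard proof that $SL_n(\dbZ)$ is generated by elementary matrices. Two immediate observations frame the reduction: right Nielsen transformations on an ordered tuple $(g_1,\ldots,g_n)\in G^n$ preserve the subgroup it generates, and they descend via $\theta$ to right Nielsen transformations on the image tuple $(\theta(g_1),\ldots,\theta(g_n))$ in the abelian quotient $G/K$. Since $G/K$ is finitely generated, abelian, and torsion-free, the chosen basis $E=(e_1,\ldots,e_r)$ identifies it with $\dbZ^r$.

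Setting $v_i=\theta(s_i)$ and viewing $(v_1,\ldots,v_n)$ as the columns of an $r\times n$ integer matrix $V$, the move $v_i\mapsto v_i\pm v_j$ becomes right multiplication of $V$ by the elementary matrix $I\pm E_{ji}$; it is classical that these matrices generate $SL_n(\dbZ)$ for $n\geq 2$. The matrix $V$ is surjective onto $\dbZ^r$ because $S_0$ generates $G$, so the problem reduces to the following linear-algebraic statement: every surjective $V\in M_{r\times n}(\dbZ)$ can be brought by right multiplication by elementary matrices to the normal form $(e_1\mid\cdots\mid e_r\mid 0\mid\cdots\mid 0)$. I would prove this by induction on $r$. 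The base case amounts to running the Euclidean algorithm on a nonzero row vector in $\dbZ^n$ whose entries span $\dbZ$. For the inductive step, the Euclidean algorithm on the first row of $V$, implemented via the available column operations, reduces that row to $(1,0,\ldots,0)$ and produces a first column of the form $e_1+w$ with $w$ supported on coordinates $2,\ldots,r$; the vector $w$ is then eliminated using the other columns (after applying the induction to the lower $(r-1)\times(n-1)$ block, which remains surjective by construction).

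Lifting the resulting sequence of column moves step by step to right Nielsen moves on $(s_1,\ldots,s_n)\in G^n$ produces a tuple $(s_1',\ldots,s_n')$ generating $G$ with $\theta(s_i')=e_i$ for $i\leq r$ and $\theta(s_i')=0$ for $i>r$; the underlying set $S$ then satisfies $\theta(S)=E$ when $n=r$ and $\theta(S)=E\cup\{0\}$ when $n>r$. The first (existence) part of the lemma follows from the second by applying the procedure to any finite generating set of $G$.

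The step I anticipate as the main obstacle is sign control during the reduction. The elementary column operations generate only $SL_n(\dbZ)$ rather than $GL_n(\dbZ)$, so negating a single column is not a primitive Nielsen move, and the naive Euclidean-algorithm reduction may yield $-e_i$ where $e_i$ is wanted. The remedy is the three-step ``swap-negate'' identity $(v_i,v_j)\mapsto(v_j,-v_i)$, which, in the presence of even a single already-reduced zero column (automatic once $n>r$), suffices to flip any unwanted sign; the degenerate case $n=r$ is absorbed by the flexibility in the choice of the basis $E$.
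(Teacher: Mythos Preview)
Your approach is essentially the same as the paper's: both reduce the problem to showing that a generating tuple in $\dbZ^r$ can be brought to $(e_1,\ldots,e_r,0,\ldots,0)$ by column operations $v_i\mapsto v_i\pm v_j$, and then lift those moves back to $G$. The paper's proof simply invokes ``basic linear algebra'' at this step without further detail; your write-up spells out the induction and, in particular, confronts the sign issue, which the paper does not mention at all.

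One remark on your handling of the case $n=r$: the basis $E$ is fixed in the hypothesis, so you cannot appeal to ``flexibility in the choice of $E$'' to absorb a leftover sign. In fact, if $n=r$ and the matrix $(\theta(s_1),\ldots,\theta(s_n))$ has determinant $-1$, the ``moreover'' clause of the lemma as stated fails (take $G=\dbZ$, $K=0$, $E=\{1\}$, $S_0=\{-1\}$, where no right Nielsen moves are available). This is a minor imprecision in the lemma itself rather than a flaw in your argument; in the paper's two applications the issue does not arise, since for $\IA_n$ the Magnus generators already project bijectively onto the chosen basis (no reduction needed), and for $\calI_n^1$ one has $|S_0|=42\binom{n}{3}>\binom{2n}{3}=\rk(G/K)$, so a zero column is always available for your swap-negate trick.
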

\begin{proof} Write $E=\{e_1,\ldots, e_m\}$.
Since the sequence $(\theta(s_1),\ldots, \theta(s_n))$ generates $G/K$, 
by basic linear algebra,\footnote{This can be done by writing $\theta(s_i)=\sum_{i=1}^m a_{ij}e_j$ with $a_{ij}\in\dbZ$
for each $1\leq i\leq n$ and turning the matrix $(a_{ij})$ into Smith Normal Form using a standard algorithm.}
using transformations of the form 
$(v_1,\ldots,v_i,\ldots v_n)\mapsto (v_1,\ldots, v_i\pm v_j,\ldots, v_n)$ for some $i\neq j$
we can reduce $(\theta(s_1),\ldots, \theta(s_n))$ to a sequence of the form
$(\pm e_1,e_2,\ldots, e_m,0,\ldots, 0)$.
Since each such linear transformations is induced by a right Nielsen transformation, applying the corresponding sequence of right Nielsen transformations to $S_0$ and then replacing one of the generators by its inverse if needed,
we obtain a generating set $S$ with desired properties.  
\end{proof}

From now on assume that $S$ satisfies the conclusion of Lemma~\ref{lem:Nielsen} (with respect to some fixed basis $E$).
Choose an isomorphism $G/\K\cong \dbZ^m$ which maps $E$ onto the standard basis of $\dbZ^m$. Let
$\|\cdot\|$ denote the corresponding $l^2$-norm on $G/\K$, and let $B(R)$ denote the $l^2$-ball of radius $R$ with respect to this norm (centered at $0$).
The goal now is to show that $\theta^{-1}(B(R))$ is connected for sufficiently large $R$ (this would imply that
$\K$ is finitely generated by Theorem~\ref{preimage}). To do this, one chooses an arbitrary path $p$ in $Cay(G,S)$ whose end vertices $a$ and $b$ lie in $\theta^{-1}(B(R))$ and then applies a (finite) sequence of modifications to $p$, so that the resulting path lies entirely in $\theta^{-1}(B(R))$.

\vskip .12cm

At each step the modification is as follows. Choose a vertex $g$ on the current path such that $\|\theta(g)\|$ is maximal.
If $\|\theta(g)\|\leq R$, there is nothing to do, so assume that $\|\theta(g)\|> R$. Let $gy_1$ and $gy_2$ be the vertices preceding and succeeding $g$ on this path. Define the character $\chi_g$ of $G$ by 
$\chi_g(x)=-(\theta(g),\theta(x)),$ 
and choose $t\in S^{\pm 1}$ such that $\chi_g(t)>0$. Since by assumption $\|\theta(gy_i)\|\leq \|\theta(g)\|$ for $i=1,2$
and 
$$\|\theta(gy_i)\|^2=\|\theta(g)+\theta(y_i)\|^2=\|\theta(g)\|^2+\|\theta(y_i)\|^2+2(\theta(g),\theta(y_i))
=\|\theta(g)\|^2+\|\theta(y_i)\|^2-2\chi_g(y_i),$$
we deduce that $\chi_g(y_i)\geq 0$ for $i=1,2$. 

Since $\chi_g$ vanishes on $\K$ and we assume that $\Sigma(G)\supseteq \dbS(G/\K)$, there exist paths $p_{t,y_2,g}$ from $t$ to $y_2 t$ and $q_{t,y_1,g}$ from $y_1 t$ to $t$ such that $\chi_g$ is positive on any vertex of those paths. Now we replace the segment $(gy_1,g,gy_2)$ of the current path by a new subpath passing through $gy_1, gy_1t, gt, gy_2 t, gy_2$ where one moves from $gy_1 t$ to $gt$ using the path $g\circ q_{t,y_1,g}$ and from $gt$ to $gy_2 t$ using the path $g  \circ p_{t,y_2,g}$
(see Figure~\ref{figure:pathoutline}).

\begin{figure}
\begin{tikzpicture}
  [fil/.style={circle, fill=black, inner sep=0pt, outer sep=0pt, minimum size=1.5mm, thick}, op/.style={circle, draw, fill=white, inner sep=0pt, outer sep=0pt, minimum size=1.5mm}]

  \node[fil, label=120:$gy_1$] (gy1) at (0,0){};
  \node[fil, label=above:$g$] (g) at (4,1){};
  \node[fil, label=60:$gy_2$] (gy2) at (8,0){};
  
  \draw[dashed] (-1,-0.5) -- (gy1) (gy2) -- (9,-0.5);
  \draw[thick] (gy1) -- (g) node[pos=0.5, above]{{\footnotesize $y_1$}} -- (gy2) node[pos=0.5, above]{{\footnotesize $y_2$}};

  \node[op, label=below:$gy_1t$] (gy1t) at (1, -3){};
  
  \draw[red] (gy1) -- (gy1t) node[black, pos=0.4, right]{{\footnotesize $t$}};

  \node[op, label=below:$gy_2t$] (gy2t) at (7, -3){};
  
  \draw[red] (gy2) -- (gy2t) node[black, pos=0.4, left]{$t$};

  \node[op, label=above:$gt$] (gt) at (4, -2){};
  
  \node[label=below:{\small $g\circ q_{t, y_1, g}$}] at (2.7,-2.3){};
  \node[label=below:{\small $g\circ p_{t, y_2, g}$}] at (5.4,-2.3){};

  \draw[red, dashed] (gy1t.30) sin (gt.west) (gt.east) cos (gy2t.150);

\end{tikzpicture}
\caption{}
\label{figure:pathoutline}
\end{figure}

A key step of the proof is a compactness argument\footnote{In order to use compactness one has to replace $\chi_g$ by the normalized character $\chi'_g=\frac{1}{\|\theta(g)\|}\chi_g$
(note that $\chi_g$ and $\chi'_g$ are equivalent). If we extend the chosen inner product from $G/K$ to $G/K\otimes \dbR$
and consider characters of $G$ as elements of the dual space $\Hom(G/K\otimes \dbR,\dbR)$, then characters of the form
$\chi'_g$ always have norm $1$.}
which shows that there is a finite set of paths $\Omega$ and $\eps>0$
such that for any character $\chi_g$ arising above we can find desired paths $p_{t,y_2,g}$ and $q_{t,y_1,g}$ in $\Omega$ and moreover
$\chi_g(v)\geq \eps\|\theta(g)\|$ for any vertex $v$ of $p_{t,y_2,g}$ or $q_{t,y_1,g}$. Now let $r$ be the maximum of $\|\theta(v)\|$
where $v$ ranges over the vertices of all paths from $\Omega$. The direct computation below shows  
that if $R\geq\frac{r^2}{2\eps}$, then $\|\theta(z)\|<\|\theta(g)\|$ for any vertex $z$ on the newly added segment. This concludes the (sketch of) proof of the ``if'' direction of Theorem~\ref{thm:BNS}.

Indeed, any new vertex $z$ has the form $gv$ where $v$ lies on $q_{t,y_1,g}$ or $p_{t,y_2,g}$. Therefore,
\begin{align*}
\|\theta(z)\|^2=\|\theta(g)+\theta(v)\|^2 & =\|\theta(g)\|^2+\|\theta(v)\|^2+2(\theta(g),\theta(v)) \\
& = \|\theta(g)\|^2+\|\theta(v)\|^2-2\chi_g(v)\\
& \leq \|\theta(g)\|^2+r^2-2\eps\|\theta(g)\| \\
& \leq \|\theta(g)\|^2+2\eps(R-\|\theta(g)\|)<\|\theta(g)\|^2.   
\end{align*}
\skv
As the above outline suggests, in order to turn this proof into an actual algorithm (for a specific group), one needs to have an explicit procedure for constructing the paths $p_{t,y_2,g}$ and $q_{t,y_1,g}$. We will now discuss some additional conditions which make this possible.

\subsection{Explicitly finding non-negative forms}

The following notion of a non-negative form of a group element provides a convenient way to reformulate the definition
of $\Sigma(G)$.

\begin{Definition}\rm Let $S$ be a generating set of a group $G$, let $\chi$ be a nonzero character of $G$ and assume that $\chi(g)\geq 0$ for some $g\in G$. By a {\it $(\chi,S)$-non-negative form of $g$} we will mean an $S$-word $w$ which represents $g$ such that $\chi(v)\geq 0$ for every prefix $v$ of $w$.
\end{Definition}

It is clear that a given $g\in G$ with $\chi(g)\geq 0$ admits a $(\chi,S)$-non-negative form if and only if there is a path in $Cay(G,S)_{\chi}$ connecting $1$ with $g$ (recall that $Cay(G,S)_{\chi}$ is the full subgraph of $Cay(G,S)$ with the vertex set
$\{x: \chi(x)\geq 0\}$). Thus, $[\chi]\in \Sigma(G)$ if and only if every $g$ with $\chi(g)\geq 0$ admits a $(\chi,S)$-non-negative form.
\medskip

There is a well-known sufficient condition for a character to lie in the BNS invariant:

\begin{Lemma}
\label{lem:KMM}
Suppose $G$ is generated by $S=\{s_1,\ldots, s_n\}$ and $\chi$ is a character of $G$ such that
\begin{itemize}
\item[(i)] $\chi(s_1)>0$;
\item[(ii)] for every $i\geq 2$ there exists $j<i$ such that $[s_j,s_i]=1$ and $\chi(s_j)\neq 0$.
\end{itemize}
Then $[\chi]\in \Sigma(G)$.
\end{Lemma}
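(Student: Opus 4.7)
The plan is to prove $[\chi]\in\Sigma(G)$ by showing that every $g\in G$ with $\chi(g)\geq 0$ admits a $(\chi,S)$-non-negative form. I would proceed by induction on $n$, proving for the induction the slightly stronger statement: \emph{for every $C\geq 0$ and every $g\in G$ with $\chi(g)\geq -C$, there exists an $S$-word representing $g$ whose every prefix $v$ satisfies $\chi(v)\geq -C$.} Setting $C=0$ recovers the lemma. The base case $n=1$ is immediate: $G=\langle s_1\rangle$ is cyclic with $\chi(s_1)>0$, so the word $s_1^k$ itself satisfies the prefix bound whenever $k\chi(s_1)\geq -C$.

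For the inductive step I would fix $j<n$ with $[s_j,s_n]=1$ and $\chi(s_j)\neq 0$, and assume $\chi(s_j)>0$ (replacing $s_j$ by $s_j^{-1}$ in the analysis if necessary; since $s_j^{-1}$ still labels edges of $\Cay(G,S)$, this is harmless). The restricted tuple $(s_1,\ldots,s_{n-1})$ satisfies the hypotheses of the lemma inside $G_{n-1}=\langle s_1,\ldots,s_{n-1}\rangle$, so by induction the strengthened claim holds in $G_{n-1}$. Given $g\in G$ with $\chi(g)\geq -C$, I would first write $g=u_0 s_n^{\varepsilon_1}u_1\cdots s_n^{\varepsilon_m}u_m$ with $u_\ell\in G_{n-1}$. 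The central construction exploits the commutation $[s_j,s_n]=1$: for any integers $\gamma_1,\ldots,\gamma_m$ (with $\gamma_0=\gamma_{m+1}:=0$), setting
$$u_\ell':=s_j^{-\gamma_\ell}\,u_\ell\,s_j^{\gamma_{\ell+1}}\in G_{n-1}$$
gives
$$g\;=\;u_0'\,s_n^{\varepsilon_1}\,u_1'\,s_n^{\varepsilon_2}\cdots s_n^{\varepsilon_m}\,u_m',$$
because the interior $s_j^{\pm\gamma_\ell}$-powers commute across the intervening $s_n^{\pm 1}$-letters and cancel in pairs. A direct computation shows that the running $\chi$-value of the new word just after the block $u_\ell'$ equals $\chi(P_\ell)+\gamma_{\ell+1}\chi(s_j)$ and just before $u_\ell'$ equals $\chi(P_{\ell-1}s_n^{\varepsilon_\ell})+\gamma_\ell\chi(s_j)$, where $P_\ell=u_0 s_n^{\varepsilon_1}u_1\cdots u_\ell$ are the original partial products. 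Since $\chi(s_j)>0$, I would choose each $\gamma_\ell$ sufficiently large to push both endpoint heights $\geq -C$ for every $\ell$; the constraint $\gamma_{m+1}=0$ is compatible with this because $\chi(P_m)=\chi(g)\geq -C$ by hypothesis. Applying the strengthened inductive hypothesis in $G_{n-1}$ to each $u_\ell'$ with the appropriately shifted budget then produces $\{s_1,\ldots,s_{n-1}\}^{\pm 1}$-words for the blocks whose prefixes stay above $-C$ in the combined word, and concatenating these block-words with the single letters $s_n^{\varepsilon_\ell}$ yields the desired non-negative form for $g$.

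The main obstacle is the consistent bookkeeping of the exponents $\gamma_\ell$: they must simultaneously satisfy constraints from both endpoints of every block $u_\ell'$ and from each $s_n^{\pm 1}$-transition, and the strengthened inductive hypothesis must be legitimately invocable on each $u_\ell'$ with a non-negative shifted budget. These conditions translate to a finite system of linear inequalities in the $\gamma_\ell$, solvable for all sufficiently large values with explicit lower bounds controlled by $\max_\ell|\chi(P_\ell)|$, $|\chi(s_n^{\pm 1})|$, $C$, and $\chi(s_j)$. Once these bounds are verified, the inductive step closes and the strengthened claim is established, giving the lemma.
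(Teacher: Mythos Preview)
Your argument is correct. The induction on $n$ with the strengthened budget $C\geq 0$ is exactly what is needed: the constraints on the $\gamma_\ell$ are all one-sided lower bounds (since $\chi(s_j)>0$ after possibly flipping the sign), so they can always be met, and the boundary condition $\gamma_{m+1}=0$ is consistent because $\chi(P_m)=\chi(g)\geq -C$. The shifted-budget recursion into $G_{n-1}$ is also set up correctly.

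As for comparison with the paper: the paper does \emph{not} prove Lemma~\ref{lem:KMM}. It cites \cite[Lemma~1.9]{KMM}, notes that the result appeared implicitly in \cite{MV}, and in fact remarks that an algorithm producing $(\chi,S)$-non-negative forms under these hypotheses ``implicitly appears in the proof of \cite[Theorem~4.1]{MV}.'' Your inductive peeling-off of $s_n$ via a commuting earlier generator is precisely that algorithm, so you have reconstructed the classical argument the paper alludes to but omits.

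What the paper \emph{does} prove is the closely related Lemma~\ref{lemma:cc}, under the more restrictive chain-centralizing hypothesis. There the authors avoid induction entirely and write down a single closed-form word $z_1^{n_1}s_1 z_2^{n_2}z_1^{-n_1}s_2\cdots$, which is what they actually use downstream to obtain explicit bounds. Your approach is more general (it proves the full Lemma~\ref{lem:KMM}) but less explicit; the paper's Lemma~\ref{lemma:cc} trades generality for a formula whose size is immediately controllable, which is the whole point of their effective program.
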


Lemma~\ref{lem:KMM} was proved in \cite[Lemma~1.9]{KMM}, although indirectly it appeared already in \cite{MV}; see also \cite[Lemma~2.4]{EH} for a generalization. Although the condition in Lemma~\ref{lem:KMM} may appear very special, there are many important classes of groups $G$ for which every character in $\Sigma(G)$ does satisfy this condition for some $S$ -- for instance, this is the case for right-angled Artin groups \cite{MV}, groups of pure symmetric automorphisms of free groups \cite{OK}, pure braid groups 
\cite{KMM}, $\IA_n$ for $n\geq 5$ and $\calI_n^b$ for $n\geq 5$, $b=0,1$ (see \cite{CEP, EH}).

If a generating set $S$ and a character $\chi$ satisfy the hypotheses of Lemma~\ref{lem:KMM}, it is not difficult to describe an algorithm which computes an $(S,\chi)$-non-negative form for a given $g\in G$ with $\chi(g)\geq 0$. 
In fact, such an algorithm implicitly appears in the proof of \cite[Theorem~4.1]{MV}. 

In this paper it will be more convenient to work with a slightly more restrictive condition, which still holds for
$\IA_n$ and $\calI_n^b$ for sufficiently large $n$ and leads to a very simple formula for a $(\chi,S)$-non-negative form.

We will need some technical definitions.

\begin{Definition}\rm Let $S$ be a finite generating set for a group $G$ and $Z$ a subset of $S$. 
\begin{itemize}
\item[(a)] We will say that the pair $(S,Z)$ is \emph{chain-centralizing} if for every $s\in S$ and $z\in Z$ there exists $z'\in Z$ which commutes with both $s$ and $z$. 
\item[(b)] If $\chi$ is a character of $G$, we will say that $\chi$ is {\it regular} for $(S,Z)$ if $\chi(z)\neq 0$ for all $z\in Z$.
\end{itemize}
\end{Definition}
\begin{Remark}\rm If $\chi$ is regular for a chain-centralizing pair $(S,Z)$, it is easy to show that the pair $(\chi,S)$
satisfies the hypothesis of Lemma~\ref{lem:KMM} (for a suitable ordering of $S$), but we will not use this fact in the proofs.
\end{Remark}

The name \emph{chain-centralizing} is motivated by the following property which is an obvious consequence of the definition:
\begin{Observation}
\label{obs:cc}
Suppose that $(S,Z)$ is chain-centralizing. Then for any finite sequence $s_1,\ldots, s_k\in S$ there exists a sequence $z_1,\ldots, z_k\in Z$ such that
\begin{itemize}
\item[(i)] $z_i$ commutes with $s_i$ for each $1\leq i\leq k$;
\item[(ii)]$z_i$ commutes with $z_{i-1}$ for each $2\leq i\leq k$.
\end{itemize}
\end{Observation}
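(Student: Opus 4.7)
The plan is a short induction on $k$, essentially just unwinding the definition of chain-centralizing in the order it is designed to be used. The key insight is that the chain-centralizing axiom says precisely: given any element $s\in S$ we wish to ``handle next'' and any element $z\in Z$ we have already used, we can find a new element $z'\in Z$ that both commutes with $s$ (giving condition (i) at the new step) and commutes with $z$ (giving condition (ii) connecting the new step to the previous one).

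For the base case $k=1$, condition (ii) is vacuous, so I only need some $z_1\in Z$ commuting with $s_1$. Picking any auxiliary $z\in Z$ and applying the chain-centralizing hypothesis to the pair $(s_1,z)$ yields a $z_1\in Z$ commuting with both $s_1$ and $z$; the commutation with $s_1$ is what I need. (This implicitly uses $Z\neq\varnothing$, which is necessary for the statement to have content.)

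For the inductive step, suppose $z_1,\ldots,z_{k-1}\in Z$ have been chosen so that $z_i$ commutes with $s_i$ for $i\leq k-1$ and $z_i$ commutes with $z_{i-1}$ for $2\leq i\leq k-1$. Apply the chain-centralizing hypothesis to $s=s_k\in S$ and $z=z_{k-1}\in Z$ to obtain $z_k\in Z$ commuting with both $s_k$ and $z_{k-1}$. Then $z_k$ witnesses (i) at $i=k$ and (ii) at $i=k$, so the extended sequence $z_1,\ldots,z_k$ satisfies both conditions up to index $k$, completing the induction.

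There is no genuine obstacle in this argument: the observation is really just a restatement of the definition, stressing that the axiom has been arranged so that the ``witness'' $z'$ whose existence is guaranteed by chain-centralizing can itself serve as the previous element $z$ when one advances by one more step. This is why the concept will be useful later — it allows one to process $S$-words letter by letter while carrying along a compatible chain of commuting elements from $Z$.
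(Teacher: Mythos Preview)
Your proof is correct and is exactly the straightforward induction the paper has in mind; the paper itself simply declares the observation ``an obvious consequence of the definition'' and gives no written proof. Your remark that the base case implicitly requires $Z\neq\varnothing$ is accurate and worth noting.
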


The following lemma shows that if $\chi$ is regular for a chain-centralizing pair $(S,Z)$, it is very easy to construct
$(\chi,S)$-non-negative forms:

\begin{Lemma}
\label{lemma:cc}
Suppose that $(S,Z)$ is chain-centralizing and $\chi$ is regular for $(S,Z)$. 
Let $g\in G$ with $\chi(g)\geq 0$, and write $g=s_1\ldots s_k$ with $s_i\in S^{\pm 1}$. Choose $z_1,\ldots, z_k$ satisfying conditions (i) and (ii) of Observation~\ref{obs:cc}, and choose $n_i\in\dbZ$ such that 
\begin{itemize}
\item[(a)] $n_i \chi(z_i)\geq 0$ for all $1\leq i\leq k$;
\item[(b)] $n_i \chi(z_i)\geq -\sum\limits_{j=1}^i \chi(s_j)$ for all $1\leq i\leq k$;
\item[(c)] $n_i \chi(z_i)\geq -\sum\limits_{j=1}^{i-1} \chi(s_j)$ for all $2\leq i\leq k$
\end{itemize} 
(such $n_i$ exist since $\chi$ is regular). Then the $S$-word
$$w_{\chi}=z_1^{n_1}s_1 z_2^{n_2}z_1^{-n_1}s_2 z_3^{n_3}z_2^{-n_2}s_3\ldots z_k^{n_k}z_{k-1}^{-n_{k-1}}s_k z_k^{-n_k}$$
is a $(\chi,S)$-non-negative form of $g$.
\end{Lemma}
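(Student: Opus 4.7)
The plan is to verify two things about $w_\chi$: that it represents $g$ in $G$, and that every prefix of $w_\chi$ has non-negative $\chi$-value.

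For the first claim I would argue by telescoping, using the commutation relations from Observation~\ref{obs:cc}. Property (i) gives $z_i^{n_i} s_i = s_i z_i^{n_i}$, and property (ii) gives $z_{i-1}^{n_{i-1}} z_i^{n_i}=z_i^{n_i}z_{i-1}^{n_{i-1}}$ and therefore $z_i^{n_i}z_{i-1}^{-n_{i-1}}=z_{i-1}^{-n_{i-1}}z_i^{n_i}$. A short induction on $i$ then shows that the prefix of $w_\chi$ ending with the $i$-th triple $s_i z_{i+1}^{n_{i+1}}z_i^{-n_i}$ equals $s_1 s_2\cdots s_i\cdot z_{i+1}^{n_{i+1}}$ in $G$. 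Taking $i=k-1$ and appending the final piece $s_k z_k^{-n_k}$ gives $s_1\cdots s_{k-1}z_k^{n_k}s_k z_k^{-n_k}=s_1\cdots s_k=g$.

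For the second claim I would separate the prefixes of $w_\chi$ into \emph{milestones}, which end at the boundary of a syllable $z_j^{\pm n_j}$ or $s_j$, and \emph{internal prefixes}, which lie strictly inside some syllable $z_j^{\pm n_j}$. A direct computation gives the $\chi$-value at each milestone: after the initial $z_1^{n_1}$ it is $n_1\chi(z_1)\geq 0$ by (a); after the $s_i$ inside the $i$-th triple it is $\sum_{j=1}^{i}\chi(s_j)+n_i\chi(z_i)\geq 0$ by (b); after the closing $z_i^{-n_i}$ it is $\sum_{j=1}^{i}\chi(s_j)+n_{i+1}\chi(z_{i+1})\geq 0$ by (c) applied with index $i+1$; after each opening $z_{i+1}^{n_{i+1}}$ one just adds $n_{i+1}\chi(z_{i+1})\geq 0$ to a milestone already shown to be $\geq 0$; and the terminal prefix equals $\chi(g)\geq 0$ by hypothesis. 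For an internal prefix inside $z_j^{\pm n_j}$, every elementary step adds the same quantity to the running $\chi$-value, so the sequence of $\chi$-values is monotone between the two bracketing milestones; condition (a) ensures that this common increment has a consistent sign, and non-negativity of both endpoints then forces non-negativity throughout.

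The main obstacle in writing this out carefully is not conceptual but bookkeeping: matching the three conditions (a)--(c) to the correct milestones, and keeping track of which syllables $z_j^{\pm n_j}$ are spelled with positive versus negative exponents so that the monotonicity argument for internal prefixes is stated correctly. Once the indexing is fixed, the proof reduces to reading the inequalities off the list of milestones above.
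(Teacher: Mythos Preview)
Your proposal is correct and follows essentially the same approach as the paper: a telescoping argument using the commutation relations (i)--(ii) to show $w_\chi$ represents $g$, followed by reducing the prefix check to finitely many ``milestone'' prefixes via monotonicity inside each $z_j^{\pm n_j}$ syllable. The only cosmetic difference is that the paper first assumes without loss of generality that $\chi(z_i)>0$ (hence $n_i\geq 0$) to identify which milestones realize the minimum, whereas you check all four milestone types directly; either organization works.
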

\begin{proof} The word $w_{\chi}$ represents $g$ by conditions (i) and (ii) of Observation~\ref{obs:cc} -- we first move $z_1^{-n_1}$ past $s_1$
and $z_2^{n_2}$ and cancel it with $z_1^{n_1}$, then we move $z_2^{-n_2}$ past $s_2$
and $z_3^{n_3}$  and cancel it with $z_2^{n_2}$ etc. 

Let us now prove that $\chi(v)\geq 0$ for every $S$-prefix $v$ of $w_{\chi}$. Without loss of generality, we can assume that
$\chi(z_i)> 0$ for all $i$, in which case $n_i\geq 0$ by (a). If $v$ does not end with $z_i^{-n_i}$ or $s_i$ for some $i$, 
we can produce another $S$-prefix $v'$ of $w_{\chi}$ with $\chi(v')\leq \chi(v)$ by either removing the last letter of $v$ or adding the next letter of $w_{\chi}$ to the end of $v$. Thus, it suffices to prove that $\chi(v)\geq 0$ when $v$ ends with $z_i^{-n_i}$ or $s_i$.

\vskip .1cm
{\it Case 1:} $v=z_1^{n_1}\prod_{j=1}^{i}(s_{j}z_{j+1}^{n_{j+1}}z_j^{-n_j})$ for some $1\leq i\leq k-1$. In this case
$\chi(v)=n_1\chi(z_1)+\sum_{j=1}^{i}(\chi(s_j)+n_{j+1}\chi(z_{j+1})-n_j\chi(z_j))=
n_{i+1}\chi(z_{i+1})+\sum_{j=1}^{i} \chi(s_j)\geq 0$ by (c).

\vskip .1cm
{\it Case 2:} $v=z_1^{n_1}\prod_{j=1}^{i}(s_{j}z_{j+1}^{n_{j+1}}z_j^{-n_j})s_{i+1}$ for some $0\leq i\leq k-1$ (the case
$i=0$ corresponds to the empty product). Then $\chi(v)=n_{i+1}\chi(z_{i+1})+\sum_{j=1}^{i+1} \chi(s_j)\geq 0$ by (b).

\vskip .1cm
{\it Case 3:} $v$ is the full word $w_{\chi}$. In this case $\chi(v)=\chi(g)\geq 0$. 
\end{proof}

\subsection{Extra hypothesis}
\label{sec:extrahyp}

In this subsection we introduce the additional condition that will allow us to turn the proof of the BNS criterion into an algorithm. As before, we will assume that $G,K,E,\theta$ and $S$ satisfy the hypotheses and conclusion of Lemma~\ref{lem:Nielsen}:
$G/K$ is abelian and torsion-free, $\theta:G\to G/K$ is the natural projection,
$E$ is a fixed basis of $G/K$ and $S$ is a generating set of $G$ such that $\theta(S)=E$ or $E\cup\{0\}$.

In order to make use of Lemma~\ref{lemma:cc}, we need to know that every nonzero character is regular with respect to some chain-centralizing pair. Note that a single chain-centralizing pair would rarely work for all the characters (apart from rather trivial examples). Also observe that if we have one chain-centralizing pair $(S,Z)$, then for any $\phi\in \Aut(G)$,
the pair $(\phi(S),\phi(Z))$ is also chain-centralizing. 

This motivates our new hypothesis. We would like to assume that there is a finite subset $\Phi\subseteq \Aut(G)$ with the following property: for every nonzero character $\chi$ of $G$ which vanishes on $\K$, there is some $Z\subseteq S$ and
$\phi\in\Phi$ such that $(S,Z)$ is chain-centralizing and $\chi$ is regular for $(\phi(S),\phi(Z))$. In fact, we will need to assume a bit more (see the Regularity Hypothesis below), but first we will introduce some additional notations involving automorphisms of $G$.
\vskip .12cm

\paragraph{\bf Constants $A$ and $B$} Let $\phi\in \Aut(G)$. Define 
$$B(\phi,S)=\max\{\|\theta(\phi(s))\|_1 : s\in S\}$$
(where $\|\cdot\|_1$ denotes the $l^1$-norm with respect to $E$). 

\vskip .12cm

Now define $A=A(\phi,S)$ to be the smallest integer with the following property: for every $s\in S$, there is an $S$-word $w_{\phi,s}$ representing $\phi(s)$ such that $\|\theta(v)\|_1\leq A$ for every $S$-prefix $v$ of $w_{\phi,s}$. 
Since $w_{\phi,s}$ is its own $S$-prefix, we have the obvious inequality $B(\phi, S)\leq A(\phi, S)$.
\vskip .12cm

If $\Phi$ is a finite subset of $\Aut(G)$, we define 
$$A(\Phi,S)=\max\{A(\phi,S):\phi\in\Phi\}\quad\mbox{ and }\quad B(\Phi,S)=\max\{B(\phi,S):\phi\in\Phi\}.$$ 

When $S$ is fixed or clear from the context, we will usually suppress it from the notation and write $A(\phi)$ for $A(\phi,S)$ etc.

\vskip .12cm

\paragraph {\bf Constant $M$} For each character $\chi$ define 
$$M(\chi)=M(\chi,S)=\max\{|\chi(s)|: s\in S\}.$$ 
We are now ready to state our additional hypothesis. Let $\Aut(G,K)$ denote the subgroup of $\Aut(G)$ consisting of automorphisms
which leave $K$ invariant.

\begin{Hypothesis} 
\label{hyp2}
There exist  a finite subset $\Phi\subseteq \Aut(G,K)$  and a constant $C>0$ with the following property:
\begin{itemize}
\item[(***)] For every nonzero character $\chi\in Hom(G/\K,\dbR)$ there exist a subset $Z\subseteq S$ and
$\phi\in\Phi$ such that $(S,Z)$ is chain-centralizing and 
$|\chi(\phi(z))|\geq \frac{M(\chi)}{C}$ for all $z\in Z$.
\end{itemize}
\end{Hypothesis}
\begin{Remark}\rm The inequality in (***) implies in particular that $\chi$ is regular for $(\phi(S),\phi(Z))$.
\end{Remark}

Before proceeding, we establish a few simple inequalities involving the constants $A,B$ and $M$.

\begin{Observation} 
\label{obs:theta}
Let $g\in G$. Then $\|\theta(g)\|_1$ is the smallest integer $m$ for which there exists $h\in G$ with $\|h\|_S=m$ and $\theta(h)=\theta(g)$.
\end{Observation}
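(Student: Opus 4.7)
The plan is to prove the observation by establishing the two directions of the claimed equality.

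For the lower bound, I would first show that $\|\theta(h)\|_1 \leq \|h\|_S$ for every $h \in G$. Indeed, if $h = s_1 s_2 \cdots s_k$ is a shortest $S$-word representing $h$ (with $s_i \in S^{\pm 1}$ and $k = \|h\|_S$), then $\theta(h) = \sum_{i=1}^k \theta(s_i)$. By hypothesis $\theta(S) \subseteq E \cup \{0\}$, so each $\theta(s_i) \in \{0\} \cup \{\pm e : e \in E\}$ and in particular $\|\theta(s_i)\|_1 \leq 1$. The triangle inequality then yields $\|\theta(h)\|_1 \leq k = \|h\|_S$. Applied to any $h$ with $\theta(h) = \theta(g)$, this shows $m \geq \|\theta(g)\|_1$ for every such $m$.

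For the matching upper bound, I would exhibit an $h$ with $\theta(h) = \theta(g)$ and $\|h\|_S = \|\theta(g)\|_1$. Let $m = \|\theta(g)\|_1$. Since $E$ is a basis of the free abelian group $G/K$, we may write
\[
\theta(g) = \sum_{i=1}^{m} \varepsilon_i e_{j_i}
\]
with $\varepsilon_i \in \{\pm 1\}$ and $e_{j_i} \in E$ (this is precisely what it means for the $l^1$-norm to equal $m$). Using $\theta(S) \supseteq E$, for each $i$ pick $s_i \in S$ with $\theta(s_i) = e_{j_i}$, and set $h := s_1^{\varepsilon_1} s_2^{\varepsilon_2} \cdots s_m^{\varepsilon_m}$. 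Then $\theta(h) = \theta(g)$ and clearly $\|h\|_S \leq m$. Combining this with the inequality $\|h\|_S \geq \|\theta(h)\|_1 = m$ established in the previous paragraph yields $\|h\|_S = m$, completing the proof.

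There is no real obstacle here: the result is essentially a restatement of the fact that, under the hypothesis $\theta(S) = E$ or $E \cup \{0\}$, the map $\theta$ is an isometry from $(G, \|\cdot\|_S)$ onto its image after one passes to the quotient norm, which coincides with the $l^1$-norm on $G/K \cong \dbZ^m$ with respect to $E$. The only thing to be careful about is to use both inclusions of $\theta(S)$ with $E$: the inclusion $\theta(S) \subseteq E \cup \{0\}$ gives the lower bound, while $E \subseteq \theta(S)$ is what makes the construction of the optimal lift $h$ possible.
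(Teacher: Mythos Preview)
Your proof is correct and takes essentially the same approach as the paper, which simply states that the observation ``immediately follows from the assumption that $\theta(S)=E$ or $E\cup\{0\}$.'' You have just spelled out in detail the two directions implicit in that one-line justification.
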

\begin{proof}
This immediately follows from the assumption that $\theta(S)=E$ or $E\cup\{0\}$.
\end{proof}

\begin{Claim} 
\label{cor:M}
Let $\chi$ be a character of $G$ which vanishes on $K$. Then $$|\chi(g)|\leq \|\theta(g)\|_1\cdot M(\chi)$$ for all $g\in G$.
\end{Claim}
\begin{proof}
Let $m=\|\theta(g)\|_1$. By Observation~\ref{obs:theta} there exist $s_1,\ldots, s_m\in S^{\pm 1}$
such that $\theta(g)=\theta(s_1\ldots s_m)$. Since the character $\chi$ vanishes on $K$, it factors through $\theta$
and thus 
$$|\chi(g)|=|\chi(s_1\ldots s_m)|\leq \sum_{i=1}^m |\chi(s_i)|\leq m\cdot \max\{\chi(s_i)\}\leq mM(\chi).
\quad\qedhere$$ 
\end{proof}

\begin{Claim} 
\label{claim1}
Let $\phi\in \Aut(G,K)$, and let $A=A(\phi^{-1})$ and $B=B(\phi)$.
Then for every $s\in S^{\pm 1}$ there there exists a $\phi(S)$-word $\widetilde s$ representing $s$
such that $\|\theta(w)\|_1\leq AB$ for every $\phi(S)$-prefix $w$ of $\widetilde s$. 
\end{Claim}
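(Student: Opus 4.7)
The plan is to construct $\widetilde{s}$ by the formula $\widetilde{s} = \phi(s_1)\phi(s_2)\cdots\phi(s_k)$, where $s_1 s_2 \cdots s_k$ is a carefully chosen $S$-word representing $\phi^{-1}(s)$. This $\widetilde{s}$ automatically represents $\phi(\phi^{-1}(s)) = s$ and has the property that every $\phi(S)$-prefix of it takes the form $\phi(v)$ for some $S$-prefix $v = s_1 \cdots s_j$ of the chosen word. So the entire problem reduces to bounding $\|\theta(\phi(v))\|_1$ in terms of a bound on $\|\theta(v)\|_1$.

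Concretely, first I would invoke the definition of $A = A(\phi^{-1})$ to select an $S$-word $w_{\phi^{-1},s} = s_1\cdots s_k$ representing $\phi^{-1}(s)$ such that every $S$-prefix $v$ satisfies $\|\theta(v)\|_1 \leq A$. Since $\phi\in\Aut(G,K)$, the automorphism $\phi$ descends to an automorphism $\overline{\phi}$ of $G/K$ with $\theta\circ\phi = \overline{\phi}\circ\theta$; consequently, for any $\phi(S)$-prefix $w = \phi(v)$ of $\widetilde{s}$, we have $\theta(w) = \overline{\phi}(\theta(v))$, and we only need to control $\|\overline{\phi}(\theta(v))\|_1$.

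For that step I would apply Observation~\ref{obs:theta}: since $\|\theta(v)\|_1 \leq A$, there exists $h\in G$ with $\theta(h)=\theta(v)$ and $\|h\|_S \leq A$. Writing $h = t_1\cdots t_l$ with $l\leq A$ and $t_i\in S^{\pm 1}$ and applying $\phi$ letter-by-letter gives
\[
\overline{\phi}(\theta(v)) \;=\; \overline{\phi}(\theta(h)) \;=\; \theta(\phi(h)) \;=\; \sum_{i=1}^{l}\theta(\phi(t_i)).
\]
Now each summand has $l^1$-norm at most $B$ (noting that the bound $B = B(\phi,S)$ extends from $S$ to $S^{\pm 1}$ because $\theta(\phi(t^{-1})) = -\theta(\phi(t))$), so the triangle inequality yields $\|\theta(w)\|_1 \leq lB \leq AB$, which is exactly the desired bound.

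The main subtlety I anticipate is the case $s\in S^{-1}$, since $A = A(\phi^{-1})$ was formally introduced for $s\in S$. The natural fix is to extend the definition of $A$ to $s\in S^{\pm 1}$ (which yields the same constant, or at worst an absorbable constant factor, since one can produce an $S$-word for $\phi^{-1}(s^{-1})$ from the word for $\phi^{-1}(s)$ and re-analyze prefixes via Observation~\ref{obs:theta}). With that minor bookkeeping, the argument above applies uniformly to all $s\in S^{\pm 1}$ and produces the claimed bound $AB$.
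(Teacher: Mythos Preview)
Your proposal is correct and follows essentially the same route as the paper: take an $S$-word for $\phi^{-1}(s)$ with prefixes bounded by $A$, apply $\phi$ letter-by-letter to get the $\phi(S)$-word $\widetilde s$, and for each prefix invoke Observation~\ref{obs:theta} to rewrite $\theta(v)$ as a sum of at most $A$ terms each of $l^1$-norm at most $B$. Your use of the induced automorphism $\overline{\phi}$ on $G/K$ is exactly the paper's $K$-invariance argument repackaged, and your flagging of the $s\in S^{-1}$ bookkeeping is a point the paper passes over silently.
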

\begin{proof} 

Fix $s\in S^{\pm 1}$. By definition of the constant $A$ there exists an $S$-word
$x_1\ldots x_k$ (with $x_i\in S^{\pm 1}$) representing $\phi^{-1}(s)$ such that 
$\big\|\theta(\prod_{j=1}^l x_j)\big\|_1\leq A$ for all $1\leq l\leq k$.

Let $\widetilde s=\phi(x_1)\ldots\phi(x_k)$. Then $\widetilde s$ is a $\phi(S)$-word representing $\phi(\phi^{-1}(s))=s$.
Thus, to prove the claim it suffices to show that
\begin{equation}
\label{eq:S2}
\big\|\theta(\prod_{j=1}^l \phi(x_j))\big\|_1=\big\|\theta(\phi(\prod_{j=1}^l x_j))\big\|_1\leq AB\mbox{ for all }1\leq l\leq k.
\end{equation}
Fix such an index $l$. By Observation~\ref{obs:theta} and the choice of the word $x_1\ldots x_k$, there exists an $S$-word $v=y_1\ldots y_m$ with $m\leq A$ such that 
$\theta(\prod_{j=1}^l x_j)=\theta(v)$ and hence $\prod_{j=1}^l x_j =v z$ with $z\in K$.

Then $\phi(\prod_{j=1}^l x_j) =\phi(v) \phi(z)$, and since $K$ is $\phi$-invariant, we have 
$\theta(\phi(\prod_{j=1}^l x_j)) =\theta(\phi(v))$. Note that
$$\big\|\theta(\phi(v))\big\|_1=\big\|\theta(\phi(y_1\ldots y_m))\big\|_{1}\leq
\sum_{i=1}^m \big\|\theta(\phi(y_i))\big\|_1.$$
Since $m\leq A$ and $\big\|\theta(\phi(y_i))\big\|_1\leq B$ by definition of $B$, we get that
$\big\|\theta(\phi(v))\big\|_1\leq AB$ which proves \eqref{eq:S2}.
\end{proof}

\subsection{The main result}
In this subsection we will prove the main result of this section, Theorem~\ref{prop:connectedradius}. We start with a key proposition which shows that under the Regularity Hypothesis, we can control the ``size'' of 
$(\chi,S)$-non-negative forms with respect to any character $\chi$ of $G/\K$.

\begin{Proposition}\label{prop:path}
Let $G,\K$ and $S$ be as in Lemma~\ref{lem:Nielsen}, and assume that the Regularity Hypothesis holds 
for some $\Phi\subseteq \Aut(G,K)$  and a constant $C>0$.
Let $A=A(\Phi\cup\Phi^{-1})$ and $B=B(\Phi\cup\Phi^{-1})$. Let $\chi$ be a nonzero character of $G/\K$, let $g\in G$ with $\chi(g)\geq 0$, and write $g=s_1\ldots s_r$ with $s_i\in S^{\pm 1}$. 
Then there exists a $(\chi,S)$-non-negative form $w$ of $g$ such that for any prefix $v$ of $w$ we have
$$\|\theta(v)\|_1\leq (2BC+1)(AB+A+r)+2A.$$
 \end{Proposition}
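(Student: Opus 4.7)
The plan is to construct the desired $(\chi,S)$-non-negative form in two stages: (I) use the Regularity Hypothesis to reduce to a chain-centralizing pair for which $\chi$ is regular, apply Lemma~\ref{lemma:cc} there to obtain a non-negative form over $\phi(S)$, and control its exponents; (II) convert letter-by-letter back to an $S$-word, using enlarged exponents to preserve non-negativity through the substitution.

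For (I), the Regularity Hypothesis supplies $\phi\in\Phi$ and $Z\subseteq S$ with $(S,Z)$ chain-centralizing and $|\chi(\phi(z))|\geq M(\chi)/C$ for all $z\in Z$; applying the automorphism $\phi$ makes $(\phi(S),\phi(Z))$ also chain-centralizing, and $\chi$ is regular for this image pair. By Claim~\ref{claim1}, the given $S$-expression $g=s_1\cdots s_r$ can be rewritten as a $\phi(S)$-word $g=T_1\cdots T_K$ every $\phi(S)$-prefix $T_1\cdots T_j$ of which has $\|\theta(\cdot)\|_1\leq AB+r$ (the bound coming from $\|\theta(s_1\cdots s_{i-1})\|_1\leq i-1\leq r$ together with the $AB$-bound inside each $\widetilde s_i$). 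Lemma~\ref{lemma:cc} then produces centralizers $Q_j\in\phi(Z)$ and exponents $m_j$ realizing the $(\chi,\phi(S))$-non-negative form
\[
w_\chi=Q_1^{m_1}T_1\,Q_2^{m_2}Q_1^{-m_1}\,T_2\,\cdots\,Q_K^{m_K}Q_{K-1}^{-m_{K-1}}\,T_K\,Q_K^{-m_K}.
\]
I enlarge the $m_j$ slightly beyond the minimum allowed by Lemma~\ref{lemma:cc}: the right-hand sides of conditions (b), (c) are each augmented by an additive buffer $f=(A+B)M(\chi)$, chosen so that the $\chi$-value of the accumulated element at every relevant $\phi(S)$-boundary of $w_\chi$ exceeds the maximum $|\chi|$-fluctuation that can occur within the $S$-substitution of a single $\phi(S)^{\pm1}$-letter. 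Combining $|\chi(Q_j)|\geq M(\chi)/C$ with Corollary~\ref{cor:M} applied to $\|\theta(T_1\cdots T_j)\|_1\leq AB+r$ yields $|m_j|\leq C(AB+A+r)+O(1)$.

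For (II), replace each $\phi(S)^{\pm 1}$-letter $P$ appearing in $w_\chi$ by an $S$-word $w_P$ every $S$-prefix $u'$ of which satisfies $\|\theta(u')\|_1\leq A+B$ (the $+B$ appearing only on inverse letters, handled as in the proof of Claim~\ref{claim1}). This yields an $S$-word $w$ representing $g$. A generic $S$-prefix of $w$ factors as $v=y_0u'$, where $y_0$ is the $S$-realization of some $\phi(S)$-prefix $y$ of $w_\chi$ and $u'$ is an $S$-prefix of the substitution of the next letter. By Corollary~\ref{cor:M}, $|\chi(u')|\leq (A+B)M(\chi)$, and the buffer baked into the $m_j$'s ensures $\chi(v)=\chi(y)+\chi(u')\geq 0$. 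For the $\|\theta\|_1$-bound, using $[Q_j,Q_{j-1}]=1$ one checks that the accumulated element after any $\phi(S)$-prefix of $w_\chi$ has the form $T_1\cdots T_j\cdot Q_j^{a}Q_{j+1}^{b}$ with $|a|\leq|m_j|$ and $|b|\leq|m_{j+1}|$, whence $\|\theta(y)\|_1\leq (AB+r)+2B\max_j|m_j|$. Substituting the bound on $|m_j|$ and adding $\|\theta(u')\|_1\leq A+B$ produces the claimed inequality (the leading $(2BC+1)(AB+A+r)$ term matching exactly once one factors out $AB+A+r$).

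The main obstacle will be verifying that the buffered choice of $m_j$ actually makes $w$ non-negative \emph{at every} $S$-prefix, including ones lying within the $S$-substitution of a letter $Q_j^{\pm 1}$ inside a power block $Q_j^{\pm m_j}$, and doing so with only an $O(1)$ overhead in $|m_j|$. The delicate point is that the $(\chi,\phi(S))$-non-negativity of $w_\chi$ only controls $\chi$ at $\phi(S)$-boundaries, so one must separately track how $\chi$ evolves inside each power block and inside each single-letter substitution. This requires a case analysis that mirrors the proof of Lemma~\ref{lemma:cc} with the buffer $f$ propagated throughout, and initial-transient care (where the accumulated $\chi$ has not yet built up to the full buffer) handled by noting that within the opening block $Q_1^{m_1}$ each successive $\phi(S)$-step monotonically increases $\chi$.
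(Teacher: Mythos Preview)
Your overall architecture matches the paper's: rewrite $g$ as a $\phi(S)$-word via Claim~\ref{claim1}, apply Lemma~\ref{lemma:cc} over $\phi(S)$, then substitute each $\phi(S)^{\pm 1}$-letter by an $S$-word with prefix $\|\theta\|_1$ bounded by $A$ (or $A+B$ for inverses). The difference is how you compensate for the $\chi$-drop that occurs \emph{inside} a single letter-substitution. You propose inflating the exponents $m_j$ in Lemma~\ref{lemma:cc} by a buffer $f=(A+B)M(\chi)$; the paper instead sandwiches the whole construction between $t^{A}$ and $t^{-A}$ for a fixed $t\in S^{\pm1}$ with $\chi(t)=M(\chi)$.

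Your buffer device has a genuine gap at the two ends of $w_\chi$. Augmenting conditions (b) and (c) of Lemma~\ref{lemma:cc} by $f$ guarantees $\chi(y)\geq f$ only at the $\phi(S)$-boundaries that fall under Cases~1 and~2 of that lemma's proof. It does \emph{not} force $\chi\geq f$ at the $\phi(S)$-boundaries inside the opening block $Q_1^{m_1}$ (where $\chi$ climbs from $0$ to $m_1\chi(Q_1)$) or inside the closing block $Q_K^{-m_K}$ (where $\chi$ descends to $\chi(g)$, which may be $0$). Your remark that $\chi$ increases monotonically across the opening block addresses only the $\phi(S)$-boundaries themselves; it says nothing about the $S$-prefixes lying \emph{within} the substitution of the very first copy of $Q_1$. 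There the accumulated value is $0+\chi(u')$ with $|\chi(u')|\leq (A+B)M$, which can be negative. The same failure occurs symmetrically inside the substitution of the last copy of $Q_K^{-1}$. No choice of $m_j$ can repair this, since the endpoints $\chi=0$ and $\chi=\chi(g)$ are fixed.

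The paper's remedy is to work not with $g$ but with $t^{-A}gt^{A}$: one builds a $(\chi,\phi(S))$-non-negative form for $t^{-A}gt^{A}$, converts it to an $S$-word $q$, and shows only the weaker bound $\chi(v)\geq -MA$ for every $S$-prefix $v$ of $q$ (this is exactly what the letter-substitution costs). Then $t^{A}qt^{-A}$ is the desired $(\chi,S)$-non-negative form of $g$: the leading $t^{A}$, being already an $S$-word, raises $\chi$ to $MA$ before any substitution begins, absorbing the subsequent $-MA$ dips; the trailing $t^{-A}$ causes no trouble since $\chi(t^{A}q)=\chi(g)+MA\geq MA$. This also explains the ``$+A$'' in the factor $AB+A+r$ and the ``$+2A$'' at the end of the stated bound, which your accounting (with $AB+r$) does not reproduce.
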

\begin{proof} The basic idea is very simple.
Choose $\phi\in\Phi$ such that (***) in the Regularity Hypothesis holds for $\chi$, express $g$ as a $\phi(S)$-word and then use 
Lemma~\ref{lemma:cc} to construct a $(\chi,\phi(S))$-non-negative form of $g$.
This almost works -- the issue is that when we rewrite the obtained $(\chi,\phi(S))$-non-negative form as an $S$-word, prefixes of this $S$-word may have negative $\chi$-values. However, we have a lower bound for those $\chi$-values: $\chi(v)\geq -MA$ for every such prefix $v$, where $M=M(\chi)$.

To resolve this problem we choose $t\in S^{\pm 1}$ with $\chi(t)=M$ and apply the same argument to 
the element $t^{-A}gt^{A}=
t^{-A}\prod\limits_{i=1}^r s_i\,\, t^A$. It is easy to see that if $w$ is an $S$-word representing $t^{-A}gt^{A}$ such that
$\chi(v)\geq -MA$ for every such prefix $v$ of $w$, then $t^A w t^{-A}$ is a $(\chi,S)$-non-negative from of $g$.

We now present the full argument. For convenience, we break the construction into three steps.
\vskip .12cm

{\it Step 1:} Choose $t\in S^{\pm 1}$ with $\chi(t)=M$. Recall that $s_1\ldots s_r$ is an $S$-word representing
$g$ and hence $p=t^{-A}s_1\ldots s_r t^A$ is an $S$-word representing $t^{-A}gt^{A}$. Replacing
each factor $s\in \{s_1,\ldots,s_r,t^{\pm 1}\}$ in the word $p$ by the corresponding $\widetilde s$ 
given by Claim~\ref{claim1}, we obtain a $\phi(S)$-word $\widetilde p$
representing $t^{-A}gt^{A}$. 
\vskip .12cm

Let $v$ be an arbitrary $\phi(S)$-prefix of $\widetilde p$ . We claim that 
\begin{itemize}
\item[(1)] $\|\theta(v)\|_1\leq AB+A+r$;
\item[(2)] $|\chi(v)|\leq (AB+A+r)M$.
\end{itemize}
For convenience let us write $t^{-A}gt^{A}=\prod\limits_{i=1-A}^{r+A}s_i$ where $s_i=t^{-1}$ for $i\leq 0$ and $s_i=t$ for $i>r$.
Thus, $\widetilde p=\prod\limits_{i=1-A}^{r+A}\widetilde s_i$, and
there exists some $-A\leq j< r+A$ and a $\phi(S)$-prefix $w$ of $\widetilde s_{j+1}$ such that $v=(\prod_{i\leq j}\widetilde s_i) w$.
Then 
$$\|\theta(v)\|_1 \leq \|\theta(\prod_{i\leq j}\widetilde s_i)\|_1+ \|\theta(w)\|_1=
\|\sum_{i\leq j}\theta(s_i)\|_1+ \|\theta(w)\|_1.$$ 
The first summand is bounded above by
$A+r$ (this upper bound may occur for $j=r$ since for larger $j$ we start getting cancellations of $\theta(t)$ with 
$\theta(t^{-1})$). Also $\|\theta(w)\|_1\leq AB$ by Claim~\ref{claim1}, so we proved (1).
Inequality (2) immediately follows from (1) and Claim~\ref{cor:M}.
\vskip .2cm

{\it Step 2:} Next we use Lemma~\ref{lemma:cc} to construct a $(\chi,\phi(S))$
non-negative form of $\widetilde p$, call it $\widetilde q$ (the elements $z_i$ in Lemma~\ref{lemma:cc} will be chosen from $\phi(Z)$). For each $i$
we shall choose $n_i$ to be smallest in absolute value satisfying the inequality in Lemma~\ref{lemma:cc}. Since $|\chi(z_i)|\geq \frac{M}{C}$ by the Regularity Hypothesis and $|\chi(v)|\leq (AB+A+r)M$ for every prefix $v$ of $\widetilde p$, we have
$|n_i|\leq C(AB+A+r)$.

Now we need to establish the analogue of (1) for prefixes of $\widetilde q$. Let $v$ be a $\phi(S)$-prefix of $\widetilde q$. It is clear from the construction that there is a $\phi(S)$-prefix $w$ of $\widetilde p$ such that $\theta(v)$ is equal to $\theta(w)$ or $\theta(wz_i^{m})$ with
$|m|\leq |n_i|$ for some $i$ or $\theta(wz_i^{n_i}z_{i+1}^m)$ with $|m|\leq |n_{i+1}|$ for some $i$. In any case
we have $$\|\theta(v)\|_1 \leq  \|\theta(w)\|_1+ |n_i|\cdot \|\theta(z_i)\|_1 +|n_{i+1}|\cdot \|\theta(z_{i+1})\|_1.$$
Since $\|\theta(w)\|_1\leq AB+A+r$ by (1), $\|\theta(z_j)\|_1\leq B$ for all $j$ by definition of $B$ and
$|n_j|\leq C(AB+A+r)$ for all $j$ as established above, we get
\begin{itemize}
\item[(3)]  $\|\theta(v)\|_1\leq (AB+A+r)+2BC(AB+A+r)=(2BC+1)(AB+A+r)$.
\end{itemize}
\vskip .2cm

{\it Step 3:} Next we rewrite $\widetilde q$ as an $S$-word, call it $q$. Let $v$ be an $S$-prefix of $q$. 
We claim that
\begin{itemize}
\item[(4)]  $\|\theta(v)\|_1\leq (2BC+1)(AB+A+r)+A$; 
\item[(5)] $\chi(v)\geq -MA$. 
\end{itemize}
Indeed, similarly to Step~1,
we can write $v=v_1 v_2$ where $v_1$ is a $\phi(S)$-prefix of $\widetilde q$ and $v_2$ is an $S$-prefix of some $x\in\phi(S^{\pm 1})$.
Now $\|\theta(v_1)\|_1\leq (2BC+1)(AB+A+r)$ by (3) and $\|\theta(v_2)\|_1\leq A$ by definition of $A$, so (4) holds.

Since $\|\theta(v_2)\|_1\leq A$, Claim~\ref{cor:M} yields $|\chi(v_2)|\leq MA$, so in particular $\chi(v_2)\geq -MA$. 
Since $\widetilde q$ is a $(\chi,\phi(S))$-non-negative form, we must have $\chi(v_1)\geq 0$, so 
$\chi(v)=\chi(v_1)+\chi(v_2)\geq -MA$. Thus we proved (5).

Recall now that $q$ represents $t^{-A}gt^A$ in $G$ and hence $t^A q t^{-A}$ represents $g$. We claim that $t^A q t^{-A}$ is the desired $(\chi,S)$-non-negative form. Take any $S$-prefix $w$ of $t^Aqt^{-A}$. We need to show that
\begin{itemize}
\item[(6)] $\|\theta(w)\|_1\leq (2BC+1)(AB+A+r)+2A$; 
\item[(7)]  $\chi(w)\geq 0$.
\end{itemize}
Clearly, there are 3 cases:

{\it Case 1:} $w=t^m$ for some $0\leq m\leq A$. In this case both (6) and (7) are obvious.

{\it Case 2:} $w=t^Av$ for some prefix $v$ of $q$. In this case $\|\theta(v)\|_1\leq (2BC+1)(AB+A+r)+A$ by (4), so (6) holds,  
and (7) follows from (5) and the fact that $\chi(t^A)=MA$ (by the choice of $t$).

{\it Case 3:} $w=t^A q t^{-m}$ for some $0\leq m\leq A$. In this case (6) again follows  from (4). Finally,
$\chi(t^A q t^{-A})\geq 0$ since $t^A q t^{-A}$ represents $g$ and hence
$\chi(w)=\chi(t^A q t^{-A}t^{A-m})\geq (A-m)\chi(t)\geq 0$.
\end{proof} 

We can now construct an explicit finite subset of $G/\K$ whose preimage in $Cay(G,S)$ is connected. We will show that the $l^{\infty}$-ball of a certain radius has this property:

\begin{Theorem}
\label{prop:connectedradius}
We keep all the hypotheses and notations from Proposition~\ref{prop:path} and
let $R=16(BC+1)^2(AB+3A+3)$. Let $B_{\infty}(R)$ be the $l^{\infty}$-ball of radius $R$ in $G/\K$. Then $\theta^{-1}(B_{\infty}(R))$ is connected in $Cay(G,S)$.
\end{Theorem}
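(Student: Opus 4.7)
The plan is to imitate the proof of the BNS criterion sketched after Theorem~\ref{thm:BNS}, replacing the nonconstructive $(\chi,S)$-non-negative paths by the explicit ones supplied by Proposition~\ref{prop:path}. Fix $a,b\in\theta^{-1}(B_\infty(R))$ and an arbitrary path $p$ in $\Cay(G,S)$ from $a$ to $b$; I will modify $p$ repeatedly until every vertex on it has $\|\theta(\cdot)\|_\infty\le R$.

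The main step is a single local modification. Pick a vertex $g$ on the current path at which $\|\theta(g)\|_\infty$ is maximal; if the maximum is $\le R$ we are done, so assume it exceeds $R$. Fix a coordinate $i$ with $|\theta(g)_i|=\|\theta(g)\|_\infty$ and, changing signs if necessary, assume $\theta(g)_i>0$; set $\chi(x)=-\theta(x)_i$, so $\chi\in\Hom(G/K,\R)$ is nonzero, vanishes on $K$, and $M(\chi)=1$. Since $\theta(S)\supseteq E$, some $t\in S^{\pm 1}$ satisfies $\chi(t)=1$. Let $gy_1$ and $gy_2$ be the vertices of $p$ adjacent to $g$; the maximality of $\|\theta(g)\|_\infty$ (together with $\theta(y_j)_i\in\{-1,0,1\}$) forces $\chi(y_j)\ge 0$ for $j=1,2$.

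For each $j\in\{1,2\}$, I apply Proposition~\ref{prop:path} to the element $t^{-1}y_jt$, which has $\chi$-value $\chi(y_j)\ge 0$ and $S$-length at most $3$. This yields a $(\chi,S)$-non-negative form whose every prefix $v$ has $\|\theta(v)\|_1\le L$, where $L=(2BC+1)(AB+A+3)+2A$. Translating by $t$ turns these into paths in $\Cay(G,S)$ from $t$ to $y_jt$ on which $\chi\ge 1$ and $\|\theta(\cdot)\|_1\le L+1$ at every vertex. Conjugating by $g$ and attaching the edges labelled $t$ and $t^{-1}$, I replace the three-vertex segment $gy_1,g,gy_2$ of $p$ by the longer segment
\[
gy_1\;\to\;gy_1t\;\to\;\cdots\;\to\;gt\;\to\;\cdots\;\to\;gy_2t\;\to\;gy_2.
\]
Every new vertex has the form $h=gv$ with $\chi(v)\ge 1$, whence $\theta(h)_i\le \|\theta(g)\|_\infty-1$---a strict decrease in the dominant coordinate.

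The main obstacle will be proving termination. In coordinates $j\ne i$ the new vertices satisfy only $|\theta(h)_j|\le\|\theta(g)\|_\infty+L+1$, so the maximum $l^\infty$ norm along $p$ need not decrease monotonically. The plan is to assign each path a well-founded lexicographic complexity---for instance, the pair consisting of $\max_{g\in p}\|\theta(g)\|_\infty$ together with the number of vertices attaining this maximum---and to verify that each modification strictly reduces this complexity once the maximum exceeds $R$. The specific choice $R=16(BC+1)^2(AB+3A+1)$, which is roughly of order $BC\cdot L$, is engineered so that the unit gain in coordinate $i$ dominates the cumulative off-coordinate drift coming from Proposition~\ref{prop:path} across the finitely many modifications required to eliminate vertices with $\|\theta(\cdot)\|_\infty>R$; verifying this estimate is the technical heart of the argument.
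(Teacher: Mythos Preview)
Your outline follows the right template, but the termination argument has a real gap that you yourself flag and do not close. With your single-coordinate character $\chi(x)=-\theta(x)_i$ and a one-step detour ($r=1$), a modification decreases the $i$th coordinate by $1$ but can increase some other coordinate by as much as $L+1$. Hence the new maximum $\ell^\infty$-norm along the path may strictly increase, and the lexicographic complexity you propose (max $\ell^\infty$-norm, number of vertices attaining it) does \emph{not} go down. Nothing in your setup explains why ``the unit gain in coordinate $i$ dominates the cumulative off-coordinate drift''; in fact it does not, one step at a time, and iterating can bounce the dominant coordinate around indefinitely.

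The paper resolves this with two changes you are missing. First, it measures progress by the $\ell^2$-norm (of vertices outside $B_\infty(R)$) rather than the $\ell^\infty$-norm, and takes $\chi(x)=-(\theta(g),\theta(x))$, the full inner product. Second, it pushes by $t^{-r}$ with $r$ on the order of $AB+3A+1$, not $r=1$, applying Proposition~\ref{prop:path} to $t^{r}y_jt^{-r}$. Then every new vertex has the form $gt^{-r}v$ with $\chi(v)\ge 0$, and one computes
\[
\|\theta(gt^{-r}v)\|_2^2 \le \|\theta(g)\|_2^2 + \bigl((2BC+1)(AB+3A+2r+1)+r\bigr)^2 - 2rR,
\]
using $(\theta(g),\theta(v))\le 0$ and $(\theta(g),\theta(t))\ge R$. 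Setting $r=\lfloor (AB+3A+1)/2\rfloor$ and $R=16(BC+1)^2(AB+3A+1)$ makes the right-hand increment negative, so the $\ell^2$-norm strictly decreases at the modified vertex and termination is immediate. Your choice of $\chi$ would make the cross-term estimate $(\theta(g),\theta(v))\le 0$ fail, and your choice $r=1$ would make the quadratic term swamp the linear one; both ingredients are essential.
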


To simplify terminology, in the proof of Theorem~\ref{prop:connectedradius} we will occasionally talk about 
the $l^2$-norm or $l^{\infty}$-norm for an element $g\in G$, by which we mean the corresponding norm of $\theta(g)$.

\begin{proof}
Recall that $E$ is a fixed a basis of $G/K$ such that $\theta(S)=E$ or $E\cup\{0\}$. 
As before, we choose an isomorphism $G/K\to \dbZ^m$ which maps $E$ to the standard basis of $\dbZ^m$.

We will generally follow the outline of the proof of Theorem~\ref{thm:BNS} given earlier in this section. Here is the summary of the new ingredients that we will use:
\begin{itemize}
\item[(i)] The compactness argument will be replaced by a reference to Proposition~\ref{prop:path}, which enables us to get
an explicit formula for $R$.
\item[(ii)] In the proof we will use not just $l^2$-norm, but also $l^{\infty}$-norm and $l^1$-norm (where all the norms
are taken with respect to the chosen identification of $G/K$ with $\dbZ^m$).
\item[(iii)] When modifying the path at each step we will make slightly more complicated ``detours''.
\end{itemize}
Note that modification (i) is essential, while (ii) and (iii) will only be used to obtain a better estimate.

So take any $a,b\in G$ with $\|\theta(a)\|_{\infty},\|\theta(b)\|_{\infty}\leq R$. Our goal is to show that there is a path
from $a$ to $b$ in $Cay(G,S)$ whose projection lies in $B_{\infty}(R)$. We start by choosing some path $p$ from $a$ to $b$
in $Cay(G,S)$ and then apply a sequence of modifications, eventually pushing it inside $\theta^{-1}(B_{\infty}(R))$.

If $\theta(p)$ lies inside $B_{\infty}(R)$, we are done, so assume that $\theta(p)$ has at least one vertex outside $B_{\infty}(R)$. Among all the vertices of $p$ outside $B_{\infty}(R)$ we choose one with the largest $l^2$-norm, call it $g$. Our goal is to replace $p$ by another path $p'$ from $a$ to $b$ which
does not have any vertices with $l^2$-norm larger than $\|g\|_2$ and has fewer vertices with $l^2$-norm equal to $\|g\|_2$ than $p$. Clearly, after applying such a modification finitely many times, we will obtain a path inside $\theta^{-1}(B_{\infty}(R))$, as desired. Note that the maximal $l^{\infty}$-norm may increase during some initial steps -- this is not a problem.

Define the character $\chi$ of $G/\K$ by $\chi(x)=-(\theta(g),\theta(x))$.  Let $gy_1$ and $gy_2$, where $y_1,y_2\in S\cup S^{-1}$, be the vertices preceding and succeeding $g$ in the path $p$, respectively. 
We claim that $||\theta(gy_i)||_2\leq ||\theta(g)||_2$ for $i=1,2$. 

To prove the claim recall that by assumption $g$ has the largest $l^2$-norm among the vertices of $p$ which lie 
outside of $\theta^{-1}(B_{\infty}(R))$. Thus, if 
$\theta(gy_i)\not \in B_{\infty}(R)$, it is automatic that $||\theta(gy_i)||_2\leq ||\theta(g)||_2$.
On the other hand, if $\theta(gy_i)\in B_{\infty}(R)$, we have 
$||\theta(gy_i)||_{\infty}\leq R< ||\theta(g)||_{\infty}$. 
But by assumption $\theta(y_i)\in\{\pm e_j\}\cup\{0\}$. Hence $\theta(g)$ and $\theta(gy_i)$ differ in at most one coordinate, so
$||\theta(gy_i)||_{\infty}< ||\theta(g)||_{\infty}$ forces $||\theta(gy_i)||_{2}< ||\theta(g)||_{2}$
as well.

\vskip .12cm

Thus, we proved that $||\theta(gy_i)||_2\leq ||\theta(g)||_2$ for $i=1,2$. Since   
$$||\theta(gy_i)||_2^2 =||\theta(g)||_2^2+||\theta(y_i)||_2^2+2(\theta(g), \theta(y_i))
	 = \|\theta(g)\|_2^2+||\theta(y_i)||_2^2 - 2\chi(y_i),$$
it follows that $\chi(y_i)\geq 0$ for $i=1,2$.

Let $\theta(g)_i$ denote the  $i^{\rm th}$ coordinate of $\theta(g)$, and choose $i$ such that 
$\theta(g)_i$ is maximal in absolute value. Since by assumption $\theta(g)\not\in B_{\infty}(R)$, we have
$|\theta(g)_i|> R$. Choose $t\in S^{\pm 1}$
with $\theta(t)=\pm e_i$ where the sign is chosen to be the same as the sign of $\theta(g)_i$. 
Then $(\theta(g),\theta(t))=|\theta(g)_i|> R$.

We proceed with the construction of $p'$ (see Figure~\ref{figure:pathproof} for an illustration). The path $p'$ will coincide with $p$ prior to $gy_1$ and after $gy_2$, but we will connect $gy_1$ and $gy_2$ by a new subpath which passes through the vertices $gy_1 t^{-r}, gt^{-r}$ and $gy_2 t^{-r}$, in this order, where $r$ satisfying $1\leq r\leq R$ will be chosen later. We connect $gy_1$ with $gy_1 t^{-r}$ in the natural way (multiplying $r$ times by $t^{-1}$) and similarly $gy_2 t^{-r}$ with
$gy_2$. To connect $gt^{-r}$ with $gy_2 t^{-r}$, we write $gy_2 t^{-r}=(gt^{-r})(t^r y_2 t^{-r})$ and then replace the suffix
$t^r y_2 t^{-r}$ by its $(\chi,S)$-nonnegative form constructed in Proposition~\ref{prop:path} (the latter is applicable since
$\chi(t^r y_2 t^{-r})=\chi(y_2)\geq 0$). Similarly we connect $gy_1 t^{-r}$ with $g t^{-r}$. 

\begin{figure}
% the p' segment highlighted in red

\begin{tikzpicture}
  [fil/.style={circle, fill=black, inner sep=0pt, outer sep=0pt, minimum size=1.5mm, thick}, op/.style={circle, draw, fill=white, inner sep=0pt, outer sep=0pt, minimum size=1.5mm}]

  \node[fil, label=120:$gy_1$] (gy1) at (0,0){};
  \node[fil, label=above:$g$] (g) at (4,1){};
  \node[fil, label=60:$gy_2$] (gy2) at (8,0){};
  
  \draw[dashed] (-1,-0.5) -- (gy1) (gy2) -- (9,-0.5);
  \draw[thick] (gy1) -- (g) node[pos=0.5, above]{{\footnotesize $y_1$}} -- (gy2) node[pos=0.5, above]{{\footnotesize $y_2$}};
  
  \node[op] (Lt1) at (0.33, -1){};
  \node[op] (Lt2) at (0.67, -2){};
  \node[op, label=below:$gy_1t^{-r}$] (gy1t) at (1, -3){};
  
  \draw[red] (gy1) -- (Lt1) node[black, pos=0.4, right]{{\footnotesize $t$}} (Lt2) -- (gy1t) node[black, pos=0.4, right]{{\footnotesize $t$}};
  \draw[red, dashed] (Lt1) -- (Lt2);
  
  \node[op] (Rt1) at (7.67, -1){};
  \node[op] (Rt2) at (7.33, -2){};
  \node[op, label=below:$gy_2t^{-r}$] (gy2t) at (7, -3){};
  
  \draw[red] (gy2) -- (Rt1) node[black, pos=0.4, left]{$t$} (Rt2) -- (gy2t) node[black, pos=0.4, left]{$t$};
  \draw[red, dashed] (Rt1) -- (Rt2);
  
  \node[op, label=above:$gt^{-r}$] (gt) at (4, -2){};
  
  \draw[red, dashed] (gy1t.30) sin (gt.west) (gt.east) cos (gy2t.150);

\end{tikzpicture}
\caption{}
\label{figure:pathproof}
\end{figure}

We now need to show that if $v$ is any vertex on this new subpath different from the end vertices $gy_1$ and $gy_2$, then
$\|\theta(v)\|_2<\|\theta(g)\|_2$. Recall that $\theta(t)=\pm e_i$, so all the vertices on the segment between 
$gy_1$ to $gy_1 t^{-r}$ differ only in the $i^{\rm th}$ coordinate. Moreover, by assumption $\theta(gy_1)_i\geq \theta(g)_i-1\geq R$
and $\theta(t)_i$ and $\theta(gy_1)_i$ have the same sign. Thus, if we require that $r\leq R$, then 
$|\theta(gy_1 t^{-j})_i|=|\theta(gy_1)_i|-j$ for $0\leq j\leq r$, so the $l^2$-norm strictly goes down 
as we move from $gy_1$ to $gy_1 t^{-r}$. Similarly, the $l^2$-norm strictly goes down when we move from $gy_2$ to $gy_2 t^{-r}$. 

Let us now consider the vertices of $p'$ between
$gt^{-r}$ and  $gy_2 t^{-r}$ (the vertices between $gy_1 t^{-r}$ and $g t^{-r}$ are treated similarly). Any such vertex is of the form $gt^{-r}v$ where $v$ is a prefix of the $(\chi,S)$-non-negative form of $t^r y_2 t^{-r}$ chosen above. 
Since $\|t^r y_2 t^{-r}\|_1\leq 2r+1$,
by construction we get
$$\|\theta(v)\|_1\leq (2BC+1)(AB+A+2r+1)+2A\leq (2BC+1)(AB+3A+2r+1)$$ and $\chi(v)\geq 0$, so $(\theta(g),\theta(v))=-\chi(v)\leq 0$. 
Also recall that $(\theta(g), \theta(t))\geq R$. 
Therefore we have
   \begin{align*}
	 \|\theta(gt^{-r}v)\|_2^2 &=\|\theta(g)\|_2^2+\|\theta(t^{-r}v)\|_2^2+2(\theta(g), \theta(t^{-r}))+
2(\theta(g), \theta(v)) \\
&\leq \|\theta(g)\|_2^2+\|\theta(t^{-r}v)\|_1^2-2r(\theta(g), \theta(t))+
2(\theta(g), \theta(v)) \\
	 &\leq \|\theta(g)\|_2^2 +(\|\theta(v)\|_1+r)^2-2rR \\
	 &\leq \|\theta(g)\|_2^2 + ((2BC+1)(AB+3A+2r+1)+r)^2-2rR\\
   &< \|\theta(g)\|_2^2 + ((2BC+2)(AB+3A+2r+1))^2-2rR.
\end{align*}
The first inequality above holds since for any $(a_1,\ldots,a_n)\in\dbZ^n$ we have 
$\|(a_1,\ldots,a_n)\|_2^2=\sum |a_i|^2\leq (\sum |a_i|)^2=\|(a_1,\ldots,a_n)\|_1^2$.
\vskip .1cm

 Recall that $R=16(BC+1)^2(AB+3A+3)$. For simplicity of notation write $P=16(BC+1)^2$ and $Q=\frac{AB+3A+1}{2}$,
 so that $R=2P(Q+1)$. If we set $r=\lceil Q \rceil$ (which satisfies the condition $r\leq R$ imposed earlier in the proof), 
then $Q\leq r<Q+1$ and hence $Q+r\leq 2r<2(Q+1)$. Therefore
$$((2BC+2)(AB+3A+2r+1))^2-2rR=P(Q+r)^2-P\cdot 2r\cdot 2(Q+1)<0$$
 which completes the proof.
 \end{proof}
 
\subsection{An explicit generating set for $\K$}

To finish the constructive proof of finite generation given in this section, we need to establish  an effective version of Theorem~\ref{preimage}.  
 
For a technical reason, in the following two results it will be convenient to work with left Cayley graphs (note that earlier in this section we worked with the commonly used right Cayley graphs). The left Cayley graph of a group $G$ with respect to $S$, denoted by $Cay_{left}(G,S)$, is defined in the same way as $Cay(G,S)$ except
that edges have the form $(g,s^{\pm 1}g)$ with $s\in S$. Note that $Cay_{left}(G,S)$ and $Cay(G,S)$ are isomorphic as graphs via the inversion map $g\mapsto g^{-1}$.

Theorem~\ref{lem:ReidSchreier} below is a variation of the Reidemeister-Schreier rewriting process.
This result is undoubtedly well known, but we are not aware of a specific reference in the literature, so we will provide a proof.

Recall that if $K$ is a subgroup of a group $G$, a left transversal of $K$ in $G$ is a subset $T$ of $G$ which contains exactly one element from each left coset of $K$.

\begin{Theorem}
\label{lem:ReidSchreier}
Let $G$ be a group generated by a set $S$, let $\K$ be a (not necessary normal) subgroup of $G$, and let $\theta:G\to G/\K$
be the natural projection. Let $\F\subseteq G/\K$ be such that $\theta^{-1}(\F)$ is connected in $Cay_{left}(G,S)$. Let $T$ be a left transversal for $\K$ in $G$, let $T_\F=T\cap \theta^{-1}(\F)$ (note that $|T_\F|=|\F|$) and let 
$$U=\{(s,t): t\in T_\F, s\in S,\, \theta(st)\in \F\}.$$
Then $\K$ is generated by the set $$S_\K=\{{\overline{st}\,}^{-1}st: (s,t)\in U\}$$
where for every $g\in G$ by $\overline g\in T$ we denote the unique
element of $T$ such that $\theta(g)=\theta(\overline g)$.
In particular, if $\F$ and $S$ are finite, then
$\K$ can be generated by (at most) $|\F||S|$ elements.
\end{Theorem}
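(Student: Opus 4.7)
The plan is a Reidemeister--Schreier rewriting adapted to the connected set $\theta^{-1}(\F)$. First I observe that each candidate generator in $S_\K$ really lies in $\K$: for $(s,t)\in U$, the definition of $\overline{\,\cdot\,}$ gives $\overline{st}\,\K=st\,\K$, hence $\overline{st}^{-1}st\in\K$. After modifying $T$ if necessary I may assume $1\in T$ (hence $1\in T_\F$, i.e.\ the trivial coset lies in $\F$); this is essentially forced since otherwise $1\in\K$ cannot be reached by paths inside $\theta^{-1}(\F)$ and generating $\K$ from such paths would be hopeless.

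Take $k\in\K$ arbitrary. Connectivity of $\theta^{-1}(\F)$ in $Cay_{left}(G,S)$ yields a path
$1=g_0,g_1,\ldots,g_n=k$
with $g_i=s_i^{\eps_i}g_{i-1}$, $s_i\in S$, $\eps_i\in\{\pm 1\}$, and every $g_i\in\theta^{-1}(\F)$. Set $t_i=\overline{g_i}$; then $t_i\in T_\F$ for all $i$, and $t_0=t_n=1$.

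The central claim is the factorization
$$s_i^{\eps_i}=t_i\,\alpha_i\,t_{i-1}^{-1}\qquad\text{with }\alpha_i\in S_\K^{\pm 1}.$$
For $\eps_i=1$, the pair $(s_i,t_{i-1})$ belongs to $U$ because $t_{i-1}\in T_\F$ and $\theta(s_it_{i-1})=\theta(s_ig_{i-1})=\theta(g_i)\in\F$; since $\overline{s_it_{i-1}}=\overline{g_i}=t_i$, the element $\alpha_i:=t_i^{-1}s_it_{i-1}\in S_\K$ satisfies $s_i=t_i\alpha_it_{i-1}^{-1}$. For $\eps_i=-1$, apply the same reasoning to the pair $(s_i,t_i)$, which lies in $U$ because $\theta(s_it_i)=\theta(s_ig_i)=\theta(g_{i-1})\in\F$ and $\overline{s_it_i}=t_{i-1}$; then $\beta_i:=t_{i-1}^{-1}s_it_i\in S_\K$, and setting $\alpha_i:=\beta_i^{-1}$ gives $s_i^{-1}=t_i\alpha_it_{i-1}^{-1}$.

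With this factorization the proof telescopes:
$$k=g_n=s_n^{\eps_n}\cdots s_1^{\eps_1}=\bigl(t_n\alpha_nt_{n-1}^{-1}\bigr)\bigl(t_{n-1}\alpha_{n-1}t_{n-2}^{-1}\bigr)\cdots\bigl(t_1\alpha_1t_0^{-1}\bigr)=\alpha_n\alpha_{n-1}\cdots\alpha_1,$$
using $t_0=t_n=1$. This exhibits $k$ as a word in $S_\K^{\pm 1}$, so $\K=\langle S_\K\rangle$; the cardinality bound is immediate from $|S_\K|\le|U|\le|S|\cdot|T_\F|=|S||\F|$. The only real subtlety is that the telescoping succeeds precisely because \emph{both} endpoints $t_{i-1},t_i$ of each transition lie in $T_\F$, which is exactly what confinement of the path to $\theta^{-1}(\F)$ guarantees; that is where the connectivity hypothesis enters.
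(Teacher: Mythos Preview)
Your proof is correct and follows essentially the same Reidemeister--Schreier approach as the paper: take a path in $\theta^{-1}(\F)$ from $1$ to $k$, track the transversal representatives along the way, and show that each step contributes a factor from $S_\K^{\pm 1}$. Your telescoping presentation is a clean way to package the same computation the paper does when it shows $k_{i+1}k_i^{-1}\in S_\K^{\pm 1}$.

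One small quibble: ``modifying $T$ if necessary'' to ensure $1\in T$ is not literally without loss of generality, since $S_\K$ depends on $T$. Without this assumption your telescoping yields $k=t_0(\alpha_n\cdots\alpha_1)t_0^{-1}$ where $t_0=\overline{1}$ is the unique element of $T\cap\K$; but since $t_0\in\K$, conjugation by $t_0$ is a bijection of $\K$, and $\K\subseteq\langle S_\K\rangle$ still follows. (The paper's proof makes the same implicit assumption when it asserts ``$k_0=1$ and $k_m=k$''.) The separate point that the trivial coset must lie in $\F$ is, as you note, genuinely needed for either argument to get started.
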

\begin{Remark}\rm If we take $\F=G/\K$, then $\theta^{-1}(\F)=G$ is automatically connected. In this case $S_\K$ is the usual Reidemeister-Schreier generating set for $\K$.
\end{Remark}
\begin{proof}
Let $\K'$ be the subgroup generated by $S_\K$. Clearly 
$S_\K\subseteq \K$ (since $\overline{g}\K= g \K$ for all $g\in G$
by definition of $\overline{g}$), so $\K'\subseteq \K$.
Let us now prove that $\K\subseteq \K'$.

Take any $k\in \K$. Since $\K=\theta^{-1}(1)\subseteq\theta^{-1}(\F)$ and $\theta^{-1}(\F)$ is connected in $Cay_{left}(G,S)$, we can find a path
$1=y_0,y_1,\ldots, y_m=k$ in $Cay_{left}(G,S)$ with $\theta(y_i)\in \F$ for each $i$.
Since $T$ is a transversal for $\K$, for each $0\leq i<m$
we can uniquely write $y_i=t_i k_i$ where $t_i\in T$ and $k_i\in \K$.
Note that $k_0=1$ and $k_m=k$. Thus, to prove that $k\in \K'$ it suffices to show that $k_{i+1}k_i^{-1}\in \K'$ for each $0\leq i<m$. 

Let us fix $0\leq i<m$. By assumption $y_i$ and $y_{i+1}$ are connected by an edge in $Cay_{left}(G,S)$, so there exists $s\in S$ such that $y_{i+1}=sy_i$ or $y_i=sy_{i+1}$. 

First consider the case $y_{i+1}=sy_i$.
We have $\theta(t_i)=\theta(t_i k_i)=\theta(y_i)\in \F$, so $t_i\in T_\F$ and
$\theta(st_i)=\theta(st_i k_i)=\theta(sy_i)= \theta(y_{i+1})\in \F$, whence $(s,t_i)\in U$.

Also $t_{i+1}k_{i+1}=y_{i+1}=sy_i=st_i k_i$ and hence
$t_{i+1}=\overline{t_{i+1}k_{i+1}}=\overline{st_i k_i}=
\overline{st_i}$. Finally, the equality $t_{i+1}k_{i+1}=st_i k_i$ implies that
$k_{i+1}k_i^{-1}=t_{i+1}^{-1}st_i=\overline{st_i}^{-1}st_i\in S_\K$,
as desired.
\vskip .12cm
In the case $y_i=sy_{i+1}$ we can repeat the above argument swapping $i$ and $i+1$ in every expression and observe in the end
that $k_{i}k_{i+1}^{-1}\in \K'$ forces $k_{i+1}k_{i}^{-1}=(k_{i}k_{i+1}^{-1})^{-1}\in \K'$.
\end{proof}
 
\paragraph{\bf Making the generating set for $\K$ more explicit} Let us now consider the special case where $G/\K$ is abelian and torsion-free.
Let $\theta:G\to G/\K$ be the natural projection. Choose an ordered basis
$E=\{e_1,\ldots, e_n\}$ of $G/\K$, and use it to identify $G/\K$ with $\dbZ^n$.
Recall that by Lemma~\ref{lem:Nielsen}, $G$ has a finite generating set $S$ such that $\theta(S)=E$ or
$\theta(S)=E\cup\{0\}$. Note that $\theta$ restricted to $S$ need not be injective.

We will show that if $\F$ satisfies a certain technical condition (see the definition of a Schreier set below), one can obtain an even more explicit finite generating set for $\K$ (by slightly modifying the set $S_\K$ from Theorem~\ref{lem:ReidSchreier}).

Choose $s_1,\ldots, s_n\in S$ with $\theta(s_i)=e_i$, and let $S_1=\{s_1,\ldots, s_n\}$. Let
$S_2$ be the elements of $S\setminus S_1$ which lie outside of $\K$, and let $S_3$ be the elements of $S\setminus S_1$ which lie in $\K$. Thus $S=S_1\sqcup S_2\sqcup S_3$. For each $s\in S_2$ let $d(s)$ be the unique integer such that $\theta(s)=e_{d(s)}$.
 
\begin{Definition}\rm In the above notations, a subset $\F$ of $G/\K$ will be called {\it Schreier} if 
the following hold:
\begin{itemize}
\item[(i)] $(0,\ldots,0)\in\F$
\item[(ii)] for every $(a_1\ldots, a_n)\in \F$ and any path $p$ in $Cay(G/K,E)$ from $(0,\ldots,0)$ to
$(a_1\ldots, a_n)$ of minimal length, all vertices of $p$ lie in $\F$.
\end{itemize}
Equivalently, $\F\subseteq G/\K$ is Schreier if for every
$(a_1\ldots, a_n)\in \F$, any element $(b_1,\ldots, b_n)\in G/\K$ such that $|b_i|\leq |a_i|$ and $a_ib_i\geq 0$ for all $i$ also lies in $\F$.  
\end{Definition}
\begin{Remark}\rm It is clear that for any $p\in [1,\infty]$, the $l^p$-ball centered at $0$ is a Schreier set. 
 \end{Remark}
 
 Recall that for group elements $x,y$ we set $[x,y]=x^{-1}y^{-1}xy$ and $x^y=y^{-1}xy$.
 \begin{Theorem}
 \label{preimage_effective}
 Assume that $G,\K$ and $S$ satisfy the above conditions. Let $\F\subseteq G/\K$ be a Schreier subset of $G/\K$
 such that $\theta^{-1}(\F)$ is connected. Then $\K$ is generated by the following three types of elements:
 \begin{itemize}
 \item[(a)] $[s_i,s_j]^{s_i^{a_i}s_{i+1}^{a_{i+1}}\ldots\,\, s_n^{a_n}}$ where $1\leq i<j\leq n$ and 
$(\underbrace{0,\ldots,0}_{i-1\mbox{ \tiny times }},a_i,a_{i+1},\ldots, a_n)\in \F$
 \item[(b)] $(s^{-1} s_{d(s)})^{s_1^{a_1}s_{2}^{a_{2}}\ldots s_n^{a_n}}$ where $s\in S_2$ and $(a_1,\ldots, a_n)\in \F$
\item[(c)] $s^{s_1^{a_1}s_{2}^{a_{2}}\ldots s_n^{a_n}}$ where $s\in S_3$ and $(a_1,\ldots, a_n)\in \F$.
 \end{itemize}
 \end{Theorem}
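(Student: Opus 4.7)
The plan is to apply Theorem~\ref{lem:ReidSchreier} with the left transversal
$T=\{s_1^{a_1}\cdots s_n^{a_n}:(a_1,\ldots,a_n)\in\dbZ^n\}$ of $\K$ in $G$
(a transversal because $\theta(s_i)=e_i$ freely generates $G/\K$), and then rewrite each resulting Reidemeister--Schreier generator $\overline{st}^{-1}st$ as a product of elements of types (a), (b), (c). Given the connectedness of $\theta^{-1}(\F)$, Theorem~\ref{lem:ReidSchreier} yields
$\K=\langle\overline{st}^{-1}st:(s,t)\in U\rangle$,
where for each $(s,t)\in U$ one may write $t=s_1^{a_1}\cdots s_n^{a_n}$ with $(a_1,\ldots,a_n)\in\F$ and $\theta(st)\in\F$.

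I then do a case analysis on $s\in S=S_1\sqcup S_2\sqcup S_3$. If $s\in S_3$, then $\theta(s)=0$ forces $\overline{st}=t$, so $\overline{st}^{-1}st=s^t$ is already of type (c). If $s\in S_2$, setting $s'=s_{d(s)}\in S_1$ gives $\theta(s)=\theta(s')$ and hence $\overline{st}=\overline{s't}$, so
\begin{equation*}
\overline{st}^{-1}st=\bigl(\overline{s't}^{-1}s't\bigr)\cdot t^{-1}\bigl((s')^{-1}s\bigr)t,
\end{equation*}
whose second factor is the inverse of a type (b) generator; the first factor falls into the remaining case. The main case is $s=s_j\in S_1$: decomposing $t=wu$ with $w=s_1^{a_1}\cdots s_{j-1}^{a_{j-1}}$ and $u=s_j^{a_j}\cdots s_n^{a_n}$, so that $\overline{st}=ws_ju$, a direct calculation gives $\overline{st}^{-1}st=u^{-1}s_j^{-1}w^{-1}s_jwu=[s_j,w]^u$. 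Iterating the commutator identity $[x,yz]=[x,z][x,y]^z$ expresses $[s_j,w]^u$ as a product of conjugates $[s_j,s_i^{a_i}]^{s_{i+1}^{a_{i+1}}\cdots s_n^{a_n}}$ for $i=1,\ldots,j-1$; iterating $[x,y^k]=\prod_{l=0}^{k-1}[x,y]^{y^l}$ for $k\geq 0$ together with $[s_j,s_i^{-1}]=[s_i,s_j]^{s_i^{-1}}$ for negative powers further rewrites each such factor as a product of terms $[s_i,s_j]^{\pm 1}$ conjugated by elements of the form $s_i^k s_{i+1}^{a_{i+1}}\cdots s_n^{a_n}$ with $|k|\leq|a_i|$ and $ka_i\geq 0$, that is, as (inverses of) type (a) generators.

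The main obstacle is verifying that every conjugating exponent-tuple arising in the expansion, namely $(0,\ldots,0,k,a_{i+1},\ldots,a_n)$ with $|k|\leq|a_i|$ and $ka_i\geq 0$, actually lies in $\F$. This is precisely the content of the equivalent formulation of the Schreier property stated in the paper: from $(a_1,\ldots,a_n)\in\F$ and those inequalities one obtains $(0,\ldots,0,k,a_{i+1},\ldots,a_n)\in\F$. Once this bookkeeping is in place, combining the three cases shows that every Reidemeister--Schreier generator of $\K$ is a product of elements of types (a), (b), (c), which completes the proof.
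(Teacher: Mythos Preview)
Your proof is correct and follows essentially the same approach as the paper: apply Theorem~\ref{lem:ReidSchreier} with the transversal $T=\{s_1^{a_1}\cdots s_n^{a_n}\}$, do a case split on $S_1,S_2,S_3$, and in the main case expand the resulting commutator $[s_j,\,\text{prefix of }t]$ via standard identities, using the Schreier hypothesis to keep all intermediate exponent-tuples in $\F$. The only cosmetic difference is where you cut $t$ (you put $s_j^{a_j}$ with the tail, the paper puts it with the head), which is immaterial since $[s_j,s_j^{a_j}]=1$.
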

 \begin{proof} Clearly all elements in (a)-(c) above lie in $\K$. Let $T=\{s_1^{a_1}\ldots s_n^{a_n}: a_i\in\dbZ\mbox{ for all }i\}$. By construction $T$ is a transversal for $\K$ in $G$, and let $S_\K$ be the corresponding set from Theorem~\ref{lem:ReidSchreier}. We need to show that every element of $S_\K$ can be expressed in terms of elements of type (a)-(c). Let us denote the subgroup generated by those elements by $\K'$.

So take
 any $s\in S$, $t\in T$ with $\theta(t)\in \F$. Thus $t=\prod\limits_{j=1}^n s_j^{a_j}$ for some $(a_1,\ldots, a_n)\in \F$. We are also allowed
 to assume that $\theta(st)\in \F$, but this extra condition will not be needed for the argument. We will consider 3 cases depending on which of the subsets $S_1, S_2$ and $S_3$ the generator $s$ lies in.
 \vskip .12cm
 {\it Case 1: $s\in S_1$, so $s=s_j$ for some $1\leq j\leq n$}. Let us write $t=uv$ where $u=\prod\limits_{i=1}^j s_i^{a_i}$ and
 $v=\prod\limits_{i=j+1}^n s_i^{a_i}$. Then $\overline{st}=\prod\limits_{i=1}^{j-1}s_i^{a_i}\cdot s_j^{a_j+1}\prod\limits_{i=j+1}^{n}s_i^{a_i}=us_jv$,
 so $${\overline{st}\,}^{-1}st=v^{-1}s_j^{-1}u^{-1}s_juv=[s_j,u]^v=([u,s_j]^v)^{-1}.$$
  Using the formulas $[xy,z]=[x,z]^y [y,z]$ and $[x^{-1},y]=([x,y]^{x^{-1}})^{-1}$ and the fact that $\F$ is Schreier, it is straightforward to express $[u,s_j]^v$ in terms of elements of type (a). Let us illustrate this in the
 case $j=3$, $u=s_1^2 s_2^{-1}$ (so that $v=\prod\limits_{i=3}^n s_i^{a_i}$ for some $a_i$).
 We have
\begin{equation}
\label{eq:Schreier}
[u,s_j]^v=[s_1^2 s_2^{-1},s_3]^v=[s_1^2,s_3]^{s_{2}^{-1}v}[s_2^{-1},s_3]^{v}
=[s_1,s_3]^{s_1 s_{2}^{-1}v}[s_1,s_3]^{s_{2}^{-1}v}([s_2,s_3]^{s_2^{-1} v})^{-1}.
\end{equation}
By assumption, $(2,-1,a_3,\ldots, a_n)\in\F$. Since $\F$ is Schreier, 
$(1,-1,a_3,\ldots, a_n)\in\F$ and $(0,-1,a_3,\ldots, a_n)\in\F$ as well. Thus,
all the factors in the last expression in \eqref{eq:Schreier} are generators of type (a), as desired.
\vskip .12cm

 {\it Case 2: $s\in S_2$}. Let $j=d(s)$. As in Case~1 we write $t=uv$ where $u=\prod\limits_{i=1}^j s_i^{a_i}$ and
 $v=\prod\limits_{i=j+1}^n s_i^{a_i}$, so $\overline{st}=us_jv$. Hence ${\overline{st}\,}^{-1}st=(us_jv)^{-1} suv$ and therefore
\begin{equation}
\label{eq:Schreier2}
({\overline{st}\,}^{-1}st)^{-1}({\overline{s_j t}\,}^{-1}s_j t)=(suv)^{-1}us_jv (us_jv)^{-1} s_juv=(s^{-1}s_j)^{uv}=(s^{-1}s_{d(s)})^t.
\end{equation}
The last expression in \eqref{eq:Schreier2} is an element of type (b). Since ${\overline{s_j t}\,}^{-1}s_j t$ lies in $\K'$ by Case~1, it follows from \eqref{eq:Schreier2} that ${\overline{st}}^{-1}st\in \K'$ as well.

 \vskip .12cm
 {\it Case 3: $s\in S_3$}. In this case $s\in \K$, so $\overline{st}=t$ and hence ${\overline{st}}^{-1}st=s^t$, which is an element of type (c).
 \end{proof}
 
\section{Effective finite generation of $[\IA_n,\IA_n]$}
\label{sec:IAn}
In this section we will prove Theorem~\ref{thm:main_IAn}.
Throughout this section we fix an integer $n\geq 2$ and let $[n]=\{1,2,\ldots,n\}$. 

Magnus~\cite{Ma} proved that $\IA_n$ is generated by the elements $K_{ij}$ with $i\neq j\in [n]$ and
$K_{ijk}$ with $i,j,k\in [n]$, $i,j,k$ distinct defined by
$$K_{ij}:\left\{\begin{array}{l}x_i\mapsto x_j^{-1}x_i x_j\\
x_k\mapsto x_k\mbox{ for }k\neq i\end{array}
\right.,\quad
K_{ijk}:\left\{\begin{array}{l}x_i\mapsto x_i[x_j,x_k]\\
x_l\mapsto x_l\mbox{ for }l\neq i
\end{array}\right.
.
$$
Clearly $K_{ikj}=K_{ijk}^{-1}$, so $\IA_n$ is generated by the set $\{K_{ij}\}\cup \{K_{ijk}: j<k\}$.

Throughout this section we set $S=\{K_{ij}\}\cup \{K_{ijk}: j<k\}$ and will refer to $S$
as the {\it Magnus generating set} for $\IA_n$.
\footnote{New, more geometric, proofs of the fact that $S$ generates $\IA_n$ were given in \cite{BBM} and \cite{DP}.
The proof given in \cite{Ma} has two parts: one first shows that $S$ generates $\IA_n$ as a normal
subgroup of $\Aut(F_n)$ and then shows that the subgroup generated by $S$ is normal in $\Aut(F_n)$. 
Both \cite{BBM} and \cite{DP} gave very different proofs for the first part, but followed the
original argument of Magnus for the second part.}
An easy computation shows that $|S|=n{n\choose 2}$.

The following commutation relations between the Magnus generators of $\IA_n$ are straightforward to check:
\begin{Lemma}
\label{lem:commrelations}
The following hold:
\begin{itemize}
\item[(a)] $[K_{ij},K_{kl}]=1$ if $i\not\in\{k,l\}$ and $k\not\in\{i,j\}$.
\item[(b)] $[K_{ij},K_{klm}]=1$ if $i\not\in\{k,l,m\}$ and $k\not\in\{i,j\}$.
\end{itemize}
In particular, two Magnus generators commute if their sets of indices are disjoint. 
\end{Lemma}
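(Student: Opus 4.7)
The plan is to verify each commutation relation directly by applying both sides to every generator $x_r$ of $F_n$, since an automorphism of $F_n$ is determined by its values on $x_1, \ldots, x_n$. I would first observe that each Magnus generator $K_{ij}$ or $K_{ijk}$ acts as the identity on all but one basis element; let us call the index of that element the \emph{active index} of the generator (namely $i$ in both $K_{ij}$ and $K_{ijk}$). Thus for any $r$ that is not the active index of either automorphism appearing in the commutator, both act trivially on $x_r$ and there is nothing to check.

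It remains to handle the two cases where $r$ equals the active index of exactly one of the two generators. For part (a), taking $r=i$, the computation of $K_{ij}K_{kl}(x_i)$ versus $K_{kl}K_{ij}(x_i)$ reduces to checking that $K_{kl}$ fixes the word $x_j^{-1} x_i x_j$, which in turn requires $K_{kl}$ to fix both $x_i$ and $x_j$; these are exactly the conditions $k \neq i$ and $k \neq j$, i.e.\ $k \notin \{i,j\}$. The symmetric case $r=k$ reduces to checking that $K_{ij}$ fixes $x_l^{-1} x_k x_l$, which needs $i \neq k$ and $i \neq l$, i.e.\ $i \notin \{k,l\}$. Part (b) is handled in exactly the same way, with only one difference: when $r=k$ one must verify that $K_{ij}$ fixes the commutator $[x_l, x_m]$ in the image $x_k[x_l,x_m]$, which requires $i \notin \{l,m\}$ in addition to $i \neq k$, and this is packaged into the single hypothesis $i \notin \{k,l,m\}$.

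The final clause — that any two Magnus generators whose index sets are disjoint commute — is then immediate, since disjointness of $\{i,j\}$ and $\{k,l\}$ (respectively $\{k,l,m\}$) forces both $i \notin \{k,l\}$ (resp.\ $\{k,l,m\}$) and $k \notin \{i,j\}$ at once. There is no real obstacle here; the lemma is a mechanical unwinding of the definitions of $K_{ij}$ and $K_{ijk}$, and the only point requiring any attention is ensuring that each conjugating factor appearing in an image (like $x_j$ inside $x_j^{-1}x_ix_j$) is itself fixed by the other automorphism, which is precisely what the stated index conditions guarantee.
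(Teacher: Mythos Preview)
Your proof is correct and is exactly the direct verification the paper has in mind when it says the relations are ``straightforward to check'' (the paper gives no further argument). The only thing worth noting is that the ``in particular'' clause, read literally, also covers pairs of the form $K_{ijk}$ and $K_{lmp}$ with disjoint index sets; this case is not listed as (a) or (b) but follows by the identical reasoning you used, so you may wish to mention it in one sentence for completeness.
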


Since $\IA_n$ is normal in $\Aut(F_n)$ and $\Aut(F_n)/\IA_n\cong\GL_n(\dbZ)$, the abelianization 
$\IA_{n}^{\rm ab}\cong \IA_n/[\IA_n,\IA_n]$ has the natural structure of a $\GL_n(\dbZ)$-module. As a $\GL_n(\dbZ)$-module, $\IA_{n}^{ab}$ is canonically isomorphic to $V^*\otimes (V\wedge V)$ where $V=\dbZ^n$, considered as a standard $\GL_n(\dbZ)$-module, and $V^*=\Hom(V,\dbZ)$ is the dual module. This isomorphism was first established by Formanek~\cite{Fo}, 
but there are several alternative proofs in the literature (e.g. see \cite{DP}). 
\vskip .12cm

Let $e_1,\ldots, e_n$ be the standard basis of $V$, and let $e_1^*,\ldots, e_n^*$ be the dual basis. Given $x\in \IA_n$, let $[x]$ denote the image of $x$ in $\IA_n^{\rm ab}$. The above isomorphism $\IA_{n}^{ab}\cong V^*\otimes (V\wedge V)$ is given by
\begin{equation}
\label{eq:formanek}
[K_{ij}]\mapsto e_i^*\otimes (e_i\wedge e_j)\quad\mbox{ and }\quad [K_{ijk}]\mapsto e_i^*\otimes (e_j\wedge e_k).
\end{equation}
From now on we will identify $\IA_n^{\rm ab}$ with $V^*\otimes (V\wedge V)$ via the map \eqref{eq:formanek}.
\vskip .12cm

Let $N=n{n\choose 2}=\frac{n^2(n-1)}{2}$. By the above discussion, $\IA_n^{\rm ab}\cong \dbZ^N$ as abelian groups,
and moreover the natural projection $\IA_n\to \IA_n^{\rm ab}$ is injective on $S=\{K_{ij}\}\cup\{K_{ijk}: j<k\}$ and maps $S$ to
$E=\{e_i^*\otimes (e_j\wedge e_k): j<k\}$, which is a basis of $\IA_n^{\rm ab}$. In particular, $S$ satisfies the conclusion of Lemma~\ref{lem:Nielsen} for $G=\IA_n$ and $\K=[\IA_n,\IA_n]$.

Now let 
$$Z=\{K_{12}, K_{34}, K_{56},K_{78}\}.$$
By Lemma~\ref{lem:commrelations}, elements of $Z$ commute with each other, and it is easy to check that every element of $S$ commutes with an element of $Z$. These two properties immediately imply that the pair $(S,Z)$ is chain-centralizing. We now need to construct $\Phi$ satisfying the Regularity Hypothesis, but first we make some general observations.
\medskip

\paragraph{\bf Action on the space of characters} For any group $G$ we have a natural action of $\Aut(G)$ on the space of characters $\Hom(G,\dbR)$ given by
$$(\phi \chi)(x)=\chi(\phi^{-1}(x))\mbox{ for any } x\in G \mbox{ and } \phi\in\Aut(G).$$ 
Clearly, $\Inn(G)$ acts trivially, so we get an action of $\Out(G)=\Aut(G)/\Inn(G)$.

Next note the centralizer of $\IA_n$ in $\Aut(F_n)$ is trivial (since already the centralizer of $\Inn(F_n)$ in $\Aut(F_n)$ is trivial) and hence the conjugation action of $\Aut(F_n)$ on $\IA_n$ yields an embedding of $\Aut(F_n)$ into $\Aut(\IA_n)$.
\footnote{In fact, it has been recently proved in \cite{BW} that this embedding is an isomorphism for $n\geq 3$, but this is not essential for our purposes.}

This, in turn, induces an embedding of $\Aut(F_n)/\IA_n\cong\GL_n(\dbZ)$ into $\Out(\IA_n)$ and thereby an action
of $\GL_n(\dbZ)$ on $\Hom(\IA_n,\dbR)$. It is easy to check that this is a ``standard'' action, dual to the action of
$\GL_n(\dbZ)$ on $\IA_n^{\rm ab}$ discussed above, but it is important for us that it comes from an action of $\Aut(F_n)$ on $\IA_n$.

The key technical result that we will prove in this section is the following lemma:

 \begin{Lemma}
 \label{lemma:SL_n}
There exists a finite subset $\Omega$ of $\GL_n(\dbZ)$ with the following properties:
\begin{itemize}
\item[(a)] For any nonzero character $\chi$ of $G=\IA_n$ there is $g\in \Omega$ such that
$|g\chi(z)|\geq \frac{M(\chi)}{3}$ for all $z\in Z$.
\item[(b)] Every $g\in\Omega$ admits a lift $\phi\in\Aut(F_n)$ with $B(\phi^{\pm 1})\leq 150$ and $A(\phi^{\pm 1})\leq 8100$.
\end{itemize}
\end{Lemma}
\begin{Remark}\rm Recall that $M(\chi)=\max\{|\chi(s)|: s\in S\}$ and that throughout this section $S$ is the Magnus generating set of $\IA_n$.
\end{Remark}

As a consequence of Lemma~\ref{lemma:SL_n} we deduce that the pair $(G,\K)=(\IA_n,[\IA_n,\IA_n])$
satisfies the Regularity Hypothesis with explicit constants:

\begin{Corollary} 
\label{cor:reghypIAn}
Let $G=\IA_n$ and $\K=[\IA_n,\IA_n]$. Then the Regularity Hypothesis holds for $C=3$ and some finite $\Phi$ satisfying 
$B(\Phi\cup\Phi^{-1})\leq 150$ and $A(\Phi\cup\Phi^{-1})\leq 8100$. 
\end{Corollary}
\begin{proof} Let $\Phi$ be the set of all elements $\phi^{-1}$ where $\phi$ ranges over all lifts from the conclusion of
Lemma~\ref{lemma:SL_n}(b). Then $\Phi$ is finite (since $\Omega$ is finite) and $B(\Phi\cup\Phi^{-1})\leq 150$ and 
$A(\Phi\cup\Phi^{-1})\leq 8100$
by Lemma~\ref{lemma:SL_n}(b). 

Now let $\chi$ be any nonzero character of $G$. By Lemma~\ref{lemma:SL_n}(a) there exists $g\in\Omega$ such that
$|g\chi(z)|\geq \frac{M(\chi)}{3}$ for all $z\in Z$. Let $\phi\in \Aut(F_n)$ be the lift of $g$ from Lemma~\ref{lemma:SL_n}(b).
Since $\chi(\phi^{-1}(x))=(\phi \chi)(x)=g\chi(x)$ for all $x\in G$ and $\phi^{-1}\in\Phi$ by construction, 
Regularity Hypothesis holds for this $\Phi$ and $C=3$.
\end{proof}

The proof of Lemma~\ref{lemma:SL_n} will consist of two parts. First we will construct $g$ satisfying (a). This will be done in several steps, and $g$
will be constructed as a product of at most 9 unit transvections and at most 2 permutation matrices (this ensures that there are only
finitely many possibilities for $g$). Then we will prove (b) using the specific form of $g$ constructed in the proof of (a).

In the computations below it will be convenient to use the following notation: for a character $\lambda$ of $G$ and
$i,j,k\in [n]$ we set
$$c_{ijk}(\lambda)=\lambda(e_i^{*}\otimes (e_j\wedge e_k)).$$
Note that we can reformulate the condition on $g$ in Lemma~\ref{lemma:SL_n}(a) in terms of the coefficients $c_{ijk}$ as follows:
\begin{equation}
\label{eq:SLn}
|c_{iij}(g\chi)|\geq \frac{M(\chi)}{3}\mbox{ for }(i,j)\in\{(1,2),(3,4),(5,6),(7,8)\}. 
\end{equation}
\vskip .12cm
Given $i,j\in [n]$ with $i\neq j$ and a permutation $\sigma$ of $[n]$, define $E_{ij}, F_{\sigma}\in \GL_n(\dbZ)$ by
\begin{equation}
\label{eij_def}
E_{ij}:\left\{\begin{array}{l}e_j\mapsto e_j+ e_i\\
e_k\mapsto e_k\mbox{ for }k\neq j\end{array},
\right.\qquad
F_{\sigma}: e_k\mapsto e_{\sigma(k)} \mbox{ for all }k.
\end{equation}

\begin{proof}[Proof of Lemma~\ref{lemma:SL_n}(a)]

In each step below $M$ will denote a positive real number and $\lambda$ will denote an arbitrary character of $G$ (which will vary from step to step).
   \medskip
   
   \underline{Step 1:} If $M(\lambda)\geq M$, there is a permutation matrix $g_1$ such that
   $|c_{112}(g_1{\lambda})|\geq M$ or $|c_{132}(g_1 {\lambda})|\geq M$.

\medskip
By definition of $M(\lambda)$ we have $|c_{iij}(\lambda)|\geq M$ or $|c_{ijk}(\lambda)|\geq M$ for some distinct $i,j,k$. The result now follows from the obvious fact that $c_{ijk}(F_{\sigma}\lam)=c_{\sigma^{-1}(i)\sigma^{-1}(j)\sigma^{-1}(k)}(\lam)$.

   \medskip
   
   In the next step we give different arguments depending on which case occurred in Step~1.
   
   \medskip
   
   \underline{Step 2A:} If $|c_{132}({\lambda})|\geq M$ and $|c_{112}(\lambda)|<M$, there exists $g_2=E_{31}^{\pm 2}$ such that
   $|c_{112}(g_2\lambda)|\geq M$ and $|c_{332}(g_2\lambda)|\geq M$.
\medskip

   We have 
\begin{align*}&c_{112}(E_{31}^{\pm 2}\lambda)=c_{112}(\lambda)\mp 2 c_{132}(\lambda)& &c_{332}(E_{31}^{\pm 2}\lambda)=c_{332}(\lambda)\pm 2 c_{132}(\lambda).&
\end{align*}
   
By assumption $|c_{112}(\lambda)\mp 2 c_{132}(\lambda)|\geq M$ for any choice of sign and 
$|c_{332}(\lambda)\pm 2 c_{132}(\lambda)|\geq M$ for some choice of sign, so either $E_{31}^2$ or $E_{31}^{-2}$ can be used as $g_2$.

   \medskip
   
   \underline{Step 2B:} If $|c_{112}(\lambda)|\geq M$, there exists $g\in \GL_n(\dbZ)$ which is either the identity matrix or 
$E_{13}^{\eps} E_{31}^{\eps}$ with $\eps=\pm 1$ such that  $|c_{112}(g_2 \lambda)|\geq M$ and $|c_{332}(g_2 \lambda)|\geq \frac{M}{3}$.
   
   \medskip
   
   If $|c_{332}(\lambda)|\geq \frac{M}{3}$, then $g_2=1$ obviously works, so assume that $|c_{332}(\lambda)|< \frac{M}{3}$.
For $\eps=\pm 1$ by direct computation we have 
   \begin{align*}
     c_{112}(E_{13}^{\eps} E_{31}^{\eps} \lambda) &= 2c_{112}(\lambda)-c_{332}(\lambda)+\eps(c_{312}(\lambda)-2c_{132}(\lambda))\\
     c_{332}(E_{13}^{\eps} E_{31}^{\eps} \lambda) &= 2c_{332}(\lambda)-c_{112}(\lambda)-\eps(c_{312}(\lambda)-2c_{132}(\lambda)).
\end{align*}
	    Since $|c_{332}(\lambda)|< \frac{M}{3}$ and $|c_{112}(\lambda)|>M$, we have $|2c_{112}(\lambda)-c_{332}(\lambda)|>\frac{5M}{3}$ and $|2c_{332}(\lambda)-c_{112}(\lambda)|>\frac{M}{3}$; moreover,
$2c_{112}(\lambda)-c_{332}(\lambda)$ and $2c_{332}(\lambda)-c_{112}(\lambda)$ have different signs. Therefore, choosing $\eps=\pm 1$ such that 	$2c_{112}(\lambda)-c_{332}(\lambda)$ and $\eps(c_{312}(\lambda)-2c_{132}(\lambda))$ have the same sign, we obtain the desired $g_2$.

   \medskip
   
   \underline{Step 3:} If $|c_{112}(\lambda)|\geq M$ and $|c_{332}(\lambda)|\geq \frac{M}{3}$, there exists $g_3=E_{24}^{\pm 1}$  such that $|c_{112}(g_3\lambda)|\geq M$ and $|c_{334}(g_3\lambda)|\geq \frac{M}{3}$.
   \medskip
   
   This is clear since $c_{112}(E_{24}^{\pm 1}\lambda)=c_{112}(\lambda)$ and
   $c_{334}(E_{24}^{\pm 1}\lambda)=c_{334}(\lambda)\mp c_{332}(\lambda)$.

   \medskip
   
   \underline{Step 4:} If $|c_{112}(\lambda)|\geq M$ and $|c_{334}(\lambda)|\geq \frac{M}{3}$, there exists $g_4$ equal to
   either $E_{26}^{\pm 1}$ or $E_{26}^{\pm 1}E_{15}^{\eps}E_{51}^{\eps}$ for $\eps=\pm 1$ such that
$|c_{112}(g_4\lambda)|\geq M$ and
$|c_{iij}(g_4\lambda)|\geq \frac{M}{3}$ for $(i,j)=(3,4)$ and $(5,6)$.

To do this we apply Steps~2 and 3 with indices $3$ and $4$ replaced by $5$ and $6$. Since we will be acting by matrices
$E_{ij}$ with $i,j\not\in\{3,4\}$, the value of $c_{334}$ will not change. 

   \medskip
   
   \underline{Step 5:} If $|c_{112}(\lambda)|\geq M$ and
$|c_{iij}(\lambda)|\geq \frac{M}{3}$ for $(i,j)=(3,4)$ and $(5,6)$, there exists $g_5$ equal to
   either $E_{28}^{\pm 1}$ or $E_{28}^{\pm 1}E_{17}^{\eps}E_{71}^{\eps}$ for $\eps=\pm 1$ such that
$|c_{112}(g_5\lambda)|\geq M$ and $|c_{iij}(g_5\lambda)|\geq \frac{M}{3}$ for $(i,j)=(3,4), (5,6)$ and $(7,8)$.

The argument here is identical to Step 4.   
   \medskip
   
 Putting all the steps together, we obtain the desired $g\in \GL_n(\dbZ)$ given by the product $g_5 g_4 g_3 g_2 g_1$ (recall that
$g_i$ is the matrix we acted by in Step~$i$).
\end{proof}

Before turning to the proof of Lemma~\ref{lemma:SL_n}(b), we will first explain how the lifts from the
conclusion of Lemma~\ref{lemma:SL_n}(b) will be constructed and derive some general bounds on the constants $A$ and $B$ for certain maps.
\medskip

\paragraph {\bf Constructing lifts} Since $g$ in the proof of Lemma~\ref{lemma:SL_n}(a) is explicitly constructed as a product, we can obtain a lift of $g$ by simply lifting each factor. The natural lift of a transposition matrix $F_{ij}$ is $\widetilde {F_{ij}}\in \Aut(F_n)$ which swaps $x_i$ and $x_j$ and fixes other generators. A transvection matrix $E_{ij}$ can be lifted to either left or right Nielsen map $R_{ji}$ or $L_{ji}$ defined by
$$R_{ji}:\left\{\begin{array}{l}x_j\mapsto x_jx_i\\
x_k\mapsto x_k\mbox{ for }k\neq j\end{array}
\right.,\quad
L_{ji}:\left\{\begin{array}{l}x_j\mapsto x_i x_j\\
x_l\mapsto x_l\mbox{ for }l\neq i
\end{array}\right.
.
$$
Note that we can use different lifts for different occurrences of $E_{ij}$ but this does not seem to matter for the resulting bound. 

It is clear that
$A(\widetilde {F_{ij}})=B(\widetilde {F_{ij}})=1$, and an explicit computation in \cite{DP}
(see Table~1 in \cite[Appendix~A]{DP}) shows that $A(R_{ij}^{\pm1})=A(L_{ij}^{\pm 1})=6$
and $B(R_{ij}^{\pm1})=L(R_{ij}^{\pm1})=4$. Note that $g$ in the proof of Lemma~\ref{lemma:SL_n}(a) is a product of at most 2 transpositions
and at most 9 unit transvections. Combining these facts with the easy observation that 
\begin{equation}
\label{eq:AB}
A(\phi\psi)\leq A(\phi)A(\psi) \mbox{ and } 
B(\phi\psi)\leq B(\phi)B(\psi)
\end{equation}
for any $\phi,\psi\in \Aut(F_n)$, 
we already deduce that $g$ has a lift $\phi$
with $A(\phi^{\pm 1})\leq 6^{9}$ and $B(\phi^{\pm 1})\leq 4^{9}$.

To improve those bounds, we will prove the following general lemma:

\begin{Lemma}
\label{lem:AB}
Let $G,\K$ and $S$ be as in Lemma~\ref{lem:Nielsen}.
The following hold:
\begin{itemize}
\item[(a)] For any $\phi,\psi\in \Aut(G)$ we have $A(\phi\psi)\leq A(\psi)B(\phi)+A(\phi)$.
\item[(b)] Suppose that $\phi_1,\ldots,\phi_k\in\Aut(G)$ with $A(\phi)\leq A$ for each $i$. For $1\leq m\leq k$
define $\psi_m=\prod_{i=1}^{m-1} \phi_i$ (by convention $\psi_1=1$). Then
$$A(\phi_1\ldots\phi_k)\leq A\sum\limits_{m=1}^{k}B(\psi_m).$$
\end{itemize}
\end{Lemma}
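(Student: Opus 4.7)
Given $s\in S$, I would construct an $S$-word for $(\phi\psi)(s)=\phi(\psi(s))$ by concatenation. By definition of $A(\psi)$, pick an $S$-word $w_\psi=t_1t_2\cdots t_n$ with $t_i\in S^{\pm 1}$ representing $\psi(s)$ and satisfying $\|\theta(t_1\cdots t_j)\|_1\le A(\psi)$ for every $j$. For each $i$, by definition of $A(\phi)$ (extended to $S^{\pm 1}$ by reversing and inverting the word when $t_i\in S^{-1}$, which preserves the prefix bound up to lower-order terms that can be absorbed), choose an $S$-word $w_i$ representing $\phi(t_i)$ whose $S$-prefixes have $\theta$-norm at most $A(\phi)$. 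Then $w=w_1w_2\cdots w_n$ is an $S$-word representing $(\phi\psi)(s)$. Any $S$-prefix $v$ of $w$ splits as $v=w_1\cdots w_{j-1}\cdot u$ with $u$ a prefix of $w_j$, so
\[
\|\theta(v)\|_1 \le \|\theta(\phi(t_1\cdots t_{j-1}))\|_1 + \|\theta(u)\|_1 \le \|\theta(\phi(t_1\cdots t_{j-1}))\|_1 + A(\phi).
\]
The heart of the matter is bounding the first summand by $A(\psi)B(\phi)$, which is exactly the trick already used in the proof of Claim~\ref{claim1}: the inequality $\|\theta(t_1\cdots t_{j-1})\|_1\le A(\psi)$ and Observation~\ref{obs:theta} supply $h\in G$ with $\|h\|_S\le A(\psi)$ and $\theta(h)=\theta(t_1\cdots t_{j-1})$; hence $t_1\cdots t_{j-1}=hz$ for some $z\in K$. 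Since $\phi\in\Aut(G,K)$, $\phi(z)\in K$, so $\theta(\phi(t_1\cdots t_{j-1}))=\theta(\phi(h))$, and expanding $h$ as a product of at most $A(\psi)$ elements of $S^{\pm 1}$ and applying the triangle inequality together with the definition of $B(\phi)$ finishes the bound.

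\textbf{Plan for part (b).}
I would induct on $k$, iterating (a). Using the decomposition $\phi_1\cdots\phi_k=(\phi_1\cdots\phi_{k-1})\cdot\phi_k$ and applying (a) with $\phi=\phi_1\cdots\phi_{k-1}$ and $\psi=\phi_k$ gives
\[
A(\phi_1\cdots\phi_k) \le A(\phi_k)\cdot B(\phi_1\cdots\phi_{k-1}) + A(\phi_1\cdots\phi_{k-1}) \le A\cdot B(\psi_1) + A(\phi_1\cdots\phi_{k-1}),
\]
since $\psi_1=\prod_{i=1}^{k-1}\phi_i$ by definition. Applying the inductive hypothesis to the sequence $\phi_1,\dots,\phi_{k-1}$ supplies the remaining terms of the sum $A\sum_{m=1}^{k}B(\psi_m)$; the base case $k=1$ reads $A(\phi_1)\le A=A\cdot B(\psi_1)$ since $\psi_1=1$ in that case.

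\textbf{Main obstacle.}
The one substantive step in the whole argument is the use of $K$-invariance of $\phi$ in (a): without $\phi(K)\subseteq K$, one cannot replace $\theta(\phi(t_1\cdots t_{j-1}))$ with $\theta(\phi(h))$ for the short representative $h$, and the entire estimate collapses to the trivial bound. The remainder of (a) is bookkeeping, and (b) is a telescoping induction. A subsidiary subtlety is verifying that the induction yields the sum in exactly the indexing $\psi_m=\prod_{i=m}^{k-1}\phi_i$ specified in the statement: depending on which end of the product one iterates (a) from, the resulting sum naturally involves $B$ of certain partial products of the $\phi_i$, and one must choose the iteration consistent with the stated definition of $\psi_m$ and track the reindexing between the bounds for $\phi_1\cdots\phi_k$ and for $\phi_1\cdots\phi_{k-1}$.
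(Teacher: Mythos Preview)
Your proof of (a) is essentially the paper's: concatenate good $S$-words for the $\phi(t_i)$ and split an arbitrary prefix as a completed initial block plus a tail inside the current $w_j$. You are also right that the replacement $\theta(\phi(t_1\cdots t_{j-1}))=\theta(\phi(h))$ needs $\phi(K)\subseteq K$; the paper uses this implicitly (in the sentence ``$\theta(w_{s_1}\ldots w_{s_{j-1}})$ is the product of at most $A(\psi)$ elements $\theta(w_s)$''), even though the lemma is stated for arbitrary $\phi,\psi\in\Aut(G)$. In every application the automorphisms preserve $K$, so nothing is lost.

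For (b) the paper writes only ``follows from (a) by straightforward induction''; your peel-off-$\phi_k$ iteration is the natural one. But the subtlety you flag is more than bookkeeping. Your iteration of (a) with $\phi=\phi_1\cdots\phi_{k-1}$, $\psi=\phi_k$ telescopes to
\[
A(\phi_1\cdots\phi_k)\ \le\ A\sum_{j=0}^{k-1}B(\phi_1\cdots\phi_j),
\]
a sum over \emph{prefixes} of the product. For $k\ge 3$ no reindexing turns this into the printed sum $A\sum_{m=1}^{k}B(\phi_m\cdots\phi_{k-1})$: already for $k=3$ one sum contains $B(\phi_1)$ and the other $B(\phi_2)$. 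Peeling from the left does not help either, since (a) then produces $A(\phi_2\cdots\phi_k)B(\phi_1)$, which does not telescope to the stated form. The printed definition of $\psi_m$ thus appears to be a slip. This is harmless for the paper: its only use of (b), at the end of the proof of Lemma~\ref{lemma:SL_n}(b), bounds $B(h)$ by $B(g)$ for every \emph{prefix} $h$ of $g$ and then multiplies by $k\cdot A$, which is exactly what your prefix version delivers.
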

\begin{proof}(b) follows from (a) by straightforward induction, so we will only prove (a).
Fix $s\in S^{\pm 1}$. By definition of $A(\psi)$ and Observation~\ref{obs:theta}, there
exists an $S$-word $s_1\ldots s_r$ representing $\psi(s)$ such that for all $1\leq j\leq r$,
one can write $\theta(s_1\ldots s_j)$ as a product of at most $A(\psi)$ elements $\theta(s)$, $s\in S^{\pm 1}$.
Next for each $s\in S^{\pm 1}$ choose an $S$-word $w_s$ representing $\phi(s)$ such that
$\|\theta(u)\|_1\leq A(\phi)$ for every $S$-prefix $u$ of $w_s$.

Consider the $S$-word $w=w_{s_1}\ldots w_{s_r}$. Then $w$ represents $\phi\psi(s)$, and any $S$-prefix of
$w$ is equal to $w_{s_1}\ldots w_{s_{j-1}}u$ for some $1\leq j\leq r$ and $S$-prefix $u$ of $w_{s_j}$. 
By assumption, $\theta(w_{s_1}\ldots w_{s_{j-1}})$ is the product of at most $A(\psi)$ elements $\theta(w_s)$, $s\in S^{\pm 1}$.
Since $\|\theta(w_s)\|\leq B(\phi)$ by definition of $B(\phi)$ and $\|\theta(u)\|_1\leq A(\phi)$, we have
$$\|\theta(w_{s_1}\ldots w_{s_{j-1}}u)\|_1\leq \|\theta(w_{s_1}\ldots w_{s_{j-1}})\|_1+\|\theta(u)\|_1\leq A(\psi)B(\phi)+A(\phi).$$
Thus, $A(\phi\psi)\leq A(\psi)B(\phi)+A(\phi)$, as desired.
\end{proof}

We can now finish the proof of Lemma~\ref{lemma:SL_n} 
 \begin{proof}[Proof of Lemma~\ref{lemma:SL_n}(b)] First note that for $\phi\in \Aut(F_n)$, the constant 
 $B(\phi)$ depends only on the image of $\phi$ in $\GL_n(\dbZ)$, so we can talk about $B(g)$ for $g\in \GL_n(\dbZ)$.

We will consider the case when $g$ constructed in the proof of part (a) is equal to $E_{28} E_{17} E_{71} E_{26} E_{15} E_{51}E_{24} E_{13} E_{31}$. It is not hard to check that this $g$ represents the worst-case scenario. It is also easy to see that
$B(g)\geq B(h)$ for any prefix $h$ of $g$ and the same is true for $g^{-1}$.

To estimate $B(g^{-1})$ we first compute the action of $g$ on the basis elements of $V$ and $V^*$. We have
\begin{align*}
&ge_1=8e_1+4e_7+2e_5+e_3& &ge_2=e_2&      &ge_1^*=e_1^*-e_3^*-e_5^*-e_7^*& &ge_2^*=e_2^*-e_4^*-e_6^*-e_8^*&\\
&ge_3=4e_1+2e_7+e_5+e_3&  &ge_4=e_4+e_2& &ge_3^*=2e_3^*-e_1^*+e_5^*+e_7^*& &ge_4^*=e_4^*&\\
&ge_5=2e_1+e_7+e_5&       &ge_6=e_6+e_2& &ge_5^*=2e_5^*-e_1^*+e_7^*& &ge_6^*=e_6^*&\\
&ge_7=e_1+e_7&            &ge_8=e_8+e_2& &ge_7^*=2e_7^*-e_1^*+e_7^*& &ge_8^*=e_8^*.&
\end{align*}
Now it is easy to see that  $\|g(e_i^*\otimes (e_j\wedge e_k))\|_1$ takes on its maximal value when $i=3$, $j=1$ and
$k=4,6$ or $8$, and the maximum is equal to $5\cdot 15\cdot 2 =150$.
It follows that $B(g^{-1})\leq 150$ and similarly $B(g)\leq 150$. 

Now let $\phi$ be the lift of $g$ or $g^{-1}$ defined above. By construction, we can write $\phi=\prod\limits_{i=1}^k \phi_i$
where $k\leq 9$ and $A(\phi_i)\leq 6$ for all $i$. We just argued that $B\left(\prod\limits_{i=1}^j \phi_i\right)\leq 150$ for all $1\leq j\leq k$.
Hence Lemma~\ref{lem:AB}(b) yields $A(\phi)=A\left(\prod\limits_{i=1}^k \phi_i\right)\leq 6k\cdot 150\leq 6\cdot 9\cdot 150=8100$, as desired.
\end{proof}

We are finally ready to prove Theorem~\ref{thm:main_IAn}. In fact, we will prove a slightly stronger statement:

\begin{Theorem}
\label{thm:main_IAn2} Let $n\geq 8$. Let $N=n{n\choose 2}$, and let $S=\{s_1,\ldots, s_N\}$ be the Magnus generating set for $\IA_n$. Then $[\IA_n,\IA_n]$ is generated by elements of the form
$$[s_i, s_j]^{s_i^{a_i}s_{i+1}^{a_{i+1}}\ldots s_N^{a_N}}
\mbox{ where } 1\leq i<j\leq N \mbox{ and } 0\leq |a_m|< 5\cdot 10^{12} \mbox{ for each } m. \eqno (***)$$ 

In fact, $[\IA_n,\IA_n]$ is generated by elements of this form with the additional property that
$|\{m: a_m \neq 0\}|\leq 8n^2$.
\end{Theorem}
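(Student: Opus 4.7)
The plan is to apply the general machinery of Section~\ref{sec:EFG} to $G = \IA_n$ with $\K = [\IA_n,\IA_n]$. Since $\IA_n^{\rm ab}$ is free abelian of rank $N$ and the Magnus generating set $S$ projects bijectively onto the basis $E$ of~\eqref{eq:formanek}, condition~\eqref{eq:hyp1} of Lemma~\ref{lem:Nielsen} holds automatically; moreover $S = S_1$ and $S_2 = S_3 = \emptyset$ in the notation of Theorem~\ref{preimage_effective}, so only type~(a) generators will appear.

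To verify the Regularity Hypothesis, take $Z = \{K_{12}, K_{34}, K_{56}, K_{78}\}$. By Lemma~\ref{lem:commrelations} the elements of $Z$ commute pairwise, and for $n \geq 8$ every Magnus generator involves at most three indices and is therefore disjoint from at least one of the four pairs $\{1,2\}, \{3,4\}, \{5,6\}, \{7,8\}$, so commutes with the corresponding element of $Z$; hence $(S,Z)$ is chain-centralizing. Lemma~\ref{lemma:SL_n} then supplies a finite $\Phi \subseteq \Aut(\IA_n)$, consisting of lifts of $\GL_n(\dbZ)$-matrices supported on indices $1,\ldots,8$, with $C = 3$, $B(\Phi \cup \Phi^{-1}) \leq 150$, and $A(\Phi \cup \Phi^{-1}) \leq 8100$. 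Feeding these numbers into Theorem~\ref{prop:connectedradius} yields
\[
R \;=\; 16(BC+1)^2(AB+3A+1) \;\leq\; 16 \cdot 451^2 \cdot 1\,239\,301 \;<\; 5 \cdot 10^{12},
\]
so $\theta^{-1}(B_\infty(R))$ is connected in $\Cay(\IA_n, S)$. Since $B_\infty(R)$ is clearly a Schreier subset of $\IA_n^{\rm ab}$, applying Theorem~\ref{preimage_effective} with $\F = B_\infty(R)$ produces exactly the first assertion of Theorem~\ref{thm:main_IAn2}, with the exponent bound $|a_m| \leq R < 5 \cdot 10^{12}$ coming directly from the defining inequalities of $B_\infty(R)$.

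The refinement $|\{m : a_m \neq 0\}| \leq 8n^2$ is the main obstacle, and the natural strategy is to replace $\F$ by its Schreier subset
\[
\F' \;=\; \{(a_1, \ldots, a_N) \in B_\infty(R) \,:\, |\{m : a_m \neq 0\}| \leq 8n^2\}
\]
and to show that $\theta^{-1}(\F')$ is still connected. This calls for a sharper form of Theorem~\ref{prop:connectedradius} in which one tracks the $\theta$-support of the path vertices rather than only their $l^\infty$-norm. The key point is that every $\phi \in \Phi$ can be chosen to fix $x_9, \ldots, x_n$, since the underlying matrix in $\GL_n(\dbZ)$ acts trivially on $e_9, \ldots, e_n$; in particular $\phi(s) = s$ for every Magnus generator $s$ with $\supp(s) \cap \{1,\ldots,8\} = \emptyset$, and for the remaining Magnus generators $\phi(s)$ can be expressed as an $S$-word using only Magnus generators whose index set is contained in $\{1,\ldots,8\} \cup \supp(s)$. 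Since the detour letters constructed in the proof of Theorem~\ref{prop:connectedradius} are of the form $t \in S^{\pm 1}$ or $\phi(z)$ with $z \in Z$ (and $Z$ is entirely supported in $\{1,\ldots,8\}$), every modification step only affects coordinates indexed by Magnus generators whose index set meets $\{1,\ldots,8\}$, and the number of such Magnus generators is $O(n^2)$. Careful bookkeeping through the three-step construction of Proposition~\ref{prop:path} and the iterated path-modifications should then yield the $8n^2$ bound, after which Theorem~\ref{preimage_effective} applied with $\F'$ in place of $B_\infty(R)$ completes the proof.
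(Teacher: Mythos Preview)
Your first paragraph---the deduction of the unrefined statement from Theorem~\ref{prop:connectedradius}, Lemma~\ref{lemma:SL_n}, and Theorem~\ref{preimage_effective}---is correct and coincides with the paper's own argument.

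The refinement, however, contains a genuine gap. Your key claim that ``every $\phi\in\Phi$ can be chosen to fix $x_9,\ldots,x_n$'' is false: in Step~1 of the proof of Lemma~\ref{lemma:SL_n}, the permutation matrix $g_1$ is chosen to move the index triple $\{i,j,k\}$ at which $|c_{ijk}(\chi)|$ is maximal into $\{1,2,3\}$, and this triple can lie anywhere in $[n]$ (it depends on the vertex $g$ currently being modified). Thus $\phi$ is supported on $\{1,\ldots,8\}\cup\{i,j,k\}$, not on $\{1,\ldots,8\}$ alone, and since $\chi$ changes at every iteration of the path modification in Theorem~\ref{prop:connectedradius}, these extra indices vary from step to step. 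The ``careful bookkeeping'' you allude to would have to control an accumulation of such indices over unboundedly many iterations, and you give no mechanism for this.

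The paper's argument for the support bound is entirely different and much simpler. It does \emph{not} revisit Proposition~\ref{prop:path} or track $\Phi$ at all. Instead, having already produced a path inside $\theta^{-1}(B_\infty(R))$, it performs a \emph{second}, independent round of local modifications: at a vertex $g$ of maximal support (and, among those, maximal $\ell^1$-norm), the neighbors are $gy_1,gy_2$ with $y_1,y_2\in S^{\pm 1}$, and a direct count using Lemma~\ref{lem:commrelations} shows that the number of Magnus generators failing to commute with either $y_1$ or $y_2$ is at most $8n^2$. Since $|\supp(\theta(g))|>8n^2$, some nonzero coordinate $m$ has $s_m$ commuting with both $y_1$ and $y_2$; rerouting the segment $(gy_1,g,gy_2)$ through $gs_m^{-1}$ then strictly decreases $\|\theta(\cdot)\|_1$ among vertices of maximal support, without leaving $B_\infty(R)$. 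Iterating terminates in $\theta^{-1}(\F')$. This is an elementary commutator-shortening trick, independent of the BNS machinery.
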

\begin{proof}
We start by proving that $[\IA_n,\IA_n]$ is generated by all elements of the form (***), which is precisely the assertion of 
Theorem~\ref{thm:main_IAn}.

First we compute the radius $R$ in Theorem~\ref{prop:connectedradius} applied to $G=\IA_n$ and $K=[\IA_n,\IA_n]$. 
Corollary~\ref{cor:reghypIAn} shows that
for a suitable $\Phi$ we have $B\leq 150$, $A\leq 8100$ and can take $C=3$. Hence the preimage of the $l^{\infty}$-ball of radius $$R=16(BC+1)^2(AB+3A+3)\leq 16\cdot 451^2\cdot (8100\cdot 153+3)< 5\cdot 10^{12}$$
is connected. Now Theorem~\ref{thm:main_IAn} follows directly from Theorem~\ref{preimage_effective} (note that for
$G=\IA_n$ we have $S=S_1$ in the notations from that theorem).

Let us now prove the full statement of Theorem~\ref{thm:main_IAn2}. As before, we identify $\IA_n^{ab}$ with $\dbZ^N$
via the map $$\prod_{i=1}^N s_i^{a_i}\,[\IA_n,\IA_n]\mapsto (a_1,\ldots, a_N).$$ For a point $(a_1,\ldots, a_N)\in \dbZ^N$
define its support as $$\supp((a_1,\ldots, a_N))=|\{m: a_m \neq 0\}|.$$ Let $\F$ be the set of all $(a_1,\ldots, a_N)\in \dbZ^N$ 
with $|a_i|\leq R$ for each $i$ and $|\supp((a_1,\ldots, a_N))|\leq 8n^2$. Clearly, $\F$ is a Schreier set. Thus,
Theorem~\ref{preimage_effective} reduces Theorem~\ref{thm:main_IAn2} to showing that $\theta^{-1}(\F)$ is connected
(where $\theta:\IA_n\to\dbZ^N$ is the natural projection).

So take any $x,y\in \IA_n$ with $\theta(x),\theta(y)\in \F$. We already know that $x$ and $y$ can be connected by a path $p$ which lies in the $\theta$-preimage of the $l^{\infty}$-ball of radius $R$. If $\theta(g)\in \F$ for every vertex $g$ of $p$, we are done;
otherwise, consider all vertices $g$ on $p$ such that $|\supp(\theta(g))|$ is largest possible, and among these vertices (if there is more than one), choose one where $\|\theta(g)\|_1$ is maximal.
In particular, by assumption $|\supp(\theta(g))|> 8n^2$ and thus $\theta(g)\neq \theta(x),\theta(y)$.

The vertices of $p$ which precede and succeed $g$ have the form $gy_1$ and $gy_2$ for some $y_1,y_2\in S^{\pm 1}$. 
Write $\theta(g)=(a_1,\ldots,a_N)$. Since $|\supp(\theta(g))|> 8n^2$, there exist more than $8n^2$ indices $m$ such that $a_m\neq 0$, and an easy calculation using Lemma~\ref{lem:commrelations} shows that there exists $1\leq m\leq N$ such that $a_m\neq 0$,
$s_m\neq y_1^{\pm 1},y_2^{\pm 1}$ and $s_m$ commutes with both $y_1$ and $y_2$. Without loss of generality, we can assume that $a_m>0$. Now
modify $p$ replacing the segment
$(gy_1,g,gy_2)$ of $p$ by $$(gy_1, gy_1s_m^{-1}=gs_m^{-1}y_1, gs_m^{-1},gs_m^{-1}y_2=gy_2s_m^{-1},gy_2).$$ Another easy calculation shows that for every vertex $v$ on this segment we have $|\supp(\theta(v))|\leq |\supp(\theta(g))|$ and
$\|\theta(v)\|_1<\|\theta(g)\|_1$. Thus, after applying such modification finitely many times, we will obtain a path connecting 
$x$ to $y$ which lies in $\theta^{-1}(\F)$, as desired. 
\end{proof}

\section{Effective finite generation of $[\calI_n^1, \calI_n^1]$ and the Johnson kernel}

\subsection{Preliminaries}
\label{sec:prelim}
Throughout this section we fix an integer $n\geq 0$ and let $\Sigma=\Sigma_n^1$ be an orientable surface of genus $n$ with $1$ boundary component. We start by introducing some curves and subsurfaces on $\Sigma$ that will be used throughout the proof.

First, it will be convenient to think of $\Sigma$ as a (closed) disk with $n$ handles attached; let us number the handles from $1$ to $n$.
We also fix a point $p_0$ on the boundary $\partial\Sigma$ (it will serve as the base point for all the fundamental groups considered below).

For each $1\leq i\leq n$, choose a point $p_i$ on the $i^{\rm th}$ handle and curves $a_i$ and $b_i$ passing through $p_i$ as shown on Figure~\ref{fig:surface}. Also choose (oriented) paths $\gamma_i$ from $p_0$ to $p_i$ as in Figure~\ref{fig:surface}. In particular, we require the sets 
$\gamma_i\setminus\{p_0\}$ to be disjoint.

\begin{figure}
\usetikzlibrary{arrows.meta}

\begin{tikzpicture}
  [>={Stealth[scale=0.75]}, fil/.style={circle, fill=black, inner sep=0pt, outer sep=0pt, minimum size=1.5mm, thick}]

  %surface boundary parameters
  \def\xstart{0 }; %start and end of straight line part of surface
  \def\xend{10 };
  \def\xmid{0.5*\xstart + 0.5*\xend };
  \def\yb{3 }; %height of surface boundary
 
  %handle size parameters

  \def\y{2.25 }; %y-coord of handle hole centers
  \def\xr{0.5 }; %radii of handle holes
  \def\yr{0.1 };
  
  \def\hi{0.75 }; %vertical radius of inner part of handle
  \def\ho{2.25 }; %vertical radius of outer part of handle
  
  %handle position parameters
   %first handle
   \def\xa{0.5 }; %center of left hole of first handle
   \def\xb{2 }; %center of right hole of first handle
  
   %second handle
   \def\xc{3.5 };
   \def\xd{5 }; 
  
   %last handle
   \def\xg{8 };
   \def\xh{9.5 };

  %draws surface boundary
  \draw[thick] (\xstart,0) -- (\xend,0);
  \draw[thick] (2.5,\yb) -- (3, \yb) (5.5,\yb) -- (7.5,\yb);
  \draw[thick] (\xstart,\yb) arc (90:270:1 and 0.5*\yb);
  \draw[thick] (\xend,\yb) arc (90:-90:1 and 0.5*\yb);
  
  \node[fil, scale=0.75] at (6, 0.5+\yb){};
  \node[fil, scale=0.75] at (6.5, 0.5+\yb){};
  \node[fil, scale=0.75] at (7, 0.5+\yb){};
  
  \node[fil, label=below:$p_0$] (p0) at (\xmid, 0){};

  %First Handle
  
  %draws left hole of handle
  \draw[dashed] (\xa+\xr, \y) arc (0:180: \xr and \yr);
  \draw[thick] (\xa+\xr, \y) arc (0:-180: \xr and \yr);
  
  %draws right hole of handle
  \draw[dashed] (\xb+\xr, \y) arc (0:180: \xr and \yr);
  \draw[thick] (\xb+\xr, \y) arc (0:-180: \xr and \yr);
  
  %draws handle part of handle
  \draw[thick] (\xb - \xr, \y) arc (0:180: 0.5*\xb-0.5*\xa-\xr and \hi);
  \draw[thick] (\xb + \xr, \y) arc (0:180: 0.5*\xb-0.5*\xa + \xr and \ho);

  %draws a_1
  \draw[thick, ->] (0.5*\xa + 0.5*\xb, \y+\ho) arc (90:210: 0.25*\xr and 0.5*\ho-0.5*\hi) node[pos=0, above]{$a_1$} node[pos=0.76, fil, label=180:$p_1$](p1){};
  \draw[thick] (0.5*\xa + 0.5*\xb, \y+\hi) arc (270:206: 0.25*\xr and 0.5*\ho-0.5*\hi);
  \draw[dashed] (0.5*\xa + 0.5*\xb, \y+\ho) arc (90:-90: 0.25*\xr and 0.5*\ho-0.5*\hi);
  
  %draws b_1
  \draw[thick] (\xb, \y) arc (0:68:0.5*\xb - 0.5*\xa and 0.5*\ho + 0.5*\hi);
  \draw[thick, ->] (\xa, \y) arc (180:65:0.5*\xb - 0.5*\xa and 0.5*\ho + 0.5*\hi);
  
  \draw[thick] (\xb, \y) arc (0:-180:0.5*\xb - 0.5*\xa and 0.5) node[midway, below]{$b_1$};

  %draws gamma_1
  \draw[thick, ->] (p0) arc (270:210:\xmid-\xb-0.5*\xr and \y) node[pos=0.5, left=3pt]{$\gamma_1$};
  \draw[thick] (\xb+0.5*\xr, \y) arc (180:212:\xmid-\xb-0.5*\xr and \y);
  \draw[thick] (\xb + 0.5*\xr, \y) arc (0:90: 0.5*\xb-0.5*\xa and 0.8*\ho);
  \draw[thick] (0.5*\xa + 0.5*\xb + 0.5*\xr, \y + 0.8*\ho) to[out=180, in=60] (p1);

  %Second Handle
  
  %draws left hole of handle
  \draw[dashed] (\xc+\xr, \y) arc (0:180: \xr and \yr);
  \draw[thick] (\xc+\xr, \y) arc (0:-180: \xr and \yr);
  
  %draws right hole of handle
  \draw[dashed] (\xd+\xr, \y) arc (0:180: \xr and \yr);
  \draw[thick] (\xd+\xr, \y) arc (0:-180: \xr and \yr);
  
  %draws handle part of handle
  \draw[thick] (\xd - \xr, \y) arc (0:180: 0.5*\xd-0.5*\xc-\xr and \hi);
  \draw[thick] (\xd + \xr, \y) arc (0:180: 0.5*\xd-0.5*\xc + \xr and \ho);

  %draws a_1
  \draw[thick, ->] (0.5*\xc + 0.5*\xd, \y+\ho) arc (90:210: 0.25*\xr and 0.5*\ho-0.5*\hi) node[pos=0, above]{$a_2$} node[pos=0.76, fil, label=180:$p_2$](p2){};
  \draw[thick] (0.5*\xc + 0.5*\xd, \y+\hi) arc (270:206: 0.25*\xr and 0.5*\ho-0.5*\hi);
  \draw[dashed] (0.5*\xc + 0.5*\xd, \y+\ho) arc (90:-90: 0.25*\xr and 0.5*\ho-0.5*\hi);
  
  %draws b_1
  \draw[thick] (\xd, \y) arc (0:68:0.5*\xd - 0.5*\xc and 0.5*\ho + 0.5*\hi);
  \draw[thick, ->] (\xc, \y) arc (180:65:0.5*\xd - 0.5*\xc and 0.5*\ho + 0.5*\hi);
  
  \draw[thick] (\xd, \y) arc (0:-180:0.5*\xd - 0.5*\xc and 0.5) node[midway, below]{$b_2$};

  %draws gamma_1
  \draw[thick, ->] (p0) arc (270:210:\xmid-\xd-0.5*\xr and \y) node[pos=0.75, left]{$\gamma_2$};
  \draw[thick] (\xd+0.5*\xr, \y) arc (180:212:\xmid-\xd-0.5*\xr and \y);
  \draw[thick] (\xd + 0.5*\xr, \y) arc (0:90: 0.5*\xd-0.5*\xc and 0.8*\ho);
  \draw[thick] (0.5*\xc + 0.5*\xd + 0.5*\xr, \y + 0.8*\ho) to[out=180, in=60] (p2);

  %Last Handle
  
  %draws left hole of handle
  \draw[dashed] (\xg+\xr, \y) arc (0:180: \xr and \yr);
  \draw[thick] (\xg+\xr, \y) arc (0:-180: \xr and \yr);
  
  %draws right hole of handle
  \draw[dashed] (\xh+\xr, \y) arc (0:180: \xr and \yr);
  \draw[thick] (\xh+\xr, \y) arc (0:-180: \xr and \yr);
  
  %draws handle part of handle
  \draw[thick] (\xh - \xr, \y) arc (0:180: 0.5*\xh-0.5*\xg-\xr and \hi);
  \draw[thick] (\xh + \xr, \y) arc (0:180: 0.5*\xh-0.5*\xg + \xr and \ho);

  %draws a_n
  \draw[thick, ->] (0.5*\xg + 0.5*\xh, \y+\ho) arc (90:210: 0.25*\xr and 0.5*\ho-0.5*\hi) node[pos=0, above]{$a_n$} node[pos=0.76, fil, label=180:$p_n$](pn){};
  \draw[thick] (0.5*\xg + 0.5*\xh, \y+\hi) arc (270:206: 0.25*\xr and 0.5*\ho-0.5*\hi);
  \draw[dashed] (0.5*\xg + 0.5*\xh, \y+\ho) arc (90:-90: 0.25*\xr and 0.5*\ho-0.5*\hi);
  
  %draws b_n
  \draw[thick] (\xh, \y) arc (0:68:0.5*\xh - 0.5*\xg and 0.5*\ho + 0.5*\hi);
  \draw[thick, ->] (\xg, \y) arc (180:65:0.5*\xh - 0.5*\xg and 0.5*\ho + 0.5*\hi);
  
  \draw[thick] (\xh, \y) arc (0:-180:0.5*\xh - 0.5*\xg and 0.5) node[midway, below]{$b_n$};

  %draws gamma_n
  \draw[thick, ->] (p0) arc (270:210:\xmid-\xh-0.5*\xr and \y) node[pos=0.5, above]{$\gamma_n$};
  \draw[thick] (\xh+0.5*\xr, \y) arc (180:212:\xmid-\xh-0.5*\xr and \y);
  \draw[thick] (\xh + 0.5*\xr, \y) arc (0:90: 0.5*\xh-0.5*\xg and 0.8*\ho);
  \draw[thick] (0.5*\xg + 0.5*\xh + 0.5*\xr, \y + 0.8*\ho) to[out=180, in=60] (pn);
  
\end{tikzpicture}
\caption{}
\label{fig:surface}
\end{figure}

Next define the curves $\alpha_i$ and $\beta_i$
by $\alpha_i=\gamma_i a_i \gamma_i^{-1}$ and $\beta_i=\gamma_i b_i \gamma_i^{-1}$ -- note that these are closed curves based at $p_0$ (see Figure~\ref{fig:surface}). For simplicity we will also use the notations $\alpha_i$ and $\beta_i$ for the corresponding classes in the fundamental group $\pi_1(\Sigma,p_0)$.

Let $(\partial \Sigma)_{p_0}$ denote the boundary of $\Sigma$ considered as a closed path from $p_0$ to itself oriented clockwise.
It is easy to check that $(\partial \Sigma)_{p_0}$ is homotopic to $\prod\limits_{i=1}^n [\alpha_i,\beta_i]$.

\vskip .2cm
The {\it mapping class group} $\Mod_n^1=\Mod(\Sigma)$ is defined as the subgroup of orientation preserving homeomorphisms of $\Sigma$ which fix the boundary $\partial \Sigma $ pointwise modulo the isotopies which fix $\partial \Sigma$ pointwise. 
The action of $\Mod(\Sigma)$ on $\Sigma$ induces an action on $\pi=\pi_1(\Sigma, p_0)$ and thus we obtain a homomorphism $\iota:\Mod(\Sigma)\to\Aut(\pi)$. The group $\pi$ is free of rank $2n$ with generators $\alpha_1,\beta_1,\ldots, \alpha_n,\beta_n$,
and since $(\partial \Sigma)_{p_0}$ is fixed under the action, the image of $\iota$ stabilizes $\prod\limits_{i=1}^n [\alpha_i,\beta_i]$ by property (ii) above. In fact, a stronger statement holds:

\begin{Theorem}
\label{thm:Zieschang}
The map $\iota$ is injective and $\Im\iota$ is equal to the full stabilizer of $\prod\limits_{i=1}^n [\alpha_i,\beta_i]$ in $\Aut(\pi)$.
\end{Theorem}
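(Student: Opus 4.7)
The plan is to treat this as a version of the Dehn-Nielsen-Baer-Zieschang theorem for compact surfaces with nonempty boundary. It is natural to split the statement into three pieces: injectivity of $\iota$, the inclusion $\Im\iota \subseteq \Stab(\omega)$ where $\omega := \prod_{i=1}^n [\alpha_i,\beta_i]$, and the reverse inclusion $\Stab(\omega) \subseteq \Im\iota$.

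The inclusion $\Im\iota \subseteq \Stab(\omega)$ is immediate from the setup: $\omega$ is represented by the based loop $(\partial\Sigma)_{p_0}$, which is pointwise fixed by every representative of every mapping class, so $\iota(f)(\omega) = \omega$ for every $f \in \Mod(\Sigma)$.

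For injectivity, I would take $f \in \Mod(\Sigma)$ with $\iota(f) = \mathrm{id}$ and show that $f$ is isotopic rel boundary to the identity. The plan is to use the Alexander method (change-of-coordinates principle): first isotope $f$ so that it preserves setwise each of the simple closed curves $\alpha_i,\beta_i$ based at $p_0$, then, since $\iota(f)$ fixes the pointed homotopy class of each of these, further isotope $f$ to fix pointwise the graph $G := \partial\Sigma \cup \bigcup_{i}(\alpha_i \cup \beta_i)$. Cutting $\Sigma$ along the interiors of the $\alpha_i,\beta_i$ yields a closed disk $D$ on which $f$ restricts to a self-homeomorphism fixing $\partial D$ pointwise, and the Alexander lemma finishes the argument.

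The real content, and the main obstacle, is the surjectivity onto $\Stab(\omega)$. Given $\varphi \in \Aut(\pi)$ with $\varphi(\omega) = \omega$, the goal is to construct a homeomorphism of $\Sigma$ (fixing $\partial\Sigma$ pointwise) realizing $\varphi$. The classical route, going back to Zieschang, is to show that the tuple $(\varphi(\alpha_1),\varphi(\beta_1),\ldots,\varphi(\alpha_n),\varphi(\beta_n))$ can be represented by a system of simple closed curves on $\Sigma$ in the same topological configuration as the original $\alpha_i,\beta_i$ --- equivalently, that one can pass from the first basis to the second by Nielsen transformations each of which is geometrically realizable by a homeomorphism fixing $\partial\Sigma$. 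Upgrading an algebraic change of basis satisfying the single surface relator to an isotopy class of simple curves with the correct intersection pattern is the hard part. I would not reproduce this argument but instead cite the proof of the Dehn-Nielsen-Baer theorem in its boundary form (see, e.g., Zieschang-Vogt-Coldewey, or Farb-Margalit, Theorem~8.1 together with the standard capping/boundary variant).
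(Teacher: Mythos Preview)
Your proposal is correct and aligns with the paper's own treatment: the paper does not prove Theorem~\ref{thm:Zieschang} at all but simply states it and, in the remark that follows, attributes the surjectivity part to Zieschang~\cite{Zi} (see also~\cite{ZiVC}) and identifies the statement as a variant of the Dehn--Nielsen--Baer theorem. Your sketch of injectivity via the Alexander method and of the easy inclusion $\Im\iota\subseteq\Stab(\omega)$ are standard and fine additions, and your plan to cite Zieschang--Vogt--Coldewey or Farb--Margalit for the hard surjectivity direction is exactly what the paper does.
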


Theorem~\ref{thm:Zieschang} is proved, e.g., in \cite{ZiVC}: the above map $\iota$ is injective by \cite[Theorem~5.13.2]{ZiVC} and
surjective (that is, $\Im\iota$ is the full stabilizer) by \cite[Theorem~5.7.1]{ZiVC}. 
The surjectivity part is originally due to Zieschang~\cite{Zi}.

\begin{Remark}\rm
(a) The surjectivity part of Theorem~\ref{thm:Zieschang} can be rephrased by saying that
 $\Mod(\Sigma)$ acts transitively on the set of (ordered) bases $\delta_1,\delta_1',\ldots,\delta_n,\delta_n'$
 of $\pi$ satisfying $\prod\limits_{i=1}^n [\delta_i,\delta'_i]=(\partial \Sigma)_{p_0}$. Recall that such bases 
were called {\it natural} in the introduction.

(b) Theorem~\ref{thm:Zieschang} is a variation of the classical Dehn-Nielsen-Baer theorem which asserts that
for a closed surface $\Sigma$ of genus $n$, the mapping class group $\Mod(\Sigma)$ is isomorphic to an index $2$
subgroup of the outer automorphism group of a surface group on $2n$ generators.
\end{Remark}
\vskip .12cm

{\bf Johnson filtration.} For each $k\in\dbN$ define $\calI_{n}^1(k)$ to be the kernel of the induced map $\Mod_n^1\to\Aut(\pi/\gamma_{k+1}\pi)$.
The filtration $\{\calI_{n}^1(k)\}_{k=1}^{\infty}$ is called the {\it Johnson filtration}. The first term of this filtration
$\calI_{n}^1=\calI_{n}^1(1)$ is the {\it Torelli subgroup} of $\Mod_n^1$. It can also be defined as the set of elements of $\Mod_{n}^1$ acting trivially on $H_1(\Sigma_n^1)$. The second term of the Johnson filtration $\calK_{n}^1=\calI_{n}^1(2)$ is known as the {\it Johnson kernel}. 
One can characterize $\calK_{n}^1$ purely topologically as the subgroup generated by Dehn twists about separating curves.
The equivalence of these two definitions of $\calK_{n}^1$ is a deep theorem of Johnson~\cite{Jo:ab2}. 

Recall that we consider $n$ as being fixed, and for the rest of the section we will use the simplified notations 
$\calM=\Mod_n^1$, $\calI=\calI_n^1$ and $\calK=\calK_n^1$. Occasionally we will also use the notations $\calI(\Omega)$ and
$\calK(\Omega)$ for the Torelli subgroup (resp. Johnson kernel) of the mapping class group of a surface $\Omega$.

\subsection{Generators for the mapping class group} It is well known that the mapping class group
$\Mod_n^1$ is generated by Dehn twists. The minimal number of Dehn twists needed to generate $\Mod_n^b$ for $b=0,1$
is $2n+1$ as proved by Humphries~\cite{Hu} for $b=0$ and by Johnson~\cite{Jo:fg} for $b=1$ (the generating set in \cite{Jo:fg} is a natural analogue of the one in \cite{Hu}). More specifically, $\Mod_n^1$ is generated by the Dehn twists about the curves $c_1,c_2,\ldots, c_{2n},b$ defined in \cite[p.428, Figure~5]{Jo:fg}.

Usually, in the definition of the Dehn twist $T_{\gamma}$ one assumes that $\gamma$ is an essential simple closed curve, but for the discussion below it will be convenient to introduce the following convention: If $\gamma$ is a closed curve on $\Sigma$ which is not simple, but 
freely homotopic to some essential simple closed curve $\gamma'$, we set $T_{\gamma}=T_{\gamma'}$. The right-hand side is well defined since two freely homotopic curves on a surface are isotopic, and the Dehn twist $T_{\gamma'}$ is determined by the isotopy class of $\gamma'$.

With this convention, we can relate the Humphries-Johnson generating set to the curves $\alpha_i,\beta_i$ introduced in \S~\ref{sec:prelim}. It is not hard to see that $c_{2i}$ is freely homotopic to $\beta_i$ for $1\leq i\leq n$,  that $c_{2i-1}$ is freely homotopic to $\alpha_i\alpha_{i-1}^{-1}$ for $2\leq i\leq n$ and that $c_1$ and $b$ are freely homotopic to $\alpha_1$ and $\alpha_2$, respectively. Thus, \cite[Theorem~3]{Jo:fg} can be restated as follows:

\begin{Theorem} 
\label{thm:HuJo}
The mapping class group $\Mod_n^1$ is generated by the following Dehn twists: $\{T_{\beta_i}: 1\leq i\leq n\}$, $\{T_{\alpha_i\alpha_{i-1}^{-1}}: 2\leq i\leq n\}$, $T_{\alpha_1}$ and $T_{\alpha_2}$. 
\end{Theorem}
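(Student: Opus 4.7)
The plan is to deduce Theorem~\ref{thm:HuJo} directly from Johnson's Theorem~3 in \cite{Jo:fg}, which asserts that $\Mod_n^1$ is generated by the Dehn twists about the curves $c_1,c_2,\ldots,c_{2n},b$ pictured in \cite[p.~428, Figure~5]{Jo:fg}. Given the convention introduced just before the theorem that $T_\gamma = T_{\gamma'}$ whenever $\gamma$ is freely homotopic to an essential simple closed curve $\gamma'$, the task reduces to verifying the four free homotopies claimed in the discussion preceding the theorem statement: $c_{2i}\sim \beta_i$, $c_{2i-1}\sim \alpha_i\alpha_{i-1}^{-1}$, $c_1\sim \alpha_1$, and $b\sim \alpha_2$.

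First I would handle the easy identifications. Inspecting Johnson's Figure~5 alongside our Figure~\ref{fig:surface}, the curve $c_{2i}$ is the simple closed curve running once around the ``hole'' of the $i$-th handle, so up to isotopy it coincides with $b_i$; since $\beta_i = \gamma_i b_i \gamma_i^{-1}$ and free homotopy ignores the basing path, $c_{2i}$ is freely homotopic to $\beta_i$. Similarly $c_1$ is a curve going once over the first handle in the $a_1$-direction, hence freely homotopic to $\alpha_1 = \gamma_1 a_1 \gamma_1^{-1}$, and $b$ plays the analogous role on the second handle, giving $b\sim \alpha_2$.

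The main point to check is the identification for $2\leq i\leq n$, namely that $c_{2i-1}$ is freely homotopic to $\alpha_i\alpha_{i-1}^{-1}$. The curve $c_{2i-1}$ in Johnson's picture traverses both the $i$-th and the $(i-1)$-st handle, and it is simple; after choosing a basepoint on it and joining to $p_0$ by a suitable arc, a direct reading off of Figure~\ref{fig:surface} shows that it represents, up to conjugation in $\pi$, the element $\alpha_i \alpha_{i-1}^{-1}$ (modulo the standard ambiguity of cyclic word and orientation). Since two essential simple closed curves on an orientable surface are freely homotopic if and only if they are conjugate as unoriented cyclic words in $\pi$, this establishes $T_{c_{2i-1}}=T_{\alpha_i\alpha_{i-1}^{-1}}$.

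The only subtlety, and what I expect to be the main bookkeeping obstacle, is keeping the orientation conventions consistent so that the exponent pattern $\alpha_i \alpha_{i-1}^{-1}$ (rather than $\alpha_i\alpha_{i-1}$ or the reverse) comes out correctly; this amounts to carefully comparing the orientations of the $a_i$ in Figure~\ref{fig:surface} with those implicit in Johnson's figure. Once these four free homotopies are verified, Theorem~\ref{thm:HuJo} follows at once from Johnson's Theorem~3 by replacing each generator $T_{c_j}$ or $T_b$ in Johnson's list with the corresponding Dehn twist from our list.
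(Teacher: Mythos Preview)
Your proposal is correct and follows exactly the approach in the paper: the paper does not give a separate proof of Theorem~\ref{thm:HuJo} but rather presents it as a restatement of \cite[Theorem~3]{Jo:fg}, with the preceding paragraph asserting the same four free homotopies ($c_{2i}\sim\beta_i$, $c_{2i-1}\sim\alpha_i\alpha_{i-1}^{-1}$, $c_1\sim\alpha_1$, $b\sim\alpha_2$) that you propose to verify. Your write-up is in fact more detailed than the paper's own treatment, which simply says ``it is not hard to see'' for these identifications.
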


We will not explicitly refer to Theorem~\ref{thm:HuJo} in this paper, but we will use several results whose proof relies on 
Theorem~\ref{thm:HuJo}.

\subsection{Generators for the Torelli subgroup and subsurfaces $\Sigma_I$} It is a celebrated theorem of Johnson~\cite{Jo:fg} that the Torelli group $\calI=\calI_n^1$ is finitely generated for $n\geq 3$. Johnson's generating set from \cite{Jo:fg} is explicit, but it lacks a key feature of Magnus' generating set for $\IA_n$ -- the fact that most generating pairs commute -- that is essential for our purposes. A generating set for $\calI$ which has the latter property was constructed by Church and Putman~\cite{CP} using an earlier work of Putman~\cite{Pu1}.

\vskip .12cm
Recall the curves $\alpha_i$ and $\beta_i$, $1\leq i\leq n$, defined 
in \S~\ref{sec:prelim}. 
For each $I\subseteq [n]$ choose a subsurface $\Sigma_I\subseteq \Sigma$ satisfying the following properties:
\begin{itemize} 
\item[(i)] The curves $\alpha_i$ and $\beta_i$ lie on $\Sigma_I$ for all $i\in I$.
\item[(ii)] $\Sigma_I$ has genus $|I|$ and $1$ boundary component.
\item[(iii)] The boundary of $\Sigma_I$ is homotopic to $\prod_{i\in I}[\alpha_i,\beta_i]$ (where the product is taken in increasing order). 
\item[(iv)] $\Sigma_I\cap\partial\Sigma$ is an interval which contains $p_0$ and does not depend on $I$.
\end{itemize}
For a longer but more transparent definition of $\Sigma_I$ see \cite[\S~4]{CP} or \cite[\S~7]{EH}. 
For an illustration see Figure~\ref{figure:subsurface}.

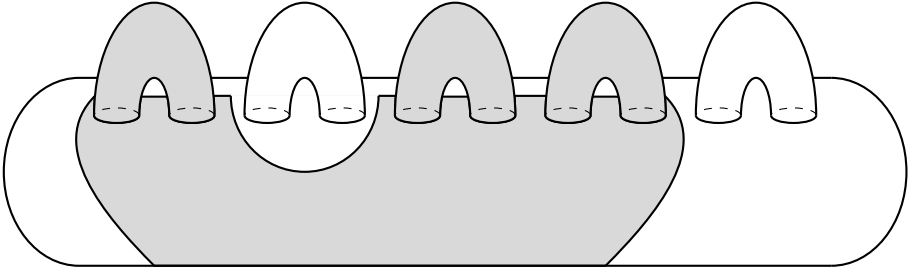
\begin{figure}
\begin{tikzpicture}

  %surface boundary parameters
  \def\xstart{0 }; %start and end of straight line part of surface
  \def\xend{10 };
  \def\xmid{0.5*\xstart + 0.5*\xend };
  \def\yb{2.5 }; %height of surface boundary
 
  %handle size parameters

  \def\y{2 }; %y-coord of handle hole centers
  \def\xr{0.3 }; %radii of handle holes
  \def\yr{0.1 };
  
  \def\hi{0.5 }; %vertical radius of inner part of handle
  \def\ho{1.5 }; %vertical radius of outer part of handle
  
  %handle position parameters
   %first handle
   \def\xa{0.5 }; %center of left hole of first handle
   \def\xb{1.5 }; %center of right hole of first handle
  
   %second handle
   \def\xc{2.5 };
   \def\xd{3.5 }; 
   
   %third handle
   \def\xe{4.5 };
   \def\xf{5.5 };
  
   %fourth handle
   \def\xg{6.5 };
   \def\xh{7.5 };
  
   %last handle
   \def\xi{8.5 };
   \def\xj{9.5 };
   
   %draws sigma134 boundary
   
   %\draw[thick, fill=gray!30] (\xstart,\yb) to[out=270, in=180] (4, 0.5) to[out=0, in=270] (8, \yb);

  %points for sigma134 boundary on handle arcs
  \path[] (\xb + \xr, \y) arc (0:10: 0.5*\xb-0.5*\xa + \xr and \ho) coordinate[] (A); %1st handle outer right
  
  \path[] (\xb - \xr, \y) arc (0:30: 0.5*\xb-0.5*\xa-\xr and \hi) coordinate[] (AR); %1st handle inner right
  \path[] (\xb - \xr, \y) arc (0:150: 0.5*\xb-0.5*\xa-\xr and \hi) coordinate[] (AL) ;%1st handle inner left
  
  \path[] (\xh + \xr, \y) arc (0:10: 0.5*\xh-0.5*\xg + \xr and \ho) coordinate[] (B); %4th handle outer right
  \path[] (\xh - \xr, \y) arc (0:150: 0.5*\xh-0.5*\xg-\xr and \hi) coordinate[] (CL); %4th handle inner left
  \path[] (\xh - \xr, \y) arc (0:30: 0.5*\xh-0.5*\xg-\xr and \hi) coordinate[] (CR); %4th handle inner right
  
  \path[] (\xf + \xr, \y) arc (0:10: 0.5*\xf-0.5*\xe + \xr and \ho) coordinate[] (D); %3rd handle outer right
  
  \path[] (\xf - \xr, \y) arc (0:150: 0.5*\xf-0.5*\xe-\xr and \hi) coordinate[] (EL); %3rd handle inner left
  \path[] (\xf - \xr, \y) arc (0:30: 0.5*\xf-0.5*\xe-\xr and \hi) coordinate[] (ER); %3rd handle inner right
  
  \path[] (3, 1.25) to[out=0, in=270] coordinate[pos=0.872] (WR) (4, \yb);
  \path[] (2, \yb) to[out=270, in=180] coordinate[pos=0.128] (WL) (3, 1.25);

   \draw[thick, fill=gray!30] (WL) -- (A) arc (10:0: 0.5*\xb-0.5*\xa + \xr and \ho) arc (0:-180: \xr and \yr) arc (0:30: 0.5*\xb-0.5*\xa-\xr and \hi) -- (AL) arc (150:180: 0.5*\xb-0.5*\xa-\xr and \hi) arc (0:-180: \xr and \yr) arc (180:170: 0.5*\xb-0.5*\xa + \xr and \ho) to[out=225, in=135] (1,0) -- (7,0) to[out=45, in=-45] (B) arc (10:0: 0.5*\xh-0.5*\xg + \xr and \ho) arc (0:-180: \xr and \yr) arc (0:30: 0.5*\xh-0.5*\xg-\xr and \hi) -- (CL) arc (150:180: 0.5*\xh-0.5*\xg-\xr and \hi) arc (0:-180: \xr and \yr) arc (180:170: 0.5*\xh-0.5*\xg + \xr and \ho) -- (D) arc (10:0: 0.5*\xf-0.5*\xe + \xr and \ho) arc (0:-180: \xr and \yr) arc (0:30: 0.5*\xf-0.5*\xe-\xr and \hi) -- (EL) arc (150:180: 0.5*\xf-0.5*\xe-\xr and \hi) arc  (0:-180: \xr and \yr)  arc (180:170: 0.5*\xf-0.5*\xe + \xr and \ho) -- (WR);

  \draw[thick, fill=white] (WL) to[out=270, in=180] (3, 1.25) to[out=0, in=270] (WR);
  
  %draws surface boundary
  \draw[thick] (\xstart,0) -- (\xend,0);
  \draw[thick] (0,\yb) -- (0.25, \yb) (1.75,\yb) -- (2.25, \yb) (3.75,\yb) -- (4.25,\yb) (5.75,\yb) -- (6.25,\yb) (7.75,\yb) -- (8.25,\yb) (9.75, \yb) -- (10, \yb);
  \draw[thick] (\xstart,\yb) arc (90:270:1 and 0.5*\yb);
  \draw[thick] (\xend,\yb) arc (90:-90:1 and 0.5*\yb);

  %First Handle

  \draw[thick, fill=gray!30] (\xb+\xr, \y) arc (0:-180: \xr and \yr) arc (0:180: 0.5*\xb-0.5*\xa-\xr and \hi) arc (0:-180: \xr and \yr) arc (180:0: 0.5*\xb-0.5*\xa + \xr and \ho);

  %draws left hole of handle
  \draw[dashed] (\xa+\xr, \y) arc (0:180: \xr and \yr);
  
  %draws right hole of handle
  \draw[dashed] (\xb+\xr, \y) arc (0:180: \xr and \yr);

  %Second Handle
  
  %draws left hole of handle
  \draw[dashed] (\xc+\xr, \y) arc (0:180: \xr and \yr);
  \draw[thick] (\xc+\xr, \y) arc (0:-180: \xr and \yr);
  
  %draws right hole of handle
  \draw[dashed] (\xd+\xr, \y) arc (0:180: \xr and \yr);
  \draw[thick] (\xd+\xr, \y) arc (0:-180: \xr and \yr);
  
  %draws handle part of handle
  \draw[thick] (\xd - \xr, \y) arc (0:180: 0.5*\xd-0.5*\xc-\xr and \hi);
  \draw[thick] (\xd + \xr, \y) arc (0:180: 0.5*\xd-0.5*\xc + \xr and \ho);

 %Third Handle
  
  \draw[thick, fill=gray!30] (\xf+\xr, \y) arc (0:-180: \xr and \yr) arc (0:180: 0.5*\xf-0.5*\xe-\xr and \hi) arc (0:-180: \xr and \yr) arc (180:0: 0.5*\xf-0.5*\xe + \xr and \ho);

  %draws left hole of handle
  \draw[dashed] (\xe+\xr, \y) arc (0:180: \xr and \yr);
  
  %draws right hole of handle
  \draw[dashed] (\xf+\xr, \y) arc (0:180: \xr and \yr);
 
  %Fourth Handle

\draw[thick, fill=gray!30] (\xh+\xr, \y) arc (0:-180: \xr and \yr) arc (0:180: 0.5*\xh-0.5*\xg-\xr and \hi) arc (0:-180: \xr and \yr) arc (180:0: 0.5*\xh-0.5*\xg + \xr and \ho);

  %draws left hole of handle
  \draw[dashed] (\xg+\xr, \y) arc (0:180: \xr and \yr);
  
  %draws right hole of handle
  \draw[dashed] (\xh+\xr, \y) arc (0:180: \xr and \yr);
  
  %Last Handle
  
  %draws left hole of handle
  \draw[dashed] (\xi+\xr, \y) arc (0:180: \xr and \yr);
  \draw[thick] (\xi+\xr, \y) arc (0:-180: \xr and \yr);
  
  %draws right hole of handle
  \draw[dashed] (\xj+\xr, \y) arc (0:180: \xr and \yr);
  \draw[thick] (\xj+\xr, \y) arc (0:-180: \xr and \yr);
  
  %draws handle part of handle
  \draw[thick] (\xj - \xr, \y) arc (0:180: 0.5*\xj-0.5*\xi-\xr and \hi);
  \draw[thick] (\xj + \xr, \y) arc (0:180: 0.5*\xj-0.5*\xi + \xr and \ho);
  
\end{tikzpicture}
\caption{The surface $\Sigma_{\{1,3,4\}}$}
\label{figure:subsurface}
\end{figure}

The subsurfaces $\Sigma_I$ are uniquely defined up to isotopy and satisfy the following properties (1)-(4). Properties (1)-(3)
follow immediately from the definitions and (4) can be proved by a standard application of the 
{\it change of coordinates principle}~\cite[1.1.3]{FM}.

\begin{Observation}
\label{obs:subsurfaces}
The following hold:
\begin{itemize}
\item[(1)] $\Sigma_{[n]}$ is isotopic to $\Sigma$.
\item[(2)] If $I\subseteq J$, there exist subsurfaces $\Sigma'_I$ isotopic to $\Sigma_I$
and $\Sigma'_J$ isotopic to $\Sigma_J$ such that $\Sigma'_I\subseteq \Sigma'_J$.
\item[(3)] If $I$ and $J$ are disjoint and uncrossed (as defined below), 
there exist subsurfaces $\Sigma'_I$ isotopic to $\Sigma_I$
and $\Sigma'_J$ isotopic to $\Sigma_J$ such that $\Sigma'_I$ and $\Sigma'_J$ are disjoint.
\item[(4)] If $|I|=|J|$, there exists an orientation-preserving homeomorphism $g$ of $\Sigma$
acting trivially on $\partial\Sigma$ such that $g(\Sigma_I)=\Sigma_J$.
\end{itemize}
\end{Observation}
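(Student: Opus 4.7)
The plan is to first establish that the subsurface $\Sigma_I$ is determined up to ambient isotopy in $\Sigma$ by properties (i)--(iv), and then verify each of the four assertions using this uniqueness. For uniqueness, I would argue that $\partial \Sigma_I$ is a separating simple closed curve whose free homotopy class is dictated by (iii), namely that of $\prod_{i \in I}[\alpha_i,\beta_i]$; since separating simple closed curves on an orientable surface are determined up to isotopy by their free homotopy class, and (iv) pins down which side of this curve $\Sigma_I$ must lie on, the classification of compact orientable surfaces (together with the prescribed genus from (ii) and the incidence of the $\alpha_i,\beta_i$ with $\Sigma_I$ from (i)) forces $\Sigma_I$ to be unique up to isotopy.

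For (1), when $I=[n]$ the boundary relation $\prod_{i=1}^n[\alpha_i,\beta_i]\simeq (\partial\Sigma)_{p_0}$ recorded after Figure~2 shows that $\Sigma_{[n]}$ and $\Sigma$ satisfy the same defining properties, so the isotopy follows from uniqueness. For (2), given $I\subseteq J$, I would fix a representative of $\Sigma_J$ and then apply the $\Sigma_\bullet$ construction inside $\Sigma_J$, regarded as a genus-$|J|$ surface with one boundary component whose natural curves are the $\alpha_i,\beta_i$ with $i\in J$; this yields a subsurface $\Sigma_I'\subseteq\Sigma_J$ satisfying (i)--(iv) for $I$, and by uniqueness $\Sigma_I'$ is isotopic to $\Sigma_I$ in $\Sigma$.

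For (3), one must interpret ``uncrossed'': in the present context this should mean that the disjoint sets $I,J\subseteq[n]$ can be separated by an arc along the handle decomposition of Figure~2, i.e.\ there is no pair in one set interleaved with a pair in the other. Under this hypothesis, the products $\prod_{i\in I}[\alpha_i,\beta_i]$ and $\prod_{j\in J}[\alpha_j,\beta_j]$ admit simple representatives bounding disjoint subsurfaces, obtained by routing each one through disjoint portions of the disk part of $\Sigma$ that separately enclose the relevant handles; uniqueness again yields the claimed isotopies. For (4), since $|I|=|J|$, both $\Sigma_I$ and $\Sigma_J$ are compact orientable genus-$|I|$ subsurfaces of $\Sigma$ with one boundary component sharing the same arc on $\partial\Sigma$ by (iv), and their complements are homeomorphic (of the same genus with matching boundary pattern) by the classification of surfaces; the change-of-coordinates principle then supplies an orientation-preserving self-homeomorphism of $\Sigma$ fixing $\partial\Sigma$ pointwise and carrying $\Sigma_I$ onto $\Sigma_J$.

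The main obstacle I expect is part (3): making the ``uncrossed'' condition precise and verifying that the two boundary curves can simultaneously be realized as disjoint simple closed curves on $\Sigma$ in a way that is compatible with the arc (iv). The remaining parts (1), (2), (4) reduce almost mechanically to the uniqueness statement combined with standard facts about subsurfaces of surfaces.
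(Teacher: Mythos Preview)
The paper does not actually prove Observation~\ref{obs:subsurfaces}; it simply asserts that ``it is not hard to show that the subsurfaces $\Sigma_I$ are uniquely defined up to isotopy and satisfy the following properties,'' and leaves the verification to the reader. Your proposal supplies exactly the kind of argument the authors have in mind: establish uniqueness of $\Sigma_I$ up to isotopy from properties (i)--(iv), and then read off (1)--(4) from uniqueness together with the change-of-coordinates principle. This is the natural route, and your handling of (1), (2), and (4) is correct; your description of ``uncrossed'' matches the paper's definition (given immediately after the Observation), and your sketch for (3) is the right picture.

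One technical point worth tightening: because of condition (iv), the boundary $\partial\Sigma_I$ is not a simple closed curve lying in the interior of $\Sigma$; rather, it shares a fixed arc with $\partial\Sigma$, and the remainder is a properly embedded arc in $\Sigma$ with prescribed endpoints. So your uniqueness argument should invoke the fact that properly embedded arcs in a surface are isotopic rel endpoints if and only if they are homotopic rel endpoints (equivalently, work in the surface obtained by collapsing the fixed interval on $\partial\Sigma$ to a point, where $\partial\Sigma_I$ becomes an honest simple closed curve). With that adjustment, the uniqueness claim goes through, and the rest of your outline stands.
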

\begin{Definition}\rm Let $I$ and $J$ be disjoint subsets of $[n]$. We will say that $I$ and $J$ are {\it crossed} if there exist $i_1,i_2\in I$ and $j_1,j_2\in J$
such that $i_1<j_1<i_2<j_2$ or $j_1<i_1<j_2<i_2$. Otherwise $I$ and $J$ will be called {\it uncrossed}. Clearly, if $I$ consists of consecutive integers, then $I$ is uncrossed with any subset $J$ disjoint from it.
\end{Definition}
\begin{Remark}\rm The technical condition (iv) in the definition of $\Sigma_I$ (which was not imposed in \cite{CP} or \cite{EH})
is needed to ensure that property (4) in Observation~\ref{obs:subsurfaces} holds. Note that an easier way to achieve (4) would be to require that $\Sigma_I\cap\partial\Sigma=\emptyset$. However, the latter would prevent us from considering the fundamental groups
of $\Sigma_I$ as subgroups of $\pi_1(\Sigma,p_0)$, something that is essential for our purposes.
\end{Remark}

For each $I\subseteq [n]$ define $\Mod_I$ to be the subgroup of $\Mod(\Sigma)$ consisting of mapping classes which have a representative supported on $\Sigma_I$, and let $\calI_I=\Mod_I\cap \calI$. Parts (1)-(3) 
of Observation~\ref{obs:subsurfaces} have obvious group-theoretic consequences:

\begin{Observation}The following hold:
\label{obs:uncrossed}
\begin{itemize}
\item[(i)] $\Mod_{[n]}=\Mod(\Sigma)$.
\item[(ii)] If $I\subseteq J$, then $\Mod_I\subseteq \Mod_J$.
\item[(iii)] If $I$ and $J$ are disjoint and uncrossed, then $\Mod_I$ and $\Mod_J$ commute.
\end{itemize}
\end{Observation}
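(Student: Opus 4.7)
The plan is to deduce each part directly from the corresponding part of Observation~\ref{obs:subsurfaces}. The only subtlety is that the definition of $\Mod_I$ uses a specific choice of representative subsurface $\Sigma_I$, whereas the conclusions of Observation~\ref{obs:subsurfaces} allow us to replace $\Sigma_I$ by an isotopic subsurface. So my first step would be to record the following standard fact: if $\Sigma'$ and $\Sigma''$ are isotopic subsurfaces of $\Sigma$, then the subgroups of $\Mod(\Sigma)$ consisting of mapping classes with a representative supported on $\Sigma'$ (resp.\ on $\Sigma''$) coincide. This is because an ambient isotopy $\{h_t\}$ taking $\Sigma'$ to $\Sigma''$ conjugates a homeomorphism $f$ supported on $\Sigma'$ to a homeomorphism $h_1 f h_1^{-1}$ supported on $\Sigma''$ and differing from $f$ by an isotopy rel $\partial \Sigma$ (here condition (iv) in the definition of $\Sigma_I$ is what lets us keep $\partial \Sigma$ pointwise fixed throughout the isotopy).

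Granting this, part (i) is immediate from Observation~\ref{obs:subsurfaces}(1): every element of $\Mod(\Sigma)$ trivially has a representative supported on $\Sigma$, hence also one supported on any subsurface isotopic to $\Sigma_{[n]}$.

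For part (ii), apply Observation~\ref{obs:subsurfaces}(2) to produce $\Sigma'_I \subseteq \Sigma'_J$ with $\Sigma'_I$ isotopic to $\Sigma_I$ and $\Sigma'_J$ isotopic to $\Sigma_J$. By the observation above, $\Mod_I$ consists of mapping classes with a representative supported on $\Sigma'_I$, and any such representative is automatically supported on $\Sigma'_J$, so lies in $\Mod_J$.

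For part (iii), use Observation~\ref{obs:subsurfaces}(3) to produce disjoint subsurfaces $\Sigma'_I$ and $\Sigma'_J$ isotopic to $\Sigma_I$ and $\Sigma_J$ respectively. Pick $f \in \Mod_I$ and $g \in \Mod_J$ with representatives $\tilde f$ supported on $\Sigma'_I$ and $\tilde g$ supported on $\Sigma'_J$. Since $\Sigma'_I \cap \Sigma'_J = \emptyset$, the supports of $\tilde f$ and $\tilde g$ are disjoint, so $\tilde f \tilde g = \tilde g \tilde f$ as homeomorphisms, and hence $[f,g] = 1$ in $\Mod(\Sigma)$. I expect no real obstacle here; the only thing one has to be slightly careful about is that boundary-fixing isotopies behave well with respect to the union $\Sigma'_I \cup \Sigma'_J$, which is ensured by condition (iv) in the definition of $\Sigma_I$.
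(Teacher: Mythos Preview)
Your proposal is correct and follows exactly the approach the paper indicates: the paper simply states that parts (i)--(iii) are ``obvious group-theoretic consequences'' of parts (1)--(3) of Observation~\ref{obs:subsurfaces}, and you have supplied precisely those deductions. One small remark: your invocation of condition (iv) is slightly misplaced---the paper's own remark after Observation~\ref{obs:subsurfaces} says (iv) is imposed to ensure property (4), not (1)--(3); the isotopy-invariance of $\Mod_I$ that you need follows from the standard isotopy extension theorem without any special appeal to (iv).
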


A deep result of Church and Putman~\cite[Proposition~4.5]{CP} asserts
that if $n\geq 3$, then $\calI=\la \calI_I: |I|=3\ra$. This fact played a key role in the proof of finite generation of $\calK_n^1$ in \cite{EH}, but in \cite{EH} there was no need to work with a specific finite generating set inside $\bigcup\limits_{|I|=3}\calI_I$. In this paper we will need to choose such a generating set $S$ as follows: we will start with some finite generating set $S_{\{1,2,3\}}$ of $\calI_{\{1,2,3\}}$ and then add to it the images of $S_{\{1,2,3\}}$ under carefully chosen isomorphisms between $\calI_{\{1,2,3\}}$ and $\calI_I$ for every $3$-element subset $I$ of $[n]$. The details of this construction will be given later in this section.

\subsection{Abelian quotients of the Torelli subgroup}
Let $V=H_1(\Sigma)$. Then $V$ is a free abelian group of rank $2n$, and it is well known that the algebraic intersection form on $V$ is symplectic. Clearly $\calM/\calI$ acts on $V$ preserving this form, so there is a canonical group homomorphism
$\calM/\calI\to \Sp(V)$ where $\Sp(V)$ is the group of automorphisms of $V$ preserving this form. It is also well known that this homomorphism is an isomorphism, which enables us to identify $\calM/\calI$ with $\Sp(V)$. 

From the definition of the Johnson filtration it is easy to see that the quotient $\calI/\calK=\calI_{n}^1(1)/\calI_{n}^1(2)$ is abelian and torsion-free. A complete description of the abelianization of $\calI$ for $n\geq 3$ was obtained in a series of 
Johnson's papers~\cite{Jo:ab1,Jo:ab2,Jo:ab3}. Below we collect some specific results about abelian quotients of $\calI$ that will be needed in this paper:

\begin{Theorem} 
\label{thm:Johnson}
Assume that $n\geq 3$. The following hold:
\begin{itemize}
\item[(a)] $\calI/\calK$ is the largest torsion-free abelian quotient of $\calI$. 
\item[(b)] There is a canonical isomorphism of $\Sp(V)$-modules $\calI/\calK\cong \wedge^3 V$. In particular, $\calI/\calK\cong \dbZ^{{2n\choose 3}}$ as a group.
\item[(c)] $\calK/[\calI,\calI]$ (which by (a) is the torsion part of $\calI/[\calI,\calI]$) has exponent $2$ and rank
${2n\choose 2}+{2n\choose 1}+{2n\choose 0}$.
\end{itemize}
\end{Theorem}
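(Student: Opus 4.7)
The plan is that all three parts are essentially due to Johnson; I would assemble them from his series of papers on the abelianization of $\calI$ together with the definition of the Johnson homomorphism, and then derive (a) as a formal consequence of (b) and (c).

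For part (b), I would first construct the first Johnson homomorphism explicitly. Writing $\pi = \pi_1(\Sigma,p_0)$ and $V = H_1(\Sigma) \cong \pi/\gamma_2\pi$, for each $f \in \calI$ and $\gamma \in \pi$ the element $f(\gamma)\gamma^{-1}$ lies in $\gamma_2\pi$; reducing modulo $\gamma_3\pi$ and using the canonical Lie-theoretic isomorphism $\gamma_2\pi/\gamma_3\pi \cong \wedge^2 V$, this defines a homomorphism $\tau \colon \calI \to \Hom(V,\wedge^2 V)$. Using the symplectic intersection form to identify $V^*$ with $V$, Johnson showed in \cite{Jo:ab1} that the image of $\tau$ lies in the $\Sp(V)$-submodule $\wedge^3 V$ of $V \otimes \wedge^2 V$, that $\tau$ is surjective, and that $\ker\tau = \calK$; the last identity is essentially the definition of $\calK$ as the second term of the Johnson filtration, combined with the fact that $\tau$ records the induced action on $\pi/\gamma_3\pi$. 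The map is $\Sp(V)$-equivariant by construction, and a rank count of $\wedge^3 V$ gives $\binom{2n}{3}$.

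For part (c), the assertion is contained in Johnson's computation of $H_1(\calI;\dbZ)$ in \cite{Jo:ab3}. There he combines $\tau$ with the family of Birman--Craggs homomorphisms $\calI \to \dbF_2$ (derived from Rokhlin-type invariants of the homology $3$-spheres obtained from Heegaard splittings associated to elements of $\calI$) and shows that these collectively induce an isomorphism
\[
\calI^{\mathrm{ab}} \cong \wedge^3 V \oplus T,
\]
where $T$ is an elementary abelian $2$-group of rank $\binom{2n}{2}+\binom{2n}{1}+\binom{2n}{0}$. Because each Birman--Craggs homomorphism factors through $\calI/\calK$ composed with something $2$-torsion, the summand $T$ is identified precisely with the image of $\calK$ in $\calI^{\mathrm{ab}}$, i.e.\ with $\calK/[\calI,\calI]$.

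Part (a) is then a formal consequence. The quotient $\calI/\calK \cong \wedge^3 V$ is free abelian, so it is a torsion-free abelian quotient of $\calI$. Conversely, if $\calI \twoheadrightarrow Q$ is any surjection onto a torsion-free abelian group, it factors through $\calI^{\mathrm{ab}}$, and by (c) the torsion subgroup $\calK/[\calI,\calI]$ must map to $0$ in $Q$; hence the map factors through $\calI/\calK$, which is therefore universal. The genuine mathematical content of the theorem lies in (b) and especially in the torsion part of (c): extracting the $2$-torsion summand $T$ is the only step that requires substantially more than the Lie-algebra computation behind $\tau$, and would be the main obstacle if one attempted a from-scratch proof, since the Birman--Craggs maps are detected by smooth $4$-manifold invariants rather than by direct commutator calculations inside $\calI$.
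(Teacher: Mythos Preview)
The paper does not prove this theorem at all: it is stated as a summary of known results and attributed to Johnson's series of papers \cite{Jo:ab1,Jo:ab2,Jo:ab3}, with no further argument. Your proposal does essentially the same thing --- attribute everything to Johnson and explain informally how the pieces fit together --- so in that sense your approach matches the paper's.

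That said, one sentence in your sketch of (c) is not right and would mislead a reader. You write that ``each Birman--Craggs homomorphism factors through $\calI/\calK$ composed with something $2$-torsion,'' but this is false: the Birman--Craggs homomorphisms do \emph{not} vanish on $\calK$ (indeed, restricted to $\calK$ they detect the torsion summand $T$). The correct logic for identifying $T$ with $\calK/[\calI,\calI]$ is the one you already give in part (a): since $\calI/\calK$ is free abelian by (b), the image of $\calK$ in $\calI^{\mathrm{ab}}$ must contain all torsion, and Johnson's rank computation shows it is exactly the torsion subgroup. You should drop the erroneous factoring claim and rely on this argument instead. Also, the identification $\ker\tau = \calK$ in the sense of ``the subgroup generated by separating twists'' is the main theorem of \cite{Jo:ab2}, not \cite{Jo:ab1}; your parenthetical about the Johnson filtration definition is the cleaner route here.
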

\begin{Remark}\rm We briefly comment on how (a), (b) and (c) follow from the results of \cite{Jo:ab1,Jo:ab2,Jo:ab3}. In \cite{Jo:ab1},
Johnnson constructed
\begin{itemize}
\item an epimorphism $\tau: \calI\to \wedge^3 V$ such that $\calK\subseteq\Ker\tau$ and the induced map
$\calI/\calK\to \wedge^3 V$ is a homomorphism of $\Sp(V)$-modules and
\item a group epimorphism $\sigma:\calI\to B_3$ where $B_3$ is an elementary abelian $2$-group of rank $\sum\limits_{i=0}^3 {2n\choose i}$ (there is also a natural $\Sp(V)$-module structure on $B_3$, but it is not essential for our purposes). 
\end{itemize}
In \cite{Jo:ab2} it was proved that $\calK=\Ker\tau$ which yields (b). 
One of the main results of \cite{Jo:ab3} is that $[\calI,\calI]=\calK\cap \Ker\sigma$. This implies that
$\calI/[\calI,\calI]$ embeds into $\calI/\calK\oplus B_3$ which yields (a). The other main result of \cite{Jo:ab3} is that $B_3$ is the largest exponent $2$ quotient of $\calI$. Combined with (a) and (b) this implies (c).
\end{Remark}

We proceed with the description of $\calI/\calK$. Recall the curves $a_i$ and $b_i$ on $\Sigma$
introduced in \S~\ref{sec:prelim}. By slight abuse of notation below we will use the notations $a_i$ and $b_i$
for the corresponding homology classes in $H_1(\Sigma)$. It is clear that $a_1,b_1,\ldots,a_n,b_n$ is a symplectic basis of $H_1(\Sigma)$, that is, $a_i\cdot a_j=b_i\cdot b_j=0$ for all $i,j$ and $a_i\cdot b_j=\delta_{ij}$.
Also note that $a_i$ and $b_i$ span $H_1(\Sigma_{\{i\}})$. 
\medskip

Let $V=H_1(\Sigma)$, for $i\in [n]$ let $V_i=H_1(\Sigma_{\{i\}})=Span\{a_i,b_i\}$, and for $I\subseteq [n]$ let $V_I=\oplus_{i\in I}V_i=Span\{a_i,b_i: i\in I\}\subseteq V$. Thus, $\wedge^3 V=\sum_{|I|=3}\wedge^3 V_I$. Recall that we have an isomorphism of $Sp(V)$-modules
$\calI/\calK\to \wedge^3 V$. For every $I\subseteq [n]$ let $\phi_I: \calI_I\to \wedge^3 V$ be the map
obtained by precomposing the isomorphism $\calI/\calK\to \wedge^3 V$ with the natural projection 
$\calI\to\calI/\calK$ and the inclusion $\calI_I\to\calI$.

The following result is an immediate consequence of Johnson's paper~\cite{Jo:ab3}:
\begin{Lemma}
\label{lem:compatibility} For every $I\subseteq [n]$ we have $\phi_I(\calI_I)=V_I$.
\end{Lemma}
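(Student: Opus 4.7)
I read the right-hand side of the lemma as $\wedge^3 V_I$, which is an evident typo in the statement. The plan is to deduce the equality from the naturality of the Johnson homomorphism under inclusion of bounded subsurfaces, together with Johnson's surjectivity theorem from \cite{Jo:ab3}.

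First I would unpack $\calI_I$. By property~(ii) of Observation~\ref{obs:subsurfaces}, $\Sigma_I$ has exactly one boundary component, so extension by the identity gives a homomorphism $e_I\colon \Mod(\Sigma_I)\to \Mod(\Sigma)$ with image $\Mod_I$. Writing $V = V_I\oplus V_{I^c}$, every class in $\Mod_I$ fixes $V_{I^c}$ pointwise, so it lies in $\calI$ precisely when it acts trivially on $V_I$; hence $e_I$ restricts to a surjection $\calI(\Sigma_I)\twoheadrightarrow \calI_I$.

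The heart of the argument is to identify $\phi_I\circ e_I$ with the composition of the Johnson homomorphism $\tau_{\Sigma_I}\colon \calI(\Sigma_I)\to \wedge^3 V_I$ of $\Sigma_I$ itself, followed by the inclusion $\wedge^3 V_I\hookrightarrow \wedge^3 V$ induced by $V_I\hookrightarrow V$. This is the standard naturality of $\tau$ under the embedding $\Sigma_I\hookrightarrow \Sigma$; it is built into Johnson's definition of $\tau$ via the action on $\pi/\gamma_3\pi$, using the subgroup inclusion $\pi_1(\Sigma_I,p_0)\hookrightarrow \pi_1(\Sigma,p_0)$ guaranteed by condition~(iv) in the definition of $\Sigma_I$. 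Granting naturality, the containment $\phi_I(\calI_I)\subseteq \wedge^3 V_I$ is automatic, and the reverse inclusion reduces to surjectivity of $\tau_{\Sigma_I}$. For $|I|\ge 3$ this surjectivity is the main theorem of \cite{Jo:ab3}; for $|I|\le 1$ one has $\wedge^3 V_I = 0$ and nothing to prove; the case $|I|=2$ is either covered by Johnson's same result in the bounded genus-two case $\Sigma_{2,1}$ or else can be verified by inspecting a finite generating set of $\calI(\Sigma_{2,1})$.

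The main obstacle, really the only non-formal step, is the naturality statement identifying $\phi_I\circ e_I$ with the inclusion of Johnson's image for $\Sigma_I$. Its verification amounts to showing that the isomorphism $\calI/\calK\cong \wedge^3 V$ fixed in the excerpt agrees, when restricted to the image of $e_I$, with the Johnson isomorphism for $\Sigma_I$ under the evident inclusion; this is straightforward provided both isomorphisms are described via Johnson's $\pi/\gamma_3\pi$ formula with the shared base point $p_0\in\partial\Sigma_I\subseteq\partial\Sigma$, which is exactly what conditions~(i) and~(iv) on the subsurfaces $\Sigma_I$ were set up to ensure.
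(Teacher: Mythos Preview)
Your reading of the typo is correct: the intended statement is $\phi_I(\calI_I)=\wedge^3 V_I$, and your approach via naturality of the Johnson homomorphism under subsurface inclusion together with Johnson's surjectivity is exactly what the paper has in mind when it declares the lemma ``an immediate consequence of Johnson's paper~\cite{Jo:ab3}.'' The paper gives no further argument, so your unpacking is the standard one.

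One small slip: your fallback for $|I|=2$, ``inspecting a finite generating set of $\calI(\Sigma_{2,1})$,'' does not work as stated, since $\calI(\Sigma_{2,1})$ is not finitely generated (this follows from McCullough--Miller~\cite{MM} for the closed genus-two Torelli group via the surjection $\calI_2^1\to\calI_2^0$). Your primary alternative is the correct one: surjectivity of $\tau_{\Sigma_{2,1}}$ onto $\wedge^3 V_I$ follows from Johnson's explicit computation of $\tau$ on bounding pair maps, which needs no finiteness hypothesis. In any case the lemma is only invoked in the paper for $|I|=3$ (in Lemma~\ref{lem:reggenset}), so the point is minor.
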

\vskip .1cm

We can now construct a generating set for $\calI$ with certain nice properties. First order the chosen basis of $V$ as follows:
$a_1<b_1<a_2<\ldots<b_n$, and let $\calB$ be the set of all wedges $x\wedge y\wedge z$ with $x,y,z\in \{a_i,b_i\}$ and $x<y<z$. Clearly 
$\calB$ is  a basis for $\wedge^3 V$. 

\begin{Lemma}
\label{lem:reggenset} Suppose that $n\geq 3$. Then for every $I\subseteq [n]$ with $|I|=3$ there exists a generating set $S_I$ for
$\calI_I$ with $|S_I|=42$ whose image in $\calI/\calK$ is equal to $(\calB\cap V_I)\sqcup\{0\}$. Moreover, if we let $S=\cup_{|I|=3} S_I$, then $S$
is generating set for $\calI$ which satisfies the conclusion of Lemma~\ref{lem:Nielsen} for $G=\calI$, $K=\calK$ and 
$E=\calB$.
\end{Lemma}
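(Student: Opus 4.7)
The plan is to construct $S_{\{1,2,3\}}$ first with the required properties, then transport it to every other $3$-element subset $I\subseteq[n]$ via a suitable mapping class of $\Sigma$, and finally to apply the Church--Putman theorem to conclude that $S=\bigcup_{|I|=3} S_I$ generates $\calI$.

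For the first step, I would invoke Stylianakis's work~\cite{Sty} to obtain a $42$-element generating set of $\calI_{\{1,2,3\}}\cong\calI_3^1$; the number $42$ is optimal since $\calI_3^1/[\calI_3^1,\calI_3^1]\cong\dbZ^{20}\oplus(\dbZ/2)^{22}$ requires $20+22=42$ generators. By Lemma~\ref{lem:compatibility} (read with the target $\wedge^3 V_I$), $\calI_{\{1,2,3\}}/\calK_{\{1,2,3\}}$ is free abelian with basis $\calB\cap\wedge^3 V_{\{1,2,3\}}$, so Lemma~\ref{lem:Nielsen} applies. A sequence of right Nielsen transformations converts the given generating set into $S_{\{1,2,3\}}$ of the same cardinality $42$, with the property that exactly $20$ of its elements project bijectively onto $\calB\cap \wedge^3 V_{\{1,2,3\}}$ and the remaining $22$ lie in $\calK_{\{1,2,3\}}$; that is, $\theta(S_{\{1,2,3\}})=(\calB\cap\wedge^3 V_{\{1,2,3\}})\sqcup\{0\}$.

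Next, for each $I\subseteq[n]$ with $|I|=3$, let $\sigma_I\colon\{1,2,3\}\to I$ be the order-preserving bijection. I would choose a mapping class $g_I\in\Mod(\Sigma)$ that rigidly permutes handles according to $\sigma_I$ (sending handle $i$ to handle $\sigma_I(i)$ for $i\in\{1,2,3\}$ and permuting the remaining handles appropriately) while fixing $\partial\Sigma$ pointwise. By Observation~\ref{obs:subsurfaces}(4), $g_I$ can be arranged so that $g_I(\Sigma_{\{1,2,3\}})=\Sigma_I$ and the induced action on $H_1(\Sigma)$ satisfies $a_i\mapsto a_{\sigma_I(i)}$, $b_i\mapsto b_{\sigma_I(i)}$ for $i\in\{1,2,3\}$. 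Set $S_I=g_I\,S_{\{1,2,3\}}\,g_I^{-1}$. Then $S_I$ generates $\calI_I$ and $|S_I|=42$. The induced map $\wedge^3 g_I$ sends $x\wedge y\wedge z$ (with $x<y<z$) to $g_I(x)\wedge g_I(y)\wedge g_I(z)$, and since $\sigma_I$ is order-preserving and the total order $a_1<b_1<a_2<\cdots<b_n$ is compatible with the index order, $\wedge^3 g_I$ maps $\calB\cap\wedge^3 V_{\{1,2,3\}}$ bijectively onto $\calB\cap\wedge^3 V_I$. Hence $\theta(S_I)=(\calB\cap\wedge^3 V_I)\sqcup\{0\}$.

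Finally, $S=\bigcup_{|I|=3} S_I$ generates $\calI$ by \cite[Proposition~4.5]{CP}, which shows $\calI=\langle \calI_I:|I|=3\rangle$ for $n\geq 3$. Since every basis wedge in $\calB$ involves at most three distinct handle indices, $\calB=\bigcup_{|I|=3}(\calB\cap\wedge^3 V_I)$, so $\theta(S)=\calB\sqcup\{0\}$, verifying the conclusion of Lemma~\ref{lem:Nielsen} with $E=\calB$. The main obstacle is the first step---extracting from \cite{Sty} the precise fact that $\calI_3^1$ admits a generating set of size exactly $42$, together with checking that Lemma~\ref{lem:Nielsen} then yields the specific prescribed image in $\calI/\calK$; constructing $g_I$ with the required homology action is standard surface topology, and the remaining verifications are routine.
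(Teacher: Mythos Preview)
Your overall strategy is sound and in fact anticipates what the paper does \emph{later}, in the proof of Proposition~\ref{prop:Aestimate}, where it fixes $S_{[3]}$ once and then defines $S_I=f_I S_{[3]} f_I^{-1}$ via the elements $f_I$ of Corollary~\ref{cor:iso}. However, the paper's proof of Lemma~\ref{lem:reggenset} itself is more direct and does not transport: for each $I$ separately it takes Johnson's $42$-element generating set of $\calI_I\cong\calI_3^1$ and applies Nielsen moves (exactly as in Lemma~\ref{lem:Nielsen}) to force the image in $\wedge^3 V$ to be $(\calB\cap V_I)\sqcup\{0\}$. This avoids having to construct $g_I$ with a prescribed homology action at this stage. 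Your route buys the compatibility between the $S_I$'s up front; the paper's route is cleaner for the bare statement and defers the compatibility to where it is actually needed.

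Two points to correct. First, the $42$-element generating set is Johnson's, from \cite{Jo:fg}, not Stylianakis's; \cite{Sty} is cited in the paper only for the algorithmic rewriting used to estimate constants. Second, Observation~\ref{obs:subsurfaces}(4) alone does not give you a $g_I$ with the specific action $a_i\mapsto a_{\sigma_I(i)}$, $b_i\mapsto b_{\sigma_I(i)}$ on homology; you need the stronger input of Lemma~\ref{lem:iso}/Corollary~\ref{cor:iso} (existence of $f_I$ with prescribed action on $\pi_1$), together with the fact that the $\Mod(\Sigma)$-action on $\calI/\calK\cong\wedge^3 V$ factors through $\Sp(V)$ acting in the standard way. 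These are available in the paper but appear after Lemma~\ref{lem:reggenset}, so if you keep your approach you should reorder or at least flag the forward reference.
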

\begin{proof} 

By~\cite{Jo:fg}, the group $\calI_I\cong \calI_3^1$ has a generating set $S_I^0$ with $42$ elements. 
By Lemma~\ref{lem:compatibility}, the image of $\calI_I$ in $V$ is equal to $V_{I}$, which is a free abelian group of rank $20$ with basis $\calB\cap V_I$. As in the proof of Lemma~\ref{lem:Nielsen}, applying suitable Nielsen moves to $S_I^0$ (replacing $x$ by $xy^{\pm 1}$ where $x$ and $y$ are distinct generators), we obtain another generating set $S_I$ of $\calI$ with $42$ elements whose image in $V$ is equal to $(\calB\cap V_I)\sqcup\{0\}$.

The set $S=\bigcup_{|I|=3}S_I$ generates $\calI$ by \cite[Proposition~4.5]{CP}. Also the image of $S$ in 
$\wedge^3 V$ is equal to
$\bigcup_{|I|=3}(\calB\cap V_I)\sqcup\{0\}=\calB\sqcup\{0\}$, so $S$ indeed satisfies the conclusion of Lemma~\ref{lem:Nielsen}
(for the desired $G,K$ and $E$). 
\end{proof}

Our effective generation procedure described in Section~2 can be applied to any generating set $S$
constructed in Lemma~\ref{lem:reggenset}. However, some additional compatibility assumptions on $\{S_I\}$ will be needed in order to explicitly estimate the constants $A$ and $B$ from Theorem~\ref{prop:connectedradius}. We postpone this discussion until the end of this section. 

\subsection{Some elements of $\Mod(\Sigma)$} In this subsection we will prove the existence 
of certain elements of $\Mod(\Sigma)$ with a prescribed induced action on $\pi_1(\Sigma,p_0)$. These results will play a key role in estimating the constant $A$ later this section.

\begin{Lemma}
\label{lem:iso} Let $I$ be a non-empty subset of $[n]$, and let $k=|I|$. Suppose we are given closed curves
$\{\delta_j,\delta'_j\}_{j=1}^k$ which are based at $p_0$, lie on $\Sigma_I$, form a basis
of $\pi_1(\Sigma_{I},{p_0})$ and such that
$\prod_{j=1}^k [\delta_i,\delta'_i]=(\partial \Sigma_{I})_{p_0}$ in $\pi_1(\Sigma,p_0)$.
Then there exists an element $f\in \Mod(\Sigma)$ such that
\begin{itemize}
\item[(a)] some representative of $f$ maps $\Sigma_{[k]}$ onto $\Sigma_I$;
\item[(b)] the induced action of $f$ on $\pi_1(\Sigma,p_0)$ satisfies $f^*(\alpha_j)=\delta_{j}$ and $f^*(\beta_j)=\delta'_{j}$ for all $1\leq j\leq k$.
\end{itemize}
Moreover, suppose that $f'\in \Mod(\Sigma)$ is another element satisfying (a) and (b). Then $f'$ coincides with $f$ on $\Sigma_{[k]}$, and therefore,
$f s f^{-1}=f' s (f')^{-1}$ for all $s\in \Mod_{[k]}$.
\end{Lemma}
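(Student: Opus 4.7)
The plan is to construct $f$ in two stages. First, Observation~\ref{obs:subsurfaces}(4) supplies an orientation-preserving self-homeomorphism $g_0$ of $\Sigma$ fixing $\partial\Sigma$ pointwise with $g_0(\Sigma_{[k]}) = \Sigma_I$. Its induced action sends the natural basis $(\alpha_j,\beta_j)_{j=1}^k$ of $\pi_1(\Sigma_{[k]},p_0)$ to a basis $(g_0^*(\alpha_j),g_0^*(\beta_j))_{j=1}^k$ of $\pi_1(\Sigma_I,p_0)$, and this image is itself a natural basis of $\pi_1(\Sigma_I,p_0)$ because $g_0$ carries $(\partial\Sigma_{[k]})_{p_0}$ to $(\partial\Sigma_I)_{p_0}$ while preserving commutator products. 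Second, the transitivity statement in remark (a) after Theorem~\ref{thm:Zieschang}, applied inside $\Sigma_I$, yields an $h \in \Mod(\Sigma_I)$ carrying this natural basis to $(\delta_j,\delta'_j)_{j=1}^k$. Viewing $h$ as an element of $\Mod(\Sigma)$ with a representative supported on $\Sigma_I$, the product $f = h \cdot g_0$ then sends $\Sigma_{[k]}$ onto $\Sigma_I$ and satisfies $f^*(\alpha_j) = h^*(g_0^*(\alpha_j)) = \delta_j$, and similarly for $\beta_j$.

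\textbf{Uniqueness.} Given another $f'$ satisfying (a) and (b), I fix representatives $\tilde f, \tilde f'$ as in (a) and set $u = \tilde f^{-1} \tilde f'$. Then $u$ is a self-homeomorphism of $\Sigma$ fixing $\partial\Sigma$ pointwise, preserving $\Sigma_{[k]}$ setwise, and sending each $\alpha_j,\beta_j$ to itself; since these generate $\pi_1(\Sigma_{[k]},p_0)$, $u^*$ is trivial on this subgroup. My plan is to isotope $u$, rel $\partial\Sigma$, to a homeomorphism which is the identity on $\Sigma_{[k]}$; translating this isotopy back to $\tilde f'$ then gives $\tilde f'|_{\Sigma_{[k]}} = \tilde f|_{\Sigma_{[k]}}$. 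From here, for any $s \in \Mod_{[k]}$ with representative $\tilde s$ supported on $\Sigma_{[k]}$, the conjugates $\tilde f \tilde s \tilde f^{-1}$ and $\tilde f' \tilde s (\tilde f')^{-1}$ are both supported on $\Sigma_I$ and determined on $\Sigma_I$ by the common restriction, so they coincide as homeomorphisms of $\Sigma$, yielding $fsf^{-1} = f's(f')^{-1}$ in $\Mod(\Sigma)$.

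\textbf{The main obstacle} is the isotoping step just described. The restriction $u|_{\Sigma_{[k]}}$ is an orientation-preserving self-homeomorphism of $\Sigma_{[k]}$ with trivial action on $\pi_1(\Sigma_{[k]},p_0)$, but it fixes pointwise only the arc $A = \Sigma_{[k]} \cap \partial\Sigma$, whereas Theorem~\ref{thm:Zieschang} applied to $\Sigma_{[k]}$ requires pointwise fixity of the entire boundary circle $\partial\Sigma_{[k]} = A \cup B$. I plan to handle the interior arc $B$ by hand: since $u$ preserves $\partial\Sigma_{[k]}$ setwise and fixes the two endpoints of $B$ (they lie on $A$), the restriction $u|_B$ is a self-homeomorphism of an interval fixing the endpoints, hence isotopic to the identity, and this isotopy extends over a collar neighborhood of $B$ inside $\Sigma_{[k]}$ while keeping $A$ fixed. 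After this preliminary adjustment, $u|_{\Sigma_{[k]}}$ fixes $\partial\Sigma_{[k]}$ pointwise, and the injectivity in Theorem~\ref{thm:Zieschang} applied to $\Sigma_{[k]}$ produces the desired isotopy rel $\partial\Sigma_{[k]}$, which extends by the identity on $\Sigma \setminus \Sigma_{[k]}$ to give the required modification of $\tilde f'$.
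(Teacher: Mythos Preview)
Your proof is correct and follows essentially the same approach as the paper's. For existence, both arguments use Observation~\ref{obs:subsurfaces}(4) to produce a map $\Sigma_{[k]}\to\Sigma_I$ and then invoke Theorem~\ref{thm:Zieschang} on $\Sigma_I$ to adjust the image basis; for uniqueness, both reduce to showing that $(f')^{-1}f$ restricts trivially to $\Sigma_{[k]}$ via the injectivity in Theorem~\ref{thm:Zieschang}. You are in fact more explicit than the paper on one point: the paper simply applies the injectivity of Theorem~\ref{thm:Zieschang} to $\Sigma_{[k]}$ without discussing why the restriction of $(f')^{-1}f$ may be taken to fix $\partial\Sigma_{[k]}$ pointwise, whereas you correctly isolate this step and handle it by straightening the interior arc $B=\partial\Sigma_{[k]}\setminus A$ via a collar isotopy before appealing to Theorem~\ref{thm:Zieschang}.
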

\begin{proof} We first construct an element $f$ satisfying (a) and (b).
By Observation~\ref{obs:subsurfaces}(4), there exists $g\in \Mod(\Sigma)$ such that some representative of $g$ maps $\Sigma_{[k]}$ to $\Sigma_{I}$.

Since $(\partial \Sigma_{[k]})_{p_0}= \prod\limits_{j=1}^k [\alpha_j,\beta_j]$ and 
$(\partial \Sigma_{I})_{p_0}= \prod\limits_{j=1}^k [\delta_j,\delta'_j]$ in $\pi_1(\Sigma,p_0)$, 
the induced action of $g$ on $\pi_1(\Sigma,p_0)$ satisfies
\begin{equation}
\label{eq:gI} 
g^*(\prod\limits_{j=1}^k [\alpha_j,\beta_j])=\prod\limits_{j=1}^k [\delta_j,\delta'_j].
\end{equation}

Since a representative of $g$ maps $\Sigma_{[k]}$ homeomorphically onto $\Sigma_I$ and
$\{\alpha_j,\beta_j\}_{j=1}^k$ generate $\pi_1(\Sigma_{[k]},p_0)$, the elements 
$\{g^*(\alpha_j),g^*(\beta_j)\}_{j=1}^k$ generate $\pi_1(\Sigma_I,p_0)$. 

Since $\pi_1(\Sigma_I,p_0)$ is free of rank $2k$ and the automorphism group of a free group $F$ acts transitively on the sets
of bases of $F$, there exists 
$\phi\in\Aut\, (\pi_1(\Sigma_I, p_0))$ such that $\phi(\delta_j)=g^*(\alpha_j)$ and $\phi(\delta'_{j})=g^*(\beta_j)$ for all $j\in [k]$. 
Then $$\phi(\prod\limits_{j=1}^k [\delta_j,\delta'_j])=\prod\limits_{j=1}^k [\phi(\delta_j),\phi(\delta'_{j})]
=\prod\limits_{j=1}^k [g^*(\alpha_j),g^*(\beta_j)]=g^*(\prod\limits_{j=1}^k [\alpha_j,\beta_j])=
\prod\limits_{j=1}^k [\delta_j,\delta'_j],$$
(where the last equality holds by \eqref{eq:gI}), so
by Theorem~\ref{thm:Zieschang} applied to the surface $\Sigma_I$, there exists $h\in\Mod(\Sigma_I)$ such that
$h^*=\phi$. If we extend $h$ to $\Mod(\Sigma)$ by
letting it act trivially on $\Sigma\setminus \Sigma_I$, then clearly $f=h^{-1}g$ satisfies both (a) and (b). 
\vskip .12cm
\vskip .12cm
Let us now prove the `moreover' part. Suppose that $f'\in \Mod(\Sigma)$ satisfies both (a) and (b), and let $h=(f')^{-1}f$. By (a) some representative of $h$ stabilizes $\Sigma_{[k]}$. By (b) $h^*$ acts trivially on 
$\pi_1(\Sigma_{[k]})$, and hence by the injectivity part of Theorem~\ref{thm:Zieschang} $h$ acts trivially on $\Sigma_{[k]}$,
so $f'$ coincides with $f$ on $\Sigma_{[k]}$. Finally, any $s\in \Mod_{[k]}$ is supported on $\Sigma_{[k]}$, so $hs=sh$, which yields the last assertion. 
\end{proof}

The following corollary describes a key special case of Lemma~\ref{lem:iso}.

\begin{Corollary}
\label{cor:iso} Let $I$ be a subset of $[n]$, let $k=|I|$, and let $i_1<i_2<\ldots<i_k$ be the elements of $I$ listed in increasing order. 
Then there exists an element $f_I\in \Mod(\Sigma)$ such that
\begin{itemize}
\item[(a)] some representative of $f_I$ maps $\Sigma_{[k]}$ onto $\Sigma_I$;
\item[(b)] the induced action of $f_I$ on $\pi_1(\Sigma)$ satisfies $f_I^*(\alpha_j)=\alpha_{i_j}$ and $f_I^*(\beta_j)=\beta_{i_j}$ for all $1\leq j\leq k$.
\end{itemize}
Moreover, if $f_I\in \Mod(\Sigma)$ is any element satisfying (a) and (b), then
for any subset $J$ of $[k]$, some representative of $f_I$ maps $\Sigma_J$ onto $\Sigma_{I_J}$ where $I_J=\{i_j: j\in J\}$.
\end{Corollary}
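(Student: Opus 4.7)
The existence part will follow immediately from Lemma~\ref{lem:iso} by taking $\delta_j=\alpha_{i_j}$ and $\delta_j'=\beta_{i_j}$ for $1\le j\le k$. I need to verify the three hypotheses: the curves lie on $\Sigma_I$ by property (i) of the construction since $i_j\in I$; they form a basis of $\pi_1(\Sigma_I,p_0)$ (a free group of rank $2k$), as these are the standard generators built into the definition of $\Sigma_I$; and their iterated commutator $\prod_{j=1}^k[\alpha_{i_j},\beta_{i_j}]$ equals $(\partial\Sigma_I)_{p_0}$ by property (iii), using the chosen increasing ordering $i_1<\cdots<i_k$.

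For the moreover part, let $f_I$ be any element satisfying (a) and (b), let $l=|J|$, and write $J=\{j_1<\cdots<j_l\}$ and $I_J=\{i_{j_1}<\cdots<i_{j_l}\}$. My plan is to show that the image subsurface $f_I(\Sigma_J)\subseteq\Sigma$ is isotopic, relative to $\partial\Sigma$, to $\Sigma_{I_J}$. Once this is established, I can compose $f_I$ with an ambient isotopy of $\Sigma$ fixing $\partial\Sigma$ that realizes this subsurface isotopy, obtaining a representative of $f_I$ (in the same mapping class, since the ambient isotopy is isotopic to the identity rel $\partial\Sigma$) that literally sends $\Sigma_J$ onto $\Sigma_{I_J}$.

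To prove the subsurface isotopy, I will verify that $f_I(\Sigma_J)$ satisfies all the defining properties of $\Sigma_{I_J}$ and then invoke the uniqueness of $\Sigma_{I_J}$ up to isotopy, asserted in the text just before Observation~\ref{obs:subsurfaces}. Both subsurfaces have genus $l$ and one boundary component (preserved by the homeomorphism $f_I$); both contain the curves $\alpha_{i_{j_m}}$ and $\beta_{i_{j_m}}$ for $m=1,\dots,l$ -- for $f_I(\Sigma_J)$ this uses hypothesis (b) of the corollary, since $f_I^*(\alpha_{j_m})=\alpha_{i_{j_m}}$ and $f_I^*(\beta_{j_m})=\beta_{i_{j_m}}$; both meet $\partial\Sigma$ in the same interval, because any representative of $f_I$ fixes $\partial\Sigma$ pointwise and the interval $\Sigma_J\cap\partial\Sigma$ is independent of the index set by property (iv); and both have boundary freely homotopic to $\prod_{m=1}^{l}[\alpha_{i_{j_m}},\beta_{i_{j_m}}]$, which for $f_I(\Sigma_J)$ follows by applying $f_I^*$ to property (iii) written out for $\Sigma_J$.

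The step most likely to require care is the upgrade from the isotopy of subsurfaces to an ambient isotopy of $\Sigma$ rel $\partial\Sigma$ that can be absorbed into $f_I$ without changing its mapping class. This is a routine application of the isotopy extension theorem for compact surfaces with boundary, but some attention is needed because both subsurfaces meet $\partial\Sigma$ along a nontrivial arc; one must ensure the extending ambient isotopy is stationary on $\partial\Sigma$, which is possible precisely because the two subsurfaces agree on their boundary trace in $\partial\Sigma$ (by property (iv) and the fact that $f_I$ fixes $\partial\Sigma$ pointwise).
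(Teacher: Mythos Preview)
Your existence argument coincides with the paper's: both simply specialize Lemma~\ref{lem:iso} with $\delta_j=\alpha_{i_j}$, $\delta_j'=\beta_{i_j}$.

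For the moreover part you take a genuinely different route. You argue that the image subsurface $f_I(\Sigma_J)$ satisfies the defining properties (i)--(iv) of $\Sigma_{I_J}$, invoke the uniqueness-up-to-isotopy stated before Observation~\ref{obs:subsurfaces}, and then appeal to isotopy extension for subsurfaces. The paper instead works only with the boundary curve: it computes $f_I^*((\partial\Sigma_J)_{p_0})=(\partial\Sigma_{I_J})_{p_0}$ (this is exactly your verification of (iii)), uses that freely homotopic simple closed curves on a surface are isotopic, extends this curve isotopy to an ambient isotopy via \cite[Prop.~1.11]{FM}, and then runs a connected-components argument---using that $\widetilde f_I$ fixes $\partial\Sigma$ pointwise and that both complementary pieces meet $\partial\Sigma$---to pin down which side is the image. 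The paper's route is more elementary and self-contained (the isotopy extension is for a single curve and is cited explicitly), whereas yours relies on the unproved uniqueness assertion as a black box and on the stronger isotopy-extension step for subsurfaces meeting $\partial\Sigma$.

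There is one imprecision worth flagging. Your verification of property~(i) does not go through literally: the equality $f_I^*(\alpha_{j_m})=\alpha_{i_{j_m}}$ is a statement in $\pi_1$, so $f_I(\Sigma_J)$ contains a curve \emph{homotopic} to $\alpha_{i_{j_m}}$, not the curve $\alpha_{i_{j_m}}$ itself. Hence the uniqueness statement, as phrased for subsurfaces satisfying (i)--(iv), is not directly applicable. The fix is easy: properties (ii), (iii), (iv) alone already determine the isotopy class of the subsurface (indeed, (iii) pins down the boundary curve up to isotopy and then (iv), or the genus in (ii), selects the correct side), so (i) is redundant for the uniqueness you need. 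This is, in effect, exactly what the paper's boundary-curve-plus-components argument establishes directly.
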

\begin{proof} The first assertion is a special case of Lemma~\ref{lem:iso}, so we only need to prove the `moreover' part. So take any $f_I\in \Mod(\Sigma)$ satisfying (a) and (b). Then
\begin{equation}
\label{eq:sigma}
f_I^*((\partial \Sigma_{J})_{p_0})=f_I^*(\prod\limits_{j\in J}[\alpha_j,\beta_j])=\prod\limits_{i\in I_J}[\alpha_i,\beta_i]=
(\partial \Sigma_{I_J})_{p_0}.
\end{equation} 
Thus, if $\widetilde f_I$ is any orientation-preserving homeomorphism of $\Sigma$ which fixes 
$\partial \Sigma$ pointwise and represents $f_I$, then $\widetilde f_I(\partial \Sigma_{J})$ is homotopic (and 
hence isotopic) to $\partial \Sigma_{I_J}$. By \cite[Prop.~1.11]{FM}, an isotopy between 
$\widetilde f_I(\partial \Sigma_{J})$ and $\partial \Sigma_{I_J}$ can be extended to an isotopy of $\Sigma$ (acting trivially on $\partial\Sigma$).
Hence after perturbing $\widetilde f_I$ by an isotopy of $\Sigma$, we can assume that
$\widetilde f_I(\partial \Sigma_{J})=\partial \Sigma_{I_J}$ and hence
$\widetilde f_I(\Sigma\setminus \partial \Sigma_{J})=\Sigma\setminus \partial \Sigma_{I_J}$.

The topological spaces $\Sigma\setminus \partial \Sigma_{J}$ and $\Sigma\setminus \partial \Sigma_{I_J}$
both have two connected components: 
$\Sigma_J\setminus \partial\Sigma_J$ and $\Sigma\setminus\Sigma_J$ (resp. $\Sigma_{I_J}\setminus \partial\Sigma_{I_J}$ and $\Sigma\setminus\Sigma_{I_J}$). Since $\widetilde f_I$ acts trivially on
$\partial\Sigma$ and since the intersection 
$(\Sigma\setminus\Sigma_J)\cap (\Sigma\setminus\Sigma_{I_J})\cap \partial\Sigma$ is non-trivial by construction,
$\widetilde f_I$ must map $\Sigma\setminus\Sigma_J$ to $\Sigma\setminus\Sigma_{I_J}$.
Thus $\widetilde f_I(\Sigma_J\setminus \partial\Sigma_J)=\Sigma_{I_J}\setminus \partial\Sigma_{I_J}$
and hence $\widetilde f_I(\Sigma_J)=\Sigma_{I_J}$. 
\end{proof}

\subsection{An analogue of Lemma~\ref{lemma:SL_n}} In this subsection we will establish an analogue of Lemma~\ref{lemma:SL_n} for mapping class groups
(see Lemma~\ref{lemma:Sp_{2n}} below). The proof of the second part of Lemma~\ref{lemma:Sp_{2n}} will be postponed till the next subsection, as estimation of the constants $A$ and $B$ in the mapping class group case requires more work.

Since $\calI/\calK$ is the maximal torsion-free abelian quotient of $\calI$, we have $\Hom(\calI,\dbR)=\Hom(\calI/\calK,\dbR)$. 
By the same logic as in Section~3, there is a natural embedding of $\calM=\Mod(\Sigma)$ into $\Aut(\calI)$ and a natural
action of $\Sp(V)=\calM/\calI$ on $\Hom(\calI/\calK,\dbR)$.

The symplectic group $\Sp(V)\cong \Sp_{2n}(\dbZ)$ is generated by the elements $w_i$, $\tau_i$ for $i\in [n]$ and $\tau_{ij}$ for $i\neq j\in [n]$ defined as follows (all basis
elements whose image is not specified are fixed):
\[w_i\colon\begin{cases}a_i\mapsto b_{i}\\
b_i\mapsto -a_{i}\end{cases}
\qquad
\tau_{i}\colon a_i\mapsto a_i+ b_i\\
\qquad
\tau_{ij}\colon \begin{cases}a_i\mapsto a_i+ a_j\\
b_j\mapsto b_j-b_i\end{cases}.\]
In addition, for each $1\leq i\neq j\leq n$ let $f_{ij}\in \Sp(V)$ be the element which swaps $a_i$ and $a_j$ and swaps $b_i$ and $b_j$. We will refer to the elements $f_{ij}$ as transpositions.

 \begin{Lemma}
 \label{lemma:Sp_{2n}}
 Let $S$ be as in Lemma~\ref{lem:reggenset}, let $\chi$ be a nonzero character of $G=\calI$ and let $M=M(\chi)$. The following hold:
\begin{itemize}
\item[(a)] There exists $g\in \Sp_{2n}(\dbZ)$ and $Z\subset S$ with the following properties: 
\begin{itemize}
\item[(i)] $(S,Z)$ is chain-centralizing.
\item[(ii)] $|g\chi(z)|\geq M$ for all $z\in Z$.
\item[(iii)] $g$ is a product of at most 15 elements of the form $\tau_{ij}$ and 
and at most 3 transpositions $f_{ij}$.
\end{itemize}
\item[(b)] There exists an absolute constant $C_0$ (independent of $n$) such that any $g$ in part (a) 
admits a lift $\phi\in\Mod_n^1$ with $A(\phi^{\pm 1})\leq C_0$.
\end{itemize}
  \end{Lemma}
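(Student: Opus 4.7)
The plan is to imitate the proof of Lemma~\ref{lemma:SL_n}, replacing the $\GL_n(\dbZ)$-module $V^{*}\otimes(V\wedge V)$ by the $\Sp_{2n}(\dbZ)$-module $\wedge^{3}V$, and the unit transvections $E_{ij}$ by the symplectic transvections $\tau_{ij}$. Using the identification $\calI/\calK\cong \wedge^{3}V$, every character $\chi$ of $\calI$ becomes a linear functional on $\wedge^{3}V$ with coefficients in the basis $\calB$; by Lemma~\ref{lem:reggenset} the constant $M(\chi)$ equals the largest absolute value of such a coefficient. The natural analogues of the "diagonal" quantities $c_{iij}$ from Lemma~\ref{lemma:SL_n} are the coefficients $\chi(a_{i}\wedge b_{i}\wedge v)$ for $v\in\{a_{j},b_{j}\}$ with $j\neq i$, since these behave particularly well under the $\tau_{ij}$-action.

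For part (a) I would proceed in stages mirroring Steps~1--5 of Lemma~\ref{lemma:SL_n}. First, using at most two or three transpositions $f_{ij}$, arrange that $\chi'=g_{1}\chi$ has a coefficient of absolute value $\geq M$ on some basis element supported on indices $\{1,2\}$; by a case split as in Steps~2A/2B, apply at most three $\tau_{ij}$'s to make $|\chi'(a_{1}\wedge b_{1}\wedge v_{1})|\geq M$ for some $v_{1}\in\{a_{2},b_{2}\}$. Then iterate three more times, each time using one transposition together with three or four $\tau_{ij}$'s supported on fresh index pairs $\{3,4\}$, $\{5,6\}$, $\{7,8\}$, as in Steps~3--5 of Lemma~\ref{lemma:SL_n}, to produce $g\chi$ with $|g\chi(a_{2k-1}\wedge b_{2k-1}\wedge v_{k})|\geq M$ for $k=1,2,3,4$ and suitable $v_{k}$. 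A bookkeeping count gives at most $15$ transvections and at most $3$ transpositions. For $Z\subseteq S$, I take four generators $z_{k}\in S$ (in $S_{I_{k}}$ for any $3$-subset $I_{k}\supseteq\{2k-1,2k\}$) whose projections to $\wedge^{3}V$ are these four basis vectors; since each $z_{k}$ lies in $\calI_{\{2k-1,2k\}}$ and the pairs $\{2k-1,2k\}$ are pairwise disjoint and pairwise uncrossed, the elements of $Z$ commute pairwise by Observation~\ref{obs:uncrossed}(iii). Chain-centralization follows by a pigeonhole argument: for any $s\in S$ supported on a $3$-subset $I$ with $|I|=3\leq n-5$, at least one of the four disjoint pairs $\{2k-1,2k\}$ avoids $I$ and is consecutive, hence uncrossed with $I$, so the corresponding $z_{k}$ commutes with both $s$ and every other element of $Z$.

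For part (b), the strategy is to lift each factor $\tau_{ij}$ or $f_{ij}$ individually to a mapping class whose $A$-value is bounded by an absolute constant, and then apply Lemma~\ref{lem:AB}(b) together with the fact that $g$ is a product of at most $18$ factors to obtain an absolute bound $C_{0}$. The key observation is that each generator $\tau_{ij}$ or $f_{ij}$ acts nontrivially only on the indices $\{i,j\}$ and so admits a lift $\tilde\phi\in\Mod(\Sigma)$ supported on a subsurface of genus at most $2$ (independent of $n$). Consequently, $\tilde\phi$ conjugates each generator $s\in S_{I}$ to an element lying in $\calI_{I\cup\{i,j\}}$, which is itself generated by the sets $S_{J}$ for $3$-subsets $J\subseteq I\cup\{i,j\}$. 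Using the coherence of the generating sets $\{S_{J}\}$ under the isomorphisms from Corollary~\ref{cor:iso}, each such conjugate can be written as a bounded-length word in $S$, the length bound depending only on a fixed computation in $\calI_{5}^{1}$.

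The main obstacle is this last step: making the lifts and the rewriting procedure explicit requires organizing the generating sets $S_{I}$ coherently across all $3$-subsets $I$, via a preferred reference set $S_{\{1,2,3\}}$ pushed forward by the homeomorphisms of Corollary~\ref{cor:iso}. Once this compatibility is set up, the bound on $A(\phi^{\pm 1})$ reduces to a finite computation in a fixed small-rank Torelli group (independent of $n$), which is exactly the content deferred to the next subsection. Part (a), by contrast, is essentially routine linear algebra on $\wedge^{3}V$ in the spirit of Lemma~\ref{lemma:SL_n}.
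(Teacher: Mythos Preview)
Your outline matches the paper's approach closely: part~(a) is the symplectic analogue of Lemma~\ref{lemma:SL_n} carried out on $\wedge^{3}V$, and part~(b) reduces to a bounded computation in $\calI_{5}^{1}$ via the coherent generating sets $S_{I}=f_{I}S_{[3]}f_{I}^{-1}$, which is exactly the content of the paper's Proposition~\ref{prop:Aestimate}. Two small points to fix. First, the iteration in part~(a) that propagates a large coefficient from $\{1,2\}$ to $\{3,4\},\{5,6\},\{7,8\}$ uses only powers of $\tau_{ij}$ (the paper's Steps~2--4 and their repetitions), not further transpositions; your phrase ``each time using one transposition'' is inconsistent with your own bound of at most $3$ transpositions and should be dropped. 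Second, your chain-centralizing argument assumes $z_{k}\in\calI_{\{2k-1,2k\}}$, but this does not follow just from $z_{k}\in S_{I_{k}}$ (with $|I_{k}|=3$) having image in $\wedge^{3}V_{\{2k-1,2k\}}$; one must choose $S_{[3]}$ so that the generators mapping to basis vectors supported on a $2$-element set are themselves supported on the corresponding genus-$2$ subsurface. The paper is equally brief on this point, but you should flag that it constrains the choice of $S$.
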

 \begin{Remark}\rm
Arguing exactly as in the proof of Corollary~\ref{cor:reghypIAn}, we deduce from Lemma~\ref{lemma:Sp_{2n}} that the pair
$(G,K)=(\calI,\calK)$ satisfies the Regularity Hypothesis for $C=1$ and some finite $\Phi$ with $B(\Phi)\leq A(\Phi)\leq C_0$. 
 \end{Remark}
  
 \begin{proof} Unlike the case of $\Aut(F_n)$, the set $Z$ will depend on $\chi$, but there will be only boundedly many possibilities, and in each case $Z$ will contain $4$ elements, exactly one element from each of the sets $S\cap G_{I}$ for $I=\{1,2\},\{3,4\},\{5,6\}$ and $\{7,8\}$.
For each such $Z$ the pair $(S,Z)$ is chain-centralizing -- this follows immediately from Observation~\ref{obs:uncrossed}(iii).
\medskip

The proof of Lemma~\ref{lemma:Sp_{2n}}(a) is very similar to that of Lemma~\ref{lemma:SL_n}(a), so we will just list the main steps and skip the details of the computations. As in the proof of Lemma~\ref{lemma:SL_n}, in each step $M$ denotes a positive real number and $\lambda$ is an arbitrary character of $G$.

\medskip 
\noindent
{\it Step 1:} If $M(\lambda)\geq M$, there exists $g\in \Sp_{2n}(\dbZ)$ which  is a product of at most 3 transpositions $f_{ij}$
such that $gz\in \calB\cap V_{\{1,2\}}$ or $gz\in \calB\cap V_{\{1,2,3\}}$.

Below we will consider the case where $z$ in Step~1 lies in $V_{\{1,2\}}$; the other case is similar. Without loss of generality we can assume that $z=a_1\wedge b_1\wedge b_2$.

\medskip
\noindent
 {\it Step 2:} If $|\lambda(a_1\wedge b_1\wedge b_2)|\geq M$, there exists $g=\tau_{13}^e$ with $|e|\leq 2$ such that
 $|g\lambda(a_1\wedge b_1\wedge b_2)|\geq M$ and $|g\lambda(a_1\wedge b_3\wedge b_2)|\geq M$.
 
 \medskip
\noindent
 {\it Step 3:} If $|\lambda(a_1\wedge b_1\wedge b_2)|\geq M$ and $|\lambda(a_1\wedge b_3\wedge b_2)|\geq M$, there exists
 $g=\tau_{31}^e$ with $|e|\leq 2$ such that
 $|g\lambda(a_1\wedge b_1\wedge b_2)|\geq M$ and $|g\lambda(a_3\wedge b_3\wedge b_2)|\geq M$.

\medskip 
\noindent
 {\it Step 4:} $|\lambda(a_1\wedge b_1\wedge b_2)|\geq M$ and $|\lambda(a_3\wedge b_3\wedge b_2)|\geq M$, there exists
 $g=\tau_{24}^{e}$ with $|e|\leq 1$ such that $|g\lambda(a_1\wedge b_1\wedge b_2)|\geq M$ and $|g\lambda(a_3\wedge b_3\wedge b_4)|\geq M$.
 
\medskip
 
 Combining Steps~1-4, we conclude that there exists $g_1\in\Sp_{2n}(\dbZ)$ equal to the product of at most 3 transpositions and at most 5 elements 
of the form $\tau_{13}^{\pm 1},\tau_{31}^{\pm 1}, \tau_{24}^{\pm 1}$ such that $|g_1\chi(z)|\geq M$ for some $z\in V_{\{1,2\}}\cap \calB$ and also for some 
$z\in V_{\{3,4\}}\cap \calB$. Repeating Steps~2-4 two more times, first replacing indices $3$ and $4$ by $5$ and $6$, respectively, and then by
$7$ and $8$, we obtain an element $g$ satisfying the conclusion of Lemma~\ref{lemma:Sp_{2n}}.
\medskip

(b) For an essential simple closed curve $\alpha$, the action of $T_{\alpha}$ on $H_1(\Sigma)$ is given by the following formula (see, e.g., \cite[Prop. 6.3]{FM}):
\footnote{Proposition~6.3 in \cite{FM} makes the additional assumption that the curve $\beta$ is also simple.
However, since $H_1(\Sigma)$ is generated by the homology classes of simple closed curves, by linearity 
\eqref{eq:DehnTwistAction} holds for all $\beta$.}
\begin{equation}
\label{eq:DehnTwistAction}
T_{\alpha}([\beta])=[\beta]+([\beta]\cdot [\alpha])[\alpha].
\end{equation}
A direct computation using this formula shows that
\begin{itemize}
\item[(i)] The Dehn twist $T_{\beta_i}$ is a lift of $\tau_i$;
\item[(ii)] the element $W_i=T_{\beta_i}T_{\alpha_i}^{-1} T_{\beta_i}$ is a lift of $w_i$;
\item[(iii)] the element $W_i T_{\alpha_i\alpha_j^{-1}}^{-1} T_{\alpha_i} T_{\alpha_j} W_i^{-1}$ is a lift 
of $\tau_{i,j}$.
\end{itemize}
Below we exhibit the calculation for (iii) (assuming the result for (ii)). Recall that $a_i=[\alpha_i]$
and $b_i=[\beta_i]$. We have
\begin{align*}
&a_i
\stackrel{W_i^{-1}}{\longmapsto}-b_i
\stackrel{T_{\alpha_j}}{\longmapsto} -b_i
\stackrel{T_{\alpha_i}}{\longmapsto} a_i-b_i
\stackrel{T_{\alpha_i\alpha_j^{-1}}^{-1}}{\longmapsto} a_i-b_i-(a_i-a_j)=a_j-b_i
\stackrel{W_i}{\longmapsto}a_j+a_i
&\\
&b_i
\stackrel{W_i^{-1}}{\longmapsto}a_i
\stackrel{T_{\alpha_j}}{\longmapsto} a_i
\stackrel{T_{\alpha_i}}{\longmapsto} a_i
\stackrel{T_{\alpha_i\alpha_j^{-1}}^{-1}}{\longmapsto} a_i
\stackrel{W_i}{\longmapsto}b_i
&\\
&a_j
\stackrel{W_i^{-1}}{\longmapsto}a_j
\stackrel{T_{\alpha_j}}{\longmapsto} a_j
\stackrel{T_{\alpha_i}}{\longmapsto} a_j
\stackrel{T_{\alpha_i\alpha_j^{-1}}^{-1}}{\longmapsto} a_j
\stackrel{W_i}{\longmapsto}a_j
&\\
&b_j
\stackrel{W_i^{-1}}{\longmapsto}b_j
\stackrel{T_{\alpha_j}}{\longmapsto} b_j-a_j
\stackrel{T_{\alpha_i}}{\longmapsto} b_j-a_j
\stackrel{T_{\alpha_i\alpha_j^{-1}}^{-1}}{\longmapsto} b_j-a_j-(a_i-a_j)=b_j-a_i
\stackrel{W_i}{\longmapsto}b_j-b_i.
\end{align*}
In the case of transpositions $f_{ij}$ it will be more convenient to define lifts directly instead of expressing them as products of Dehn twists. Given distinct $i<j\in [n]$, let $F_{ij}$ be the unique element of $\Mod(\Sigma)$ which is supported on $\Sigma_{\{i,j\}}$ and acts on $\pi_1(\Sigma_{\{i,j\}})$ as follows:
\begin{align*}
&\alpha_i\mapsto \alpha_j&
&\beta_i\mapsto \beta_j&
&\alpha_j\mapsto \alpha_i^{[\alpha_j,\beta_j]}&
&\beta_j\mapsto \beta_i^{[\alpha_j,\beta_j]}.&
\end{align*}
Such an element $F_{ij}$ exists (and is unique) by Lemma~\ref{lem:iso}. It is clear that $F_{ij}$ is a lift of 
$f_{ij}$.

Now any $g$ in part (a) has a lift $\phi$  which can be written as a product of at most $3$ transposition-lifts $F_{ij}$ and at most $135$ Dehn twists $T_{\alpha_i}$, $T_{\beta_i}$ or $T_{\alpha_i\alpha_j^{-1}}$
or their inverses. 
By \eqref{eq:AB}, in order to get an absolute bound for $A(\phi^{\pm 1},S)$, it suffices to prove the following proposition.

\begin{Proposition} 
\label{prop:Aestimate}
For a suitable choice of $S$ there exists an absolute constant $C$ (independent of $n$)
such that $A(g,S)\leq C$ for $g=T_{\alpha_i}^{\pm 1}$, $T_{\beta_i}^{\pm 1}$, $T_{\alpha_i \alpha_j^{-1}}^{\pm 1}$ or $F_{ij}^{\pm 1}$.
\end{Proposition}
\begin{Remark}\rm The assertion of Proposition~\ref{prop:Aestimate} does not appear to be obvious even if we restrict ourselves to, say,
$g=T_{\alpha_i}$ with $i$ fixed but $n$ tending to infinity.
\end{Remark}

Proposition~\ref{prop:Aestimate} will be proved in the next subsection.
\end{proof}

\subsection{Estimating the $A$ constants}

Recall that by Corollary~\ref{cor:iso}, for every non-empty subset $I=\{i_1<i_2<\ldots< i_k\}$ of $[n]$
there exists an element $f_I\in\Mod(\Sigma)$ which maps $\Sigma_{[k]}$ to $\Sigma_I$ and satisfies
$f_I^*(\alpha_j)=\alpha_{i_j}$ and $f_I^*(\beta_j)=\beta_{i_j}$ for all $1\leq j\leq k$. 
From now on we will fix such an element $f_I$ for every $I$.

Let us record one more simple observation, which is an immediate consequence of the moreover parts of
Lemma~\ref{lem:iso} and Corollary~\ref{cor:iso}.

\begin{Observation}
\label{obs:iso}
Let $I=\{i_1<i_2<\ldots< i_k\}$ be a subset of $[n]$, let $J$ be a subset of $[k]$, and let $I_J=\{i_j: j\in J\}$. Then
the elements $f_{I_J}$ and $f_I f_J$ coincide on $\Sigma_{[|J|]}$ and hence $f_{I_J}s f_{I_J}^{-1}=f_I f_J s (f_I f_J)^{-1}$
for all $s\in \Mod_{[|J|]}$.
\end{Observation}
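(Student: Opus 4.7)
The plan is to show that $f_I f_J$ satisfies the two defining properties of $f_{I_J}$ listed in Corollary~\ref{cor:iso}, and then appeal to the uniqueness statement in the ``moreover'' part of Lemma~\ref{lem:iso} to conclude that $f_{I_J}$ and $f_I f_J$ must coincide on $\Sigma_{[|J|]}$. The final equality $f_{I_J} s f_{I_J}^{-1} = f_I f_J \, s \, (f_I f_J)^{-1}$ for $s \in \Mod_{[|J|]}$ is then automatic, since any such $s$ is supported on $\Sigma_{[|J|]}$.

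Let me spell out the verification. Write $m = |J|$ and $J = \{j_1 < j_2 < \cdots < j_m\}$, so $I_J = \{i_{j_1} < i_{j_2} < \cdots < i_{j_m}\}$. For condition (a) of Corollary~\ref{cor:iso} applied to $I_J$, I need some representative of $f_I f_J$ to map $\Sigma_{[m]}$ to $\Sigma_{I_J}$. By the definition of $f_J$, some representative maps $\Sigma_{[m]}$ to $\Sigma_J$; and by the ``moreover'' part of Corollary~\ref{cor:iso} applied to $f_I$ (with its internal ``$J$'' taken to be our $J \subseteq [k]$), some representative of $f_I$ maps $\Sigma_J$ to $\Sigma_{I_J}$. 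Composing representatives gives (a). For condition (b), using the functoriality $(f_I f_J)^* = f_I^* \circ f_J^*$, I compute for each $r \in [m]$:
\[
(f_I f_J)^*(\alpha_r) = f_I^*(f_J^*(\alpha_r)) = f_I^*(\alpha_{j_r}) = \alpha_{i_{j_r}},
\]
and similarly $(f_I f_J)^*(\beta_r) = \beta_{i_{j_r}}$, since $j_r \in [k]$ and $f_I^*$ sends $\alpha_{j_r}, \beta_{j_r}$ to $\alpha_{i_{j_r}}, \beta_{i_{j_r}}$ by the definition of $f_I$. This matches exactly the action prescribed for $f_{I_J}$, verifying (b).

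With both (a) and (b) in hand for $f_I f_J$, the uniqueness clause of Lemma~\ref{lem:iso} (applied to the surface $\Sigma_{I_J}$ and the natural basis $\{\alpha_{i_{j_r}}, \beta_{i_{j_r}}\}$ of $\pi_1(\Sigma_{I_J}, p_0)$) forces $f_I f_J$ and $f_{I_J}$ to coincide on $\Sigma_{[m]}$, and the conjugation equality follows. There is no real obstacle here --- the only mild subtlety is keeping the indexing straight (the $J$ appearing inside Corollary~\ref{cor:iso} is played by our $J \subseteq [k]$, not by the original $J$ of the observation) and remembering that $f^*$ is covariantly functorial so that the composition $(f_I f_J)^*$ unwinds in the correct order.
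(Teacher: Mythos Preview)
Your proof is correct and follows essentially the same route as the paper's own argument: verify that $f_I f_J$ satisfies both conditions (a) and (b) of Corollary~\ref{cor:iso} for the subset $I_J$, then invoke the uniqueness clause of Lemma~\ref{lem:iso}. Your write-up is in fact slightly cleaner than the paper's, which contains a minor typo (writing $\Sigma_{[k]}$ where $\Sigma_{[|J|]}$ is meant).
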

\begin{proof}
Let $t=|J|$, and write $J=\{j_1<\ldots<j_t\}$.
Recall that some representative of $f_J$ maps $\Sigma_{[k]}$ to $\Sigma_J$, and by 
Corollary~\ref{cor:iso} some representative of $f_I$ maps $\Sigma_J$ to $\Sigma_{I_J}$. Thus, if
$f=f_I f_J$, the following hold:
\begin{itemize}
\item[(i)] Some representative of $f$ maps $\Sigma_{[k]}$ to $\Sigma_{I_J}$.
\item[(ii)] For all $1\leq m\leq t$ we have $f^*(\alpha_m)=f_I^*(\alpha_{j_m})=\alpha_{i_{j_m}}$
and similarly $f^*(\beta_m)=\beta_{i_{j_m}}$.
\end{itemize}
If $f=f_{I_J}$, then both (i) and (ii) also hold by construction. Hence, the assertion of
Observation~\ref{obs:iso} follows from the moreover part of Lemma~\ref{lem:iso}.
\end{proof}

We are now ready to prove Proposition~\ref{prop:Aestimate}:

\begin{proof}[Proof of Proposition~\ref{prop:Aestimate}] Define a generating set $S$ for $\calI$ as follows:

\medskip
\noindent
{\bf Definition of $S$:} Choose any generating set $S_{[3]}$ of $\calI_{[3]}$ satisfying the requirement of Lemma~\ref{lem:reggenset}. For every $I\subseteq [n]$ with $|I|=3$ define $S_I=f_I S_{[3]} f_I^{-1}$ and let $S=\bigcup_{|I|=3}S_I$. It is clear that each $S_I$
(and hence $S$) also satisfy the requirement of Lemma~\ref{lem:reggenset}.
\medskip

By definition, $A(g,S)=A(g,\bigcup\limits_{|L|=3}S_L)=\max\limits_{|L|=3} A(g,S_L)$, so it is enough to bound $A(g,S_L)$ with $|L|=3$.
From now on we fix $L$ with $|L|=3$.
\vskip .12cm
{\it Case 1:} $g=T_{\alpha_u\alpha_{v}^{-1}}^{\pm 1}$ for some $u\neq v$. 

Since the Dehn twist does not depend on the orientation of the curve, without loss of generality we can assume that
$u<v$. Below we will consider the subcase $g=T_{\alpha_u\alpha_{v}^{-1}}$; the subcase $g=T_{\alpha_u\alpha_{v}^{-1}}^{-1}$ is analogous. 

Let $I=L\cup\{u,v\}$ (thus $3\leq |I|\leq 5$). Let
$i_1<i_2<\ldots<i_{|I|}$ be the elements of $I$ listed in increasing order. Then, in the notations of Observation~\ref{obs:iso}
we have $L=I_J$ where $J$ is a 3-element subset of $[|I|]\subseteq [5]$.

Now take any element $x\in S_{L}$. By definition, $x=f_L s f_L^{-1}$ where $s\in S_{[3]}$. Since $f_L=f_I f_J$ on $\Sigma_{[3]}$
by Observation~\ref{obs:iso} and $f_J s f_J^{-1}\in S_{J}$ (by definition of $S_J$), we have $x=f_I t f_I^{-1}$ for some $t\in S_J$.
Hence
\begin{equation}
\label{eq:conj}
gxg^{-1}=g (f_I t f_I^{-1})g^{-1}=f_I (f_I^{-1}gf_I)t(f_I^{-1}gf_I)^{-1}f_I^{-1}.
\end{equation}
By the conjugation formula for Dehn twists (see, e.g.,\cite[Fact~3.7]{FM}) we have 
$$f_I^{-1}gf_I=f_I^{-1}T_{\alpha_u \alpha_{v}^{-1}}f_I=T_{(f_I^{-1})^*(\alpha_u \alpha_{v}^{-1})}=
T_{(f_I^{-1})^*(\alpha_u) (f_I^{-1})^*(\alpha_{v})^{-1}}=T_{\alpha_r \alpha_{s}^{-1}}.$$
where $r,s\in [|I|]$ are the unique indices such that $i_r=u$ and $i_s=v$.

It follows that $f_I^{-1}gf_I$ is an element of $\Mod_{[|I|]}$, for which we have only boundedly many 
(in fact, at most ${5 \choose 2}=10$) possibilities.
Hence we also have boundedly many possibilities for $y=(f_I^{-1}gf_I)t(f_I^{-1}gf_I)^{-1}$ (since $t\in S_J$ and $J\subset [5]$
with $|J|=3$, we have at most ${5\choose 3}|S_{[3]}|=420$ possibilities for $t$ and hence at most 4200 possibilities for $y$). Note that
$y$ also lies in $\calI_{[|I|]}$ and thus we can write $y=t_1^{\pm 1}\ldots t_m^{\pm 1}$ where each $t_j\in S_{[|I|]}$ and $m\leq C_1$
for some absolute constant $C_1$. By \eqref{eq:conj} we have
$gxg^{-1}=f_I y f_I^{-1}=\prod\limits_{k=1}^m f_I t_k f_I^{-1}$. We claim that each factor in the latter product lies in $S$.
This would imply that $\|gxg^{-1}\|_{S}\leq m\leq C$ and thus finish the proof.

Indeed, for each $k$ as above we have $t_k\in S_{J_k}$ with $|J_k|=3$ and $J_k\subseteq [|I|]$, so $t_k=f_{J_k} s_k f_{J_k}^{-1}$ for some $s_k\in S_{[3]}$. Hence, using Observation~\ref{obs:iso} again we have
$$f_I t_k f_I^{-1}=f_I f_{J_k} s_k f_{J_k}^{-1} f_I^{-1}=f_{I_{J_k}}s_k f_{I_{J_k}}^{-1}\in S_{I_{J_k}}\subset S.$$
\vskip .12cm
{\it Case 2:} $g=T_{\alpha_u}^{\pm 1}$ or $T_{\beta_u}^{\pm 1}$ for some $u$. This case is similar to (and easier than) Case~1. 
\vskip .12cm

{\it Case 3:}  $g=F_{uv}^{\pm 1}$ for some $u<v$. The argument in this case is mostly similar to Case~1, so we will just outline the differences. First as in Case~1, without loss of generality we can assume that $g=F_{uv}$.

Fix $L\subseteq [n]$ , and let $I=L \cup\{u,v\}$ (thus $3\leq |I|\leq 5$). Take any $x\in S_L$. As in Case~1, $x=f_I t f_I^{-1}$ with $t\in S_{[|I|]}$, and we just need to find an appropriate expression for the conjugate $f_I^{-1}F_{uv}f_I$.

Let $i_1<i_2<\ldots<i_{|I|}$ be the elements of $I$ listed in increasing order, 
and let $r,s\in [|I|]$ be such that $u=i_r$ and $v=i_s$.
By definition of $f_I$, some representative of $f_I$ sends $\Sigma_{\{r,s\}}$ to $\Sigma_{\{u,v\}}$. Since $F_{uv}$ is trivial on the complement of
$\Sigma_{\{u,v\}}$, we conclude that $f_I^{-1}F_{uv}f_I$ is trivial on the complement of $\Sigma_{\{r,s\}}$. The action
on $\Sigma_{\{r,s\}}$ is determined by direct computation below. We have
\begin{gather}
\left(f_I^{-1}F_{uv}f_I\right)^*(\alpha_r)=\left(f_I^{-1}F_{uv}\right)^*(\alpha_u)=\left(f_I^{-1}\right)^*(\alpha_v)=\alpha_s \\
\left(f_I^{-1}F_{uv}f_I\right)^*(\alpha_s)=\left(f_I^{-1}F_{uv}\right)^*(\alpha_v)=\left(f_I^{-1}\right)^*(\alpha_u^{\alpha_v})=\alpha_r^{\alpha_s}.
\end{gather}
Similarly, $\left(f_I^{-1}F_{uv}f_I\right)^*(\beta_r)=\beta_s$ and $\left(f_I^{-1}F_{uv}f_I\right)^*(\beta_s)=\beta_r^{\beta_s}$.
Hence $f_I^{-1}gf_I=f_I^{-1}F_{uv}f_I=F_{rs}$. Thus, $f_I^{-1}gf_I$ is supported on $\Sigma_{[|I|]}$, and
we have boundedly many possibilities for $f_I^{-1}gf_I$, and we can finish the proof as in Case~1.
\end{proof}
\subsection{Conclusion of the proof of Theorem~\ref{thm:main_Torelli}}
\label{sec:pfTorelli2}

\begin{proof}[Proof of Theorem~\ref{thm:main_Torelli}]
Let us first prove Theorem~\ref{thm:main_Torelli}(1). Let $S$ be a generating set for $\calI$ constructed in 
Proposition~\ref{prop:Aestimate}. This set need not satisfy the conclusion of Theorem~\ref{thm:main_Torelli}(1). First we decompose $S=S_1\sqcup S_2\sqcup S_3$ as in the setup introduced before Theorem~\ref{preimage_effective}. Thus, $S_3=S\cap \mathcal K$, $S_1=\{s_1,\ldots,s_N\}$ is a subset of $S\setminus S_3$ such that the natural projection $\theta:\mathcal I\to \mathcal I/\mathcal K$ maps $S_1$ bijectively onto $\mathcal B$ (and $S_2=S\setminus (S_1\sqcup S_3)$). Also recall that for $s\in S_2$ we denote by $d(s)$ the unique integer such that $\theta(s)=\theta(s_{d(s)})$.
\vskip .12cm

Define $S^{(1)}=S_1$ and $S^{(4)}=S_3\cup \{s^{-1}s_{d(s)}: s\in S_2\}$. Then $\la S^{(1)}\cup S^{(4)}\ra=
\la S_1\sqcup S_2\sqcup S_3\ra=\calI$. Also note that $S^{(4)}$ lies in $\calK$ and the image of $S^{(4)}$
in $\calI^{ab}$ generates $\calK/[\calI,\calI]$ (the latter holds since the image of $S^{(1)}\cup S^{(4)}$ generates $\calI^{ab}$ and the images of elements of $S^{(1)}$ are linearly independent modulo $\calK$). Since $\calK/[\calI,\calI]$ is a vector space over $\dbF_2$, we can choose a subset $S^{(2)}$ of $S^{(4)}$ whose image
in $\calK/[\calI,\calI]$ is a basis of $\calK/[\calI,\calI]$. Finally, we can multiply each element of $S^{(4)}\setminus S^{(2)}$ by a product of elements of $S^{(2)}$ (on either side) so that the obtained element lies in $[\calI,\calI]$, and let $S^{(3)}$ be the set of all such elements. 

Since $S^{(1)}\sqcup S^{(2)}\sqcup S^{(3)}$ is obtained from $S$ by Nielsen transformations, it clearly generates $\calI$, and the additional properties asserted in Theorem~\ref{thm:main_Torelli}(1) hold by construction.

\medskip
Now let $\theta:\calI\to \calI/\calK$ be the natural projection. By Theorem~\ref{prop:connectedradius} and the remark after Lemma~\ref{lemma:Sp_{2n}}, the set $\theta^{-1}(B_{\infty}(R))$ is connected where $R$ is bounded by an absolute constant.
By Theorem~\ref{preimage_effective}, $\calK$ is generated by elements of the form 
\begin{itemize}
 \item[(a)] $[s_i,s_j]^{s_i^{a_i}s_{i+1}^{a_{i+1}}\ldots\,\, s_n^{a_n}}$ where $1\leq i<j\leq n$ and $|a_j|\leq R$ for all $j$;
 \item[(b)] $x^{s_1^{a_1}s_{2}^{a_{2}}\ldots s_n^{a_n}}$ where $x\in S^{(4)}$ and $|a_j|\leq R$ for all $j$.
\end{itemize}
This generating set is almost the same as the set in  Theorem~\ref{thm:main_Torelli}(2) -- the only difference is that the condition $x\in S^{(4)}$ above is replaced by $x\in S^{(2)}\sqcup S^{(3)}$.
But by construction the subgroup generated by $S^{(4)}$ is equal to the subgroup generated by $S^{(2)}$ and $S^{(3)}$, so the set described
in Theorem~\ref{thm:main_Torelli}(2) also generates $G$. This proves Theorem~\ref{thm:main_Torelli}(2).

Finally, Theorem~\ref{thm:main_Torelli}(3) follows from Theorem~\ref{thm:main_Torelli}(2) by a standard application of the Reidemeister-Schreier process and straightforward computations involving basic commutator identities. 
\end{proof}

\paragraph{\bf Estimating the constant $R$.} Let us now briefly address the problem of explicitly estimating the constant $R$ in
Theorem~\ref{thm:main_Torelli}. Let $S_{[3]}^{0}$ be the generating set of $\calI_{[3]}^1$ with $42$ elements constructed in Johnson's paper~\cite{Jo:fg}. We can algorithmically construct another generating set $S_{[3]}$ of $\calI_{[3]}^1$, also with 42 elements, satisfying the conclusion of Lemma~\ref{lem:reggenset}. Next for each $I\subseteq [5]$ with $|I|=3$ we choose $f_I$ satisfying the conclusion of Corollary~\ref{cor:iso} (it is easy to do this explicitly) and define $S_I=f_I S_{[3]} f_I^{-1}$. Also define $S_{[5]}=\bigcup\limits_{|I|=3, I\subset [5]}S_I$.

Now the key step is estimating the constant $C_1$ from the proof of Proposition~\ref{prop:Aestimate}. To do this, we need to explicitly express each conjugate $yty^{-1}$, where $t\in S_{[5]}$ and $y=T_{\alpha_i\alpha_j^{-1}}^{\pm 1}$, 
$T_{\alpha_i}^{\pm 1}$ or $T_{\beta_i}^{\pm 1}$, $1\leq i\neq j\leq 5$, as a product of elements of $S_{[5]}$. An algorithm for
obtaining such expressions is given in the Ph.D. thesis of Stylianakis~\cite{Sty}; however, some of the arguments in \cite{Sty}
rely on certain computations in \cite{Jo:fg}.

Finally, it is easy to express the constant $R$ in terms of $C_1$ following the steps of the proof of Theorem~\ref{thm:main_Torelli}(2).

\subsection{Proof of Theorem~\ref{thm:main_Torelli2}}

Throughout this subsection $S$ will denote the generating set for $\calI$ constructed in Proposition~\ref{prop:Aestimate}
(not the modified set from the proof of Theorem~\ref{thm:main_Torelli}). Let us recall some notations from the introduction. We let $\omega=\{\alpha_i,\beta_i\}_{i=1}^n$; also for $1\leq l\leq n$ we set $\omega_l=\{\alpha_i,\beta_i\}_{i=1}^l$. Given $m\in\dbN$
we denote by $T_{sc}(m)$ the set of all Dehn twists $T_{\gamma}$ where $\gamma\in \pi_1(\Sigma, p_0)$ can be
represented by a separating curve and $\|\gamma\|_{\omega}\leq m$.  

Theorem~\ref{thm:main_Torelli2} will be deduced from Theorem~\ref{thm:main_Torelli} (or rather its proof)
and the following technical lemma.

\begin{Lemma}
\label{lem:expansion}
 The following hold:
\begin{itemize}
\item[(a)] Let $w$ be any word in the free group on 2 generators. There exists a constant 
$C_w$ depending only on $w$ with the following property: for every $s,t\in S$ such that
$w(s,t)\in\calK$, the element $w(s,t)$ lies in $\la T_{sc}(C_w)\ra$, the subgroup generated by
$T_{sc}(C_w)$.
 \item[(b)] There exists an absolute constant $C_2$ such that
$\|s^*(\alpha)\|_{\omega}\leq C_2\|\alpha\|_{\omega}$ for all $\alpha\in \pi_1(\Sigma,p_0)$ and $s\in S^{\pm 1}$.
\end{itemize}
\end{Lemma}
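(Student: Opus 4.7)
The plan is to exploit the ``local'' structure of the generating set $S$ constructed in Proposition~\ref{prop:Aestimate}: every $s\in S$ has the form $f_I\tilde s f_I^{-1}$ with $|I|=3$ and $\tilde s\in S_{[3]}^{\pm 1}$, and is in particular supported on the genus-$3$ subsurface $\Sigma_I$. This reduces questions about $S$ to questions about a fixed finite set acting on a fixed-genus subsurface, with the conjugation by $f_K$ providing a controlled ``relabeling'' of basis elements of $\pi_1(\Sigma,p_0)$.

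For (b), I would fix $s=f_I\tilde s f_I^{-1}$ with $|I|=3$ and a generator $u\in\omega^{\pm 1}$, say $u=\alpha_j^{\pm 1}$ or $\beta_j^{\pm 1}$. If $j\in I$, then $u\in\pi_1(\Sigma_I,p_0)$ and $s$ acts on this free group of rank $6$ by an automorphism determined by $\tilde s\in S_{[3]}^{\pm 1}$; since $S_{[3]}^{\pm 1}$ is a finite set and $\{\alpha_l,\beta_l:l\in I\}\subseteq\omega$, the $\omega$-length of $s^*(u)$ is bounded by an absolute constant. If $j\notin I$, the singleton $\{j\}$ and $I$ are automatically uncrossed, so by Observation~\ref{obs:subsurfaces}(3) the loops $\alpha_j$ and $\beta_j$ can be isotoped rel $p_0$ into a subsurface disjoint from $\Sigma_I$; since $s$ has a representative that is the identity outside $\Sigma_I$, this forces $s^*(u)=u$. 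Taking the maximum over all finitely many choices yields an absolute constant $C_2$ with $\|s^*(u)\|_\omega\le C_2$ for every $u\in\omega^{\pm 1}$ and $s\in S^{\pm 1}$. Then writing $\alpha=u_1\cdots u_k$ with $k=\|\alpha\|_\omega$ and applying $s^*$ termwise gives $\|s^*(\alpha)\|_\omega\le\sum_{i=1}^k\|s^*(u_i)\|_\omega\le C_2\|\alpha\|_\omega$.

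For (a), write $s=f_I\tilde s f_I^{-1}$, $t=f_J\tilde t f_J^{-1}$ and set $K=I\cup J$, so $|K|\le 6$. Letting $J_I,J_J\subseteq[|K|]$ be the subsets encoding the positions of $I,J$ inside $K$, Observation~\ref{obs:iso} gives $f_I=f_{K_{J_I}}$ and agreement with $f_K f_{J_I}$ on $\Sigma_{[3]}$, which rewrites $s=f_K s' f_K^{-1}$ and $t=f_K t' f_K^{-1}$ with $s',t'\in\calI(\Sigma_{[|K|]})$ drawn from a finite set depending only on $\tilde s,\tilde t,J_I,J_J$. Thus $w(s,t)=f_K\,w(s',t')\,f_K^{-1}$, and since $\calK$ is normal in $\Mod(\Sigma)$ we get $w(s',t')\in\calK\cap\Mod_{[|K|]}=\calK(\Sigma_{[|K|]})$. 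For each $k\in\{3,4,5,6\}$, fix once and for all a finite generating set $\{T_{\gamma_1},\ldots,T_{\gamma_{N_k}}\}$ for $\calK(\Sigma_{[k]})$ consisting of Dehn twists about separating curves $\gamma_l\in\pi_1(\Sigma_{[k]},p_0)$. Since $w$ is fixed and $w(s',t')$ ranges over a finite set (as $\tilde s,\tilde t,J_I,J_J$ do), the element $w(s',t')$ has length at most some $C_w'$ in these generators. Conjugating by $f_K$ and using $f_K^*(\alpha_j)=\alpha_{i_j}$, $f_K^*(\beta_j)=\beta_{i_j}$ for $j\le|K|$, we obtain an expression $w(s,t)=\prod_i T_{f_K^*(\gamma_{l_i})}^{\pm 1}$; each $f_K^*(\gamma_{l_i})$ represents a separating curve in $\Sigma_K\subseteq\Sigma$ whose $\omega$-length equals $\|\gamma_{l_i}\|_{\omega_{|K|}}$, which is bounded by an absolute constant $C$ depending only on the chosen generating sets. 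Hence $w(s,t)\in\langle T_{sc}(C_w)\rangle$ for $C_w$ depending only on $w$.

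The main obstacle is the geometric claim in part (b) for $j\notin I$: that the fixed loops $\alpha_j,\beta_j$ can be isotoped rel $p_0$ into $\Sigma\setminus\Sigma_I$. This requires some care because condition (iv) in the definition of $\Sigma_I$ forces all the subsurfaces $\Sigma_I$ to share a fixed interval on $\partial\Sigma$ containing $p_0$, so one cannot literally make $\Sigma_{\{j\}}$ and $\Sigma_I$ disjoint; at worst one must absorb a bounded conjugation into $s^*(u)$, which still yields the desired absolute bound on $\|s^*(u)\|_\omega$. A secondary point requiring justification is the identification $\calK\cap\Mod_{[|K|]}=\calK(\Sigma_{[|K|]})$ used in (a), which follows from the fact that a non-boundary-parallel separating curve on $\Sigma_{[|K|]}$ is also separating in $\Sigma$, together with the standard subsurface-injection property for the Johnson kernel.
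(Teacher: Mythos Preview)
Your argument for (a) is essentially the paper's, but contains one genuine error: you ``fix once and for all a finite generating set $\{T_{\gamma_1},\ldots,T_{\gamma_{N_k}}\}$ for $\calK(\Sigma_{[k]})$'' for each $k\in\{3,4,5,6\}$. Finite generation of $\calK_k^1$ is only known for $k\ge 4$, and the case $k=3$ does arise (namely when $I=J$). The paper avoids this: since $w$ is fixed and $(s',t')$ ranges over a finite set, there are only finitely many elements $w(s',t')\in\calK(\Sigma_{[l]})$ under consideration; each is \emph{some} finite product of separating Dehn twists $T_\gamma$ with $\gamma\in\pi_1(\Sigma_{[l]},p_0)$, and one takes $C_w$ to be the maximum of $\|\gamma\|_{\omega_l}$ over the twists appearing in those finitely many expressions. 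No finite generating set for $\calK(\Sigma_{[l]})$ is needed. (The identification $\calK\cap\Mod_{[l]}=\calK(\Sigma_{[l]})$ is handled in the paper by Claim~\ref{claim:Church}, via $\gamma_3\pi\cap\pi_1(\Sigma_{[l]})=\gamma_3\pi_1(\Sigma_{[l]})$ for a free factor, rather than through separating curves.)

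Your argument for (b) diverges from the paper's in the case $j\notin I$, and the obstacle you flag is real. The claim that $\alpha_j$ can be isotoped rel $p_0$ off $\Sigma_I$ amounts to asking whether $\alpha_j\in\pi_1(\overline{\Sigma\setminus\Sigma_I},p_0)$; while the latter is certainly \emph{a} free complement of $\langle\alpha_i,\beta_i:i\in I\rangle$ in $\pi_1(\Sigma,p_0)$, there is no a priori reason it equals $\langle\alpha_j,\beta_j:j\notin I\rangle$, as this depends on the specific embedding of $\Sigma_I$, which the paper does not pin down. Your fallback of ``absorbing a bounded conjugation'' provides no mechanism for the conjugator to have $\omega$-length bounded independently of $n$. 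The paper sidesteps the issue by reusing the conjugation trick from (a): set $L=I\cup\{j\}$ with $|L|\le 4$, write $s=f_L s' f_L^{-1}$ with $s'\in S_{[|L|]}$ and $\alpha_j=f_L^*(\alpha_m)$ for some $m\in[|L|]$, so that $s^*(\alpha_j)=f_L^*\bigl((s')^*(\alpha_m)\bigr)$. Since $s'$ and $m$ range over fixed finite sets independent of $n$, and $f_L^*$ maps $\omega_{|L|}$ into $\omega$, one gets $\|s^*(\alpha_j)\|_\omega\le\|(s')^*(\alpha_m)\|_{\omega_{|L|}}\le C_2$ with no disjointness claim needed.
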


Before proving Lemma~\ref{lem:expansion}, we establish a simple auxiliary result.

\begin{Claim} 
\label{claim:Church}
For every $I\subseteq [n]$ we have $\calK(\Sigma_I)=\calK\cap \Mod_I$.
\end{Claim}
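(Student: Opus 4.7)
The plan is to prove the two inclusions separately, the first being essentially immediate from the topological definition of $\calK$ as generated by Dehn twists about separating simple closed curves, and the second reducing to a purely algebraic statement about lower central quotients of $\pi_1(\Sigma_I)$ and $\pi_1(\Sigma)$.

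For the inclusion $\calK(\Sigma_I) \subseteq \calK \cap \Mod_I$, I use that $\calK(\Sigma_I)$ is generated by Dehn twists $T_\gamma$ about simple closed curves $\gamma$ separating in $\Sigma_I$. Since $\Sigma_I$ has a single boundary component $\partial \Sigma_I$ which is disjoint from $\gamma$, the set $\partial \Sigma_I$ lies entirely in one component of $\Sigma_I \setminus \gamma$, say $A_1$, with $A_2$ being the other. In $\Sigma$, the complement $\Sigma \setminus \Sigma_I$ is attached to $\Sigma_I$ along $\partial \Sigma_I \subseteq A_1$, so $\Sigma \setminus \gamma = A_2 \sqcup (A_1 \cup (\Sigma \setminus \Sigma_I))$, whence $\gamma$ is separating in $\Sigma$ and $T_\gamma \in \calK$. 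Combined with the tautological $\calK(\Sigma_I) \subseteq \Mod_I$, this direction is complete.

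For the reverse inclusion $\calK \cap \Mod_I \subseteq \calK(\Sigma_I)$, the key step is to establish that the natural map $\pi_1(\Sigma_I, p_0) \hookrightarrow \pi_1(\Sigma, p_0)$ induces an \emph{injection} $\pi_1(\Sigma_I)/\gamma_3 \pi_1(\Sigma_I) \hookrightarrow \pi_1(\Sigma)/\gamma_3 \pi_1(\Sigma)$. To this end I first identify $\pi_1(\Sigma_I, p_0)$ with the subgroup $H_I = \langle \alpha_i, \beta_i : i \in I \rangle$ of $\pi_1(\Sigma, p_0)$. By property (i) of the subsurfaces $\Sigma_I$, the loops $\alpha_i, \beta_i$ for $i \in I$ lie in $\Sigma_I$, so $H_I \subseteq \pi_1(\Sigma_I)$; moreover $H_I$ is freely generated by these $2|I|$ elements, which form part of the basis $\{\alpha_j, \beta_j\}_{j=1}^{n}$ of $\pi_1(\Sigma)$. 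Since $\Sigma_I$ has genus $|I|$ and one boundary component, $\pi_1(\Sigma_I)$ is itself free of rank $2|I|$. Assuming $|I| \geq 1$ (the case $I = \emptyset$ being trivial), the Nielsen--Schreier formula applied to the rank-$2|I|$ subgroup $H_I$ of the rank-$2|I|$ free group $\pi_1(\Sigma_I)$ forces $[\pi_1(\Sigma_I) : H_I] = 1$, so $\pi_1(\Sigma_I) = H_I$. Thus $\pi_1(\Sigma_I)$ is a free factor of $\pi_1(\Sigma)$ with complement $\langle \alpha_j, \beta_j : j \notin I \rangle$, and the corresponding retraction $\pi_1(\Sigma) \to \pi_1(\Sigma_I)$ descends to a retraction modulo $\gamma_3$, yielding the desired injectivity.

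Finally, given $f \in \calK \cap \Mod_I$, I choose a representative supported on $\Sigma_I$. This representative fixes $\Sigma \setminus \Sigma_I$ pointwise and fixes $p_0$ (since $p_0 \in \partial \Sigma$), hence it preserves $\pi_1(\Sigma_I, p_0)$ as a subgroup of $\pi_1(\Sigma, p_0)$, and the restricted action agrees with the action of $f$ viewed in $\Mod(\Sigma_I)$. Since $f \in \calK$, this action is trivial modulo $\gamma_3 \pi_1(\Sigma)$; the injectivity above then forces it to be trivial modulo $\gamma_3 \pi_1(\Sigma_I)$, i.e.\ $f \in \calK(\Sigma_I)$. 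The main obstacle is the identification $\pi_1(\Sigma_I) = H_I$: the Nielsen--Schreier argument is clean once one grants that the inclusion $\pi_1(\Sigma_I) \hookrightarrow \pi_1(\Sigma)$ is injective and preserves the rank, both of which are standard consequences of the construction of the subsurfaces $\Sigma_I$.
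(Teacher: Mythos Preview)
Your overall strategy matches the paper's: both reduce the claim to the algebraic fact that $\gamma_3\pi_1(\Sigma_I)=\gamma_3\pi_1(\Sigma)\cap\pi_1(\Sigma_I)$, which follows because $\pi_1(\Sigma_I)$ is a free factor (hence a retract) of $\pi_1(\Sigma)$. Your topological argument for the inclusion $\calK(\Sigma_I)\subseteq\calK$ via separating curves is a pleasant alternative to the paper's uniform algebraic treatment, though the paper's route is shorter since it handles both inclusions at once by directly comparing the two kernels.

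There is, however, a genuine gap in your justification of $\pi_1(\Sigma_I)=H_I$. The Nielsen--Schreier index formula only applies to subgroups of \emph{finite} index: a free group of rank $r\geq 2$ contains many subgroups of rank $r$ and infinite index (for instance $\langle a,\,bab^{-1}\rangle$ inside $F_2=\langle a,b\rangle$). So knowing that $H_I$ and $\pi_1(\Sigma_I)$ are both free of rank $2|I|$ does not by itself force $H_I=\pi_1(\Sigma_I)$. The correct argument is simply that, by the construction of $\Sigma_I$ (properties (i)--(iii): it is a genus-$|I|$ subsurface containing the loops $\alpha_i,\beta_i$ for $i\in I$ with $\prod_{i\in I}[\alpha_i,\beta_i]$ representing $\partial\Sigma_I$), the set $\{\alpha_i,\beta_i:i\in I\}$ is a natural basis of $\pi_1(\Sigma_I,p_0)$ in the sense of the paper. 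The paper takes this as immediate from the construction; once you grant it, the free-factor/retract argument goes through exactly as you wrote.
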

\begin{proof} Let $\pi=\pi_1(\Sigma,p_0)$ and $\pi_I=\pi_1(\Sigma_I,p_0)$.
Using Theorem~\ref{thm:Zieschang}, we can identify $\Mod(\Sigma)$ and $\Mod_I=\Mod(\Sigma_I)$ with subgroups
of $\Aut(\pi)$ and $\Aut(\pi_I)$, respectively. Under this identification, $\calK(\Sigma_I)$ consists of 
all elements of $\Aut(\pi_I)$ which act trivially modulo $\gamma_3 \pi_I$ and lie in $\Mod_I$, while 
$\calK\cap \Mod_I$ consists of  all elements of $\Aut(\pi_I)$ which act trivially modulo 
$\gamma_3 \pi \cap \pi_I$ and lie in $\Mod_I$.

Thus, proving the equality $\calK(\Sigma_I)=\calK\cap \Mod_I$ reduces to showing that $\gamma_3 \pi_I=\gamma_3 \pi\cap \pi_I$. The latter holds since $\pi$ is free and $\pi_I$ is a free factor of $\pi$ and hence a retract of $\pi$.
\end{proof}
\begin{Remark}\rm Claim~\ref{claim:Church} is a special case of \cite[Theorem~4.6]{Ch}; however, we decided to give a direct proof since we are dealing with a much more specific situation compared to \cite{Ch}.
\end{Remark}

\begin{proof}[Proof of Lemma~\ref{lem:expansion}](a) Take any $s,t\in S$ and a word $w$ such that $w(s,t)\in \calK$.
By definition of $S$, there exist $I,J\subseteq [n]$ with $|I|=|J|=3$ and $u,v\in S_{[3]}$ such that
$s=f_I u f_I^{-1}$ and $t=f_J v f_J^{-1}$. Let $L=I\cup J$ and $l=|L|$ (thus, $l\leq 6$). Then,
using the notations of Observation~\ref{obs:iso}, $I=L_A$ and $J=L_B$
for some $A,B\subseteq [l]\subseteq [6]$. Hence by Observation~\ref{obs:iso},
$s=f_{L_A} u f_{L_A}^{-1}=f_L s'f_L^{-1}$  where $s'=f_A u f_A^{-1}$. Similarly 
$t=f_L t'f_L^{-1}$ where $t'=f_B u f_B^{-1}$.

\vskip .12cm

Note that $s',t'\in S_{[l]}$. Also note that $s'$ and $t'$ are obtained from $s$ and $t$ by conjugation by the same element of $\Mod(\Sigma)$. Since $w(s,t)\in\calK$ and $\calK$ is normal in $\Mod(\Sigma)$, it follows that
$w(s',t')\in \calK \cap \Mod_{[l]}=\calK(\Sigma_{[l]})$ where the last equality holds by Claim~\ref{claim:Church}.

Since $s',t'\in S_{[l]}$ and $l\leq 6$, there are boundedly many possibilities for the pair $(s',t')$. Let us enumerate all such pairs $(s_1',t_1'),\ldots, (s_l',t_l')$ satisfying the additional restriction
$w(s_j',t_j')\in\calK$ (and hence $w(s_j',t_j')\in\calK(\Sigma_{[l]})$). Since $\calK(\Sigma_{[l]})$ is generated
by Dehn twists $T_{\gamma}$ with $\gamma\in \pi_1(\Sigma_{[l]},p_0)$ separating, there exists a constant $C_w$ depending only on $w$ such that each element $w(s_j',t_j')$ (and in particular $w(s',t')$) lies in the subgroup generated by all $T_{\gamma}$ where $\gamma\in \pi_1(\Sigma_{[l]},p_0)$ is separating and $\|\gamma\|_{\omega_l}\leq C_w$. 

Thus we can write $w(s',t')=\prod_{i=1}^{k} T_{\gamma_i}^{\pm 1}$
where each $\gamma_i\in \pi_1(\Sigma_{[l]},p_0)$ is separating with $\|\gamma_i\|_{\omega_l}\leq C_w$. Then
\begin{equation}
\label{eq:cw}
w(s,t)=w(f_L s'f_L^{-1}, f_L t'f_L^{-1})=f_L w(s',t') f_L^{-1}=\prod_{i=1}^{k} f_L T_{\gamma_i}^{\pm 1} f_L^{-1}=
\prod_{i=1}^{k} T_{f_L^*(\gamma_i)}^{\pm 1}.
\end{equation}
Finally, by definition of $f_L$ we have $f_L^*(\omega_l)\subseteq \omega$ and therefore 
for any $\gamma_i$ in \eqref{eq:cw} we have $\|f_L^*(\gamma_i)\|_{\omega}\leq \|\gamma_i\|_{\omega_l}\leq C_w$,
and hence $w(s,t)$ lies in $T_{sc}(C_w)$ by \eqref{eq:cw}.

\vskip .12cm

(b) Take any $1\neq \alpha\in \pi_1(\Sigma,p_0)$, let $k=\|\alpha\|_{\omega}$,
and write $\alpha=\prod\limits_{j=1}^k \lambda_j^{\pm 1}$ with $\lambda_j\in \omega$.
Since $\|s^*(\alpha)\|_{\omega}=\|\prod\limits_{j=1}^k s^*(\lambda_j)^{\pm 1}\|_{\omega}\leq
\sum\limits_{j=1}^k \|s^*(\lambda_j)\|_{\omega}$, to prove (b) it suffices to show that
there exists an absolute constant $C_2$ such that
$$\|s^*(\lambda)\|_{\omega}\leq C_2\mbox{ for all }\lambda\in \omega,\mbox{ that is, for }
\lambda=\alpha_i\mbox{ or }\beta_i.
$$
By symmetry, it suffices
to consider the case $\lambda=\alpha_i$ and $s\in S$. Write $s=f_J v f_J^{-1}$ where $|J|=3$ and $v\in S_{[3]}$. Let
$I=\{i\}$, $L=I\cup J$ and $l=|L|\leq 4$. As in (a) we have $s=f_L s' f_L^{-1}$ where $s'\in S_{[l]}$ and 
$\lambda=f_L^*(\alpha_j)$ for some $j\in [l]$. 

Then $s^*\lambda=(f_L s' f_L^{-1})^*f_L^*(\alpha_j)=f_L^*((s')^* \alpha_j)$. Since
we have boundedly many possibilities for $j$ and $s'$, we have $\|(s')^* \alpha_j\|_{\omega_l}\leq C_2$ for some absolute constant $C_2$, and as in the proof of (a) we have $\|f_L^*((s')^* \alpha_j)\|_{\omega}\leq \|(s')^* \alpha_j\|_{\omega_l}$. Thus, $\|s^*\lambda\|_{\omega}\leq C_2$, as desired.

\end{proof}

\begin{proof}[Proof of Theorem~\ref{thm:main_Torelli2}]
Recall that $S$ is the generating set for $\calI$ from Proposition~\ref{prop:Aestimate}. 
In the proof of Theorem~\ref{thm:main_Torelli} we showed that the Johnson kernel $\calK$ has a generating set consisting of elements of the form $x^y$ where $x=[s,t]$ for some $s,t\in S$ or $x\in S^{(4)}$ (in the notations from that proof) and $y=\prod\limits_{j=1}^{m}s_j$ where each $s_j\in S$ and $m\leq R(N+M)<2Rn^3$ where $R$, $N$ and $M$ are as in Theorem~\ref{thm:main_Torelli}(b). Also recall that every element of $S^{(4)}$
either lies in $S$ or has the form $s^{-1}t$ with $s,t\in S$.

Consider the following three words in the free group on two generators $a,b$: $w_1(a,b)=a$, $w_2(a,b)=a^{-1}b$
and $w_3(a,b)=[a,b]$. Let $C_1$ be the maximum of the constants $C_{w_1},C_{w_2},C_{w_3}$ from the conclusion
of Lemma~\ref{lem:expansion}(a). Now take any $x$ and $y$ as in the previous paragraph. 
By Lemma~\ref{lem:expansion}(a) we can write $x=\prod\limits_{i=1}^k T_{\gamma_i}$ where $\gamma_i\in \pi_1(\Sigma,p_0)$ is separating and $\|\gamma_i\|_{\omega}\leq C_1$. Hence $$x^y=\prod\limits_{i=1}^k T_{\gamma_i}^{\,\prod\limits_{j=1}^m s_j}=
\prod\limits_{i=1}^k T_{(\prod\limits_{j=m}^1 s_j^{-1})^*(\gamma_i)}.$$

By Lemma~\ref{lem:expansion}(b) for each $i$ we have 
$\|(\prod\limits_{j=m}^1 s_j^{-1})^*(\gamma_i)\|_{\omega}\leq C_2^{m}\|\gamma_i\|_{\omega}\leq 
C_2^{2Rn^3} C_1\leq (C_1 C_2)^{2Rn^3}$.
Thus, Theorem~\ref{thm:main_Torelli2} holds with $D=(C_1 C_2)^{2R}$.
\end{proof}

\end{document}